\theoremstyle{plain}
\newtheorem{thm}{Theorem}[section]
\newtheorem{proposition}[thm]{Proposition}
\newtheorem{corollary}[thm]{Corollary}
\newtheorem{definition}[thm]{Definition}
\newtheorem{theorem}[thm]{Theorem}
\newtheorem{lemma}[thm]{Lemma}
\newtheorem{remark}[thm]{Remark}
\newtheorem*{domainL*}{Domain of Laplacian}
\newtheorem*{domain*}{Domain of bi-Laplacian}
\begin{document}
\title[The Cahn-Hilliard equation on manifolds with conical singularities]{Smoothness and long time existence for solutions of the Cahn-Hilliard equation on manifolds with conical singularities}

\author{Pedro T. P. Lopes}
\address{Instituto de Matem\'atica e Estat\'istica, Universidade de S\~ao Paulo, Rua do Mat\~ao 1010, 05508-090, S\~ao Paulo, SP, Brazil}
\email{pplopes@ime.usp.br}
\thanks{Pedro T. P. Lopes was partially supported by FAPESP 2016/07016-8 and FAPESP 2019/15200-1}

\author{Nikolaos Roidos}
\address{Department of Mathematics, University of Patras, 26504 Rio Patras, Greece}
\email{roidos@math.upatras.gr}
\thanks{Nikolaos Roidos was supported by Deutsche Forschungsgemeinschaft, grant SCHR 319/9-1}

\subjclass[2010]{35K58; 35K65; 35K90; 35K91; 35R01}

\date{\today}
\begin{abstract} 
We consider the Cahn-Hilliard equation on manifolds with conical singularities. For appropriate initial data, we show that the solution exists in the maximal $L^q$-regularity space for all times and becomes instantaneously smooth in space and time, where the maximal $L^q$-regularity is obtained in the sense of Mellin-Sobolev spaces. Moreover, we provide precise information concerning the asymptotic behavior of the solution close to the conical tips in terms of the local geometry.
\end{abstract}

\keywords{Semilinear parabolic equations, maximal regularity, Cahn-Hilliard equation, manifolds with conical singularities.}

\maketitle

\section{Introduction}

Let $\mathcal{B}$ be a smooth compact $(n+1)$-dimensional manifold, $n\geq 1$, with closed (i.e. compact without boundary) possibly disconnected smooth boundary $\partial\mathcal{B}$ of dimension $n$. We endow $\mathcal{B}$ with a degenerate Riemannian metric $g$ which in a collar neighborhood $[0,1)\times\partial\mathcal{B}$ of the boundary is of the form
$$
g=dx^{2}+x^2h(x),
$$
where $h(x)$, $x\in[0,1)$, is a family of Riemannian metrics on $\partial\mathcal{B}$ that is smooth and does not degenerate up to $x=0$. We call $\mathbb{B}=(\mathcal{B},g)$ a {\em manifold with conical singularities} or {\em conic manifold}; the singularities, i.e. the conical tips, correspond to the boundary $\{0\}\times \partial\mathcal{B}$ of $\mathcal{B}$. 

We consider the following semilinear equation on $\mathbb{B}$, namely 
\begin{eqnarray}\label{CH1}
u'(t)+\Delta^{2}u(t)&=&\Delta(u^{3}(t)-u(t)), \quad t\in(0,T),\\\label{CH2}
u(0)&=&u_{0},
\end{eqnarray}
where $\Delta=\Delta_{g}$ is the Laplacian on $\mathbb{B}$, $T>0$ and $u_{0}$ is an appropriate initial data. The above problem, called {\em the Cahn-Hilliard equation}, is a phase-field or diffuse interface equation which models the phase separation of a binary mixture. The scalar function $u$ corresponds to the difference of the components; the sets where $u=\pm 1$ correspond to domains of pure phases. For global existence and the properties of the solutions on smooth domains we refer to \cite{CaMu}, \cite{CaHi}, \cite{EZ}, \cite{Temam} and \cite{vonwahl}.

When the Laplacian $\Delta$ on $\mathbb{B}$ is restricted to the collar part $(0,1)\times\partial\mathcal{B}$ it takes the degenerate form
$$
\Delta=\frac{1}{x^{2}}\Big((x\partial_{x})^{2}+(n-1+\frac{x\partial_{x}\det[h(x)]}{2\det[h(x)]})(x\partial_{x})+\Delta_{h(x)}\Big),
$$
where $\Delta_{h(x)}$ is the Laplacian on $\partial\mathcal{B}$ induced by the metric $h(x)$. We regard $\Delta$ as an element in the class of cone differential operators or Fuchs type operators, i.e. the naturally appearing degenerate differential operators on $\mathbb{B}$. These operators act naturally on the scales of weighted Mellin-Sobolev spaces $\mathcal{H}_{p}^{s,\gamma}(\mathbb{B})$, $p\in(1,\infty)$, $s,\gamma\in \mathbb{R}$, see Definition \ref{mellinsobolevspaces}. Due to the degeneracy, when $\Delta$ is considered as an unbounded operator in $\mathcal{H}_{p}^{s,\gamma}(\mathbb{B})$ it admits several closed extensions; all of them differ by a finite dimensional space that is determined explicitly by the family $h(\cdot)$, see Section \ref{realizations} for details.

We focus on a non-trivial closed extension of $\Delta$ having domain $\mathcal{H}_{p}^{s+2,\gamma+2}(\mathbb{B})\oplus\mathbb{C}_{\omega}$, where $\mathbb{C}_{\omega}$ is a certain subspace of the space of smooth functions on $\mathbb{B}$; it consists of functions that are locally constant close to the singularities, see Definition \ref{constfunt}. In this situation, the domain of the associated bi-Laplacian is as in \eqref{bilapintro}, where $\mathcal{E}_{\Delta^2,\gamma}$ is a finite dimensional $s$-independent space consisting of linear combinations of functions that are smooth in the interior of $\mathbb{B}$ and, in local coordinates $(x,y)\in (0,1)\times\partial\mathcal{B}$ on the collar part, they are of the form $c(y)x^{-\rho}\log^{k}(x)$, where $c\in C^{\infty}(\partial\mathbb{B})$, $\partial\mathbb{B}=(\partial\mathcal{B},h(0))$, $\rho\in\{z\in\mathbb{C}\, |\, \mathrm{Re}(z)\in [\frac{n-7}{2}-\gamma,\frac{n-3}{2}-\gamma)\}$ and $k\in\{0,1,2,3\}$. The powers $\rho$, $k$, and in general the subspace $\mathcal{E}_{\Delta^2,\gamma}$, are determined explicitly by the family $h(\cdot)$ and the weight $\gamma$.

 Based on bounded imaginary powers results for the above described Laplacian, together with the theory of maximal $L^{q}$-regularity, we show the following result concerning \eqref{CH1}-\eqref{CH2}. For $\xi\in(0,1)$ and $\eta\in(1,\infty)$, denote by $(\cdot,\cdot)_{\xi,\eta}$ the real interpolation functor of type $\xi$ and exponent $\eta$.
 
\begin{thm}\label{ThLTS} Let $p\in(1,\infty)$, $s\ge0$, $s+2>\frac{n+1}{p}$,
\begin{gather}\label{gamma}
\frac{n-3}{2}<\gamma<\min\Big\{-1+\sqrt{\Big(\frac{n-1}{2}\Big)^{2}-\lambda_{1}} ,\frac{n+1}2\Big\} \quad \text{and}\quad \Big\{\begin{array}{lll} q>2 &\text{if}& p\ne2\\ q\geq2 &\text{if} & p=2 ,\end{array}
\end{gather}
where $\lambda_{1}$ is the greatest non-zero eigenvalue of the boundary Laplacian $\Delta_{h(0)}$ on $(\partial\mathcal{B},h(0))$. Moreover, denote by $\mathbb{C}_{\omega}$ the space of smooth functions on $\mathbb{B}$ that are locally constant close to the singularities, see Definition \ref{constfunt}. Then, for each
\begin{gather}\label{u0reg}
u_{0}\in(\mathcal{D}(\underline{\Delta}_{s}^{2}),\mathcal{H}_{p}^{s,\gamma}(\mathbb{B}))_{\frac{1}{q},q}\hookleftarrow \bigcup_{\varepsilon>0}\mathcal{H}_{p}^{s+4-\frac{4}{q}+\varepsilon,\gamma+4-\frac{4}{q}+\varepsilon}(\mathbb{B})\oplus\mathbb{C}_{\omega}
\end{gather}
there exists a $T>0$ and a unique 
\begin{gather}\label{uexists}
u \in W^{1,q}(0,T;\mathcal{H}_{p}^{s,\gamma}(\mathbb{B}))\cap L^{q}(0,T;\mathcal{D}(\underline{\Delta}_{s}^{2}))
\end{gather}
solving \eqref{CH1}-\eqref{CH2} on $[0,T]\times\mathbb{B}$, where the bi-Laplacian domain $ \mathcal{D}(\underline{\Delta}_{s}^{2})$ satisfies
\begin{gather}\label{bilapintro}
\mathcal{H}_{p}^{s+4,\gamma+4}(\mathbb{B})\oplus\mathbb{C}_{\omega}\oplus\mathcal{E}_{\Delta^{2},\gamma} \hookrightarrow \mathcal{D}(\underline{\Delta}_{s}^{2})\hookrightarrow \bigcap_{\varepsilon>0}\mathcal{H}_{p}^{s+4,\gamma+4-\varepsilon}(\mathbb{B})\oplus\mathbb{C}_{\omega}\oplus\mathcal{E}_{\Delta^{2},\gamma}
\end{gather}
for certain $s$-independent finite dimensional space $\mathcal{E}_{\Delta^{2},\gamma}${\em;} both $\mathcal{D}(\underline{\Delta}_{s}^{2})$ and $\mathcal{E}_{\Delta^{2},\gamma}$ described in Section \ref{realizations}, are determined by the family of metrics $h(\cdot)$ and the weight $\gamma$. Furthermore
\begin{gather}\label{extrareg}
u\in \bigcap_{s\ge0}C^{\infty}((0,T);\mathcal{D}(\underline{\Delta}_{s}^{2}))\hookrightarrow \bigcap_{\varepsilon>0} C^{\infty}((0,T);\mathcal{H}_{p}^{\infty,\gamma+4-\varepsilon}(\mathbb{B})\oplus\mathbb{C}_{\omega}\oplus\mathcal{E}_{\Delta^{2},\gamma})
\end{gather}
and
\begin{equation}\label{contsolu}
u \in \bigg\{\begin{array}{lll}
\bigcap_{\varepsilon>0}C([0,T);\mathcal{H}_{p}^{s+4-\frac{4}{q}-\varepsilon,\gamma+2+\delta_{0}(1-\frac{2}{q})}(\mathbb{B})\oplus\mathbb{C}_{\omega}) &\text{if} & q>2\\
C([0,T);\mathcal{H}_{2}^{s+2,\gamma+2}(\mathbb{B})\oplus\mathbb{C}_{\omega}) & \text{if} & p=q=2
\end{array}\bigg\} \hookrightarrow C([0,T);C(\mathbb{B})),
\end{equation}
where $\delta_{0}>0$ is fixed and is determined by $h(\cdot)$ and $\gamma$. In particular, when $h(\cdot)=h$ is constant, then $\mathcal{D}(\underline{\Delta}_{s}^{2})$, $\mathcal{E}_{\Delta^{2},\gamma}$ and $\delta_{0}$ are determined by $\gamma$ and the spectrum $\sigma(\Delta_{h})$.\\
If besides the above assumptions we assume that $n\in\{1,2\}$, $\gamma<-\frac{1}{2}$ for $n=1$ and $\gamma<-\frac{1}{4}$ for $n=2$, then the above $T>0$ can be taken arbitrary large, that is, the solutions are globally defined. 
\end{thm}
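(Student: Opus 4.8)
The plan is to treat \eqref{CH1}--\eqref{CH2} as an abstract autonomous semilinear problem
\[
u'(t)+\underline{\Delta}_{s}^{2}u(t)=f(u(t)),\qquad u(0)=u_{0},\qquad f(v)=\Delta(v^{3}-v),
\]
in the Banach couple $X_{0}=\mathcal{H}_{p}^{s,\gamma}(\mathbb{B})$, $X_{1}=\mathcal{D}(\underline{\Delta}_{s}^{2})$. The first ingredient is that $c+\underline{\Delta}_{s}^{2}$, for a sufficiently large $c>0$, has bounded imaginary powers of power angle strictly less than $\pi/2$ — the BIP statement for the cone bi-Laplacian quoted above, itself deduced from the corresponding result for the cone Laplacian $c+\underline{\Delta}_{s}$ — so, the Mellin--Sobolev spaces being UMD, the Dore--Venni / Pr\"uss--Sohr theorem gives maximal $L^{q}$-regularity on $(0,T)$ for every $q\in(1,\infty)$ and every finite $T$ (the shift being absorbed into the right-hand side, which stays of the same type). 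The natural trace space for the initial data is then $X_{1-1/q,q}=(X_{1},X_{0})_{1/q,q}$, and the embedding displayed in \eqref{u0reg} follows by interpolating the left-hand inclusion of the two-sided domain description \eqref{bilapintro} against $X_{0}$, using that $\mathbb{C}_{\omega}$ and $\mathcal{E}_{\Delta^{2},\gamma}$ are finite dimensional and $s$-independent.

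Next I would control the nonlinearity. Under \eqref{gamma} the left inequality is equivalent to $\gamma+2>\tfrac{n+1}{2}$, which together with $s+2>\tfrac{n+1}{p}$ makes $\mathcal{D}(\underline{\Delta}_{s})=\mathcal{H}_{p}^{s+2,\gamma+2}(\mathbb{B})\oplus\mathbb{C}_{\omega}$ continuously embed into $C(\mathbb{B})$ and, by the multiplicativity of the Mellin--Sobolev scale in that range together with the fact that $\mathbb{C}_{\omega}$ is a subalgebra acting on every $\mathcal{H}_{p}^{s,\gamma}$, become a Banach algebra. Hence $v\mapsto v^{3}-v$ is smooth from $\mathcal{D}(\underline{\Delta}_{s})$ to itself, and composing with the bounded map $\Delta\colon\mathcal{D}(\underline{\Delta}_{s})\to X_{0}$ shows $f\colon\mathcal{D}(\underline{\Delta}_{s})\to X_{0}$ is $C^{1}$ with locally Lipschitz derivative. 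Since $q\ge2$ makes $X_{1-1/q,q}$ embed into the intermediate space $\mathcal{D}(\underline{\Delta}_{s})$ on which $f$ is defined, the hypotheses of the standard maximal-regularity existence theorem for semilinear equations (Cl\'ement--Li, respectively Pr\"uss) are satisfied; this produces a maximal existence time $T>0$, a unique solution in the class \eqref{uexists}, continuous dependence on $u_{0}$, and the blow-up alternative: if $T<\infty$ then $\limsup_{t\to T^{-}}\|u(t)\|_{X_{1-1/q,q}}=\infty$. The smoothing \eqref{extrareg} then comes from the autonomy of the equation via the parameter (translation) trick of Angenent and Escher--Pr\"uss--Simonett: since $f$ is smooth, $u\in C^{\infty}((0,T);X_{1-1/q,q})$, and re-entering the equation together with the elliptic regularity of $\underline{\Delta}_{s'}^{2}$ in the Mellin--Sobolev scale raises $s'$, yielding $u\in C^{\infty}((0,T);\mathcal{D}(\underline{\Delta}_{s'}^{2}))$ for every $s'\ge0$; the conical asymptotics and the statements \eqref{contsolu} and \eqref{extrareg} are then read off from the explicit description of $\mathcal{D}(\underline{\Delta}_{s'}^{2})$ in Section \ref{realizations} (with $\delta_{0}$ coming from the asymptotic summand $\mathcal{E}_{\Delta^{2},\gamma}$) and from the mixed derivative / trace embeddings of the maximal regularity space.

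For the last assertion — global existence when $n\in\{1,2\}$ and $\gamma\le0$ — I would work in the Hilbert case $p=q=2$, admissible under the extra hypotheses, and establish a bound on $\|u(t)\|_{X_{1/2,2}}$ uniform for $t$ in the existence interval; by the blow-up alternative this forces $T=+\infty$. The two structural features of the Cahn--Hilliard flow are available: conservation of mass, $\int_{\mathbb{B}}u(t)\,dg=\int_{\mathbb{B}}u_{0}\,dg$, and monotonicity of the Ginzburg--Landau energy
\[
E(u)=\int_{\mathbb{B}}\Big(\tfrac12|\nabla u|^{2}+\tfrac14(u^{2}-1)^{2}\Big)dg,\qquad \tfrac{d}{dt}E(u(t))=-\big\|\nabla(\Delta u-u^{3}+u)(t)\big\|_{L^{2}(\mathbb{B})}^{2}\le0 .
\]
Both identities must be justified for the particular closed extension at hand: here $\gamma\le0$ guarantees that the constant function $1$ lies in the relevant dual space, so the mass pairing is finite, and that the integrations by parts on $\mathbb{B}$ produce no residual term at the conical tips $\{0\}\times\partial\mathcal{B}$. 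Combining the energy bound, the conserved mass and a Poincar\'e-type inequality on $\mathbb{B}$ valid for $\gamma\le0$ yields $\sup_{t}\big(\|u(t)\|_{\mathcal{H}_{2}^{1,1}(\mathbb{B})}+\|u(t)\|_{L^{4}(\mathbb{B})}\big)<\infty$; since the ambient dimension $n+1\in\{2,3\}$ makes the cubic term energy-subcritical, a Gagliardo--Nirenberg / parabolic bootstrap in the Mellin--Sobolev scale upgrades these low-order bounds to the required uniform control of $\|u(t)\|_{X_{1/2,2}}$.

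The step I expect to be the main obstacle is exactly this global a priori estimate on a manifold with conical singularities: one must (i) make the energy identity and mass conservation rigorous for solutions of the \emph{specific} realization $\underline{\Delta}_{s}^{2}$, verifying that no boundary contribution survives at the tips — this is precisely where the hypotheses $\gamma\le0$ and the choice of the extension with $\mathbb{C}_{\omega}$ in its domain enter; (ii) prove the Poincar\'e/Hardy inequalities with the correct Mellin weights on $\mathbb{B}$; and (iii) run the dimension-restricted ($n\in\{1,2\}$) subcritical bootstrap through the Mellin--Sobolev scale, where the multiplication and interpolation estimates are more delicate than on a closed manifold and where the conical asymptotic terms in $\mathcal{D}(\underline{\Delta}_{s}^{2})$ and $\mathcal{E}_{\Delta^{2},\gamma}$ have to be tracked. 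By contrast, once the BIP result and the Banach-algebra property are available, the local existence and the smoothing parts are essentially mechanical.
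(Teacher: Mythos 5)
Your outline of the local theory matches the paper's route: BIP for $c_0+\underline{\Delta}_{s}^{2}$ plus Dore--Venni gives maximal $L^{q}$-regularity on the UMD space $\mathcal{H}_{p}^{s,\gamma}(\mathbb{B})$, the condition $\gamma>\frac{n-3}{2}$ (equivalently $\gamma+2>\frac{n+1}{2}$) together with $s+2>\frac{n+1}{p}$ makes the relevant trace space a Banach algebra so that $v\mapsto\Delta(v^{3}-v)$ is smooth into $X_{0}$, Cl\'ement--Li gives \eqref{uexists}, and the Angenent/Pr\"uss--Simonett translation trick combined with a Banach-scale bootstrap gives \eqref{extrareg}. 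Two remarks on this part. First, the restriction $q>2$ (resp. $q\geq 2$ for $p=2$) is not cosmetic: it is exactly what makes $(\mathcal{D}(\underline{\Delta}_{s}^{2}),\mathcal{H}_{p}^{s,\gamma}(\mathbb{B}))_{\frac1q,q}$ land in a Banach algebra of the form $\mathcal{H}_{p}^{\sigma,\gamma+2+\delta_{0}(1-\frac2q)}(\mathbb{B})\oplus\mathbb{C}_{\omega}$ (Lemma \ref{interfind}); the real-interpolation space loses weight relative to $\gamma+2$ and one must check it still clears $\frac{n+1}{2}$, which your appeal to ``$q\ge2$ makes $X_{1-1/q,q}$ embed into $\mathcal{D}(\underline{\Delta}_{s})$'' glosses over (that embedding is false as stated for $q>2$; only the weaker Lemma \ref{interfind}(i) holds, and it suffices). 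Second, in the scale bootstrap you must justify that the restarted, more regular solution exists on all of $(t_{0},T)$ and not merely for a short time; this is where Corollary \ref{maximalinterval} is invoked with $F(\widetilde u)\in C([t_{0},\widetilde T_{\max}];Y_{0}^{k+1})$. These are repairable, and on the whole this half of your proposal is sound.

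The genuine gap is in the global existence argument, which is the main content of the theorem. You derive (correctly) the a priori bound in $\mathcal{H}_{2}^{1,1}(\mathbb{B})\oplus\mathbb{C}_{\omega}$ from the energy functional --- though mass conservation and a Poincar\'e inequality are unnecessary detours, since $\Phi(u_{0})$ already controls $\|u^{2}-1\|_{L^{2}}$ and hence $\|u\|_{L^{2}}$ --- but the passage from this low-order bound to a uniform bound in the trace space is precisely the step you defer to ``a Gagliardo--Nirenberg / parabolic bootstrap in the Mellin--Sobolev scale,'' and that step is where all the work is. The paper's mechanism is specific: the moment inequality $\|u\|_{\mathcal{D}(A_{0}^{(5+\nu)/6})}\le C\|u\|_{\mathcal{D}(A_{0}^{1/2})}^{2/3}\|u\|_{\mathcal{D}(A_{0}^{(3+\nu)/2})}^{1/3}$ makes $\|u^{3}\|$ \emph{linear} in the high norm (Lemma \ref{firstest}), which is what allows the variation-of-constants formula plus the singular Gr\"onwall inequality to close; this in turn requires identifying fractional power domains with weighted Mellin--Sobolev spaces (Lemma \ref{L56}), commuting powers of $c-\underline{\Delta}_{0}$ with powers of $\underline{\Delta}_{0}^{2}+c$ (Lemma \ref{bipcompare}), and Green's identity for the \emph{specific} extension with $\mathbb{C}_{\omega}$ in its domain (Lemma \ref{green}, nontrivial when $n=1$). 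Moreover, your plan to run this at $p=q=2$ cannot work as stated: the Duhamel bookkeeping forces the solution to live in $\mathcal{D}(A_{0}^{(3+\nu)/2})=\mathcal{D}(B_{0}^{(3+\nu)/4})$, and the trace space $(X_{1},X_{0})_{\frac1q,q}$ embeds there only if $q>\frac{4}{1-\nu}>4$. The paper therefore proves global existence for large $q$ and then needs the separate transfer result (Corollary \ref{indmaxint}), built on the instantaneous smoothing \eqref{extrareg}, to carry it back to all admissible $(s,p,q)$ --- a step absent from your proposal. Since you explicitly flag the a priori estimate as ``the main obstacle'' without resolving it, the proposal does not yet constitute a proof of the global statement.
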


We note that the weight $\gamma$ plays an important role in the proof of global solutions. The fact that it has to be smaller than some constant, $-\frac{1}{2}$ or $-\frac{1}{4}$, is crucial for establishing a priori estimates for the solution, as we will see in the study of the energy functional. 

{\em Space asymptotics}. The above theorem provides information concerning the asymptotic behavior of the solutions close to the singularities as well as its relation with the local geometry. More precisely, from \eqref{extrareg} we can uniquely decompose the solution into three components, namely $u=u_{\mathbb{C}}\oplus u_{\mathcal{E}} \oplus u_{\mathcal{H}}$, where $u_{\mathbb{C}}\in C^{\infty}(0,T;\mathbb{C}_{\omega})$, $u_{\mathcal{E}}\in C^{\infty}(0,T;\mathcal{E}_{\Delta^{2},\gamma})$ and $ u_{\mathcal{H}}\in C^{\infty}(0,T;\mathcal{H}_{p}^{\infty,\gamma+4-\varepsilon})$ for all $\varepsilon>0$. Elements in $C^{\infty}(0,T;\mathbb{C}_{\omega})$ have constant values close to the singularities. Elements in $C^{\infty}(0,T;\mathcal{E}_{\Delta^{2},\gamma})$ admit an asymptotic expansion with respect to the geodesic distance $x$ from the conical tips in terms of real powers of $x$ and integer powers of $\log(x)$; these powers are time independent and are determined explicitly by $h
 (\cdot)$ and $\gamma$, see Section \ref{realizations}. Finally, due to standard embedding properties of Mellin-Sobolev spaces, see e.g. Lemma \ref{propms} (iii), $u_{\mathcal{H}}\in C^{\infty}(0,T;C(\mathbb{B}))$ and, in local coordinates $(x,y)\in [0,1)\times\partial\mathcal{B}$ on the collar part, for each $\beta<\gamma+4-\frac{n+1}{2}$, we have 
$$
|u_{\mathcal{H}}(t,x,y)|\leq Cx^{\beta}, \quad t\in (0,T),
$$
where the constant $C>0$ is determined explicitly by $u_{0}$, $\gamma$, $\beta$ and $t$. Therefore, a picture of how the geometry is reflected to the evolution is provided. 

{\em Related work}. In \cite{RS1} the problem \eqref{CH1}-\eqref{CH2} was studied on manifolds with straight conical tips and it was shown existence, uniqueness and maximal $L^{q}$-regularity for solutions on weighted $L^{p}$-spaces. Later on, in \cite{RS2} this result was extended to possibly warped cones and to higher order Mellin-Sobolev spaces. In both cases the solutions were obtained for short times and their provided regularity was dependent on the regularity of the initial data. Moreover, in \cite{Ve} the problem \eqref{CH1}-\eqref{CH2} was considered on manifolds with edge type singularities and short time results where shown in terms of incomplete edge-H\"older spaces.

In the present work, we show optimal results by generalizing the results of \cite{RS2} and \cite{RS1} from the following two aspects: {\em 1st} from the existence point of view by showing that the solutions are global in time when $\dim(\mathbb{B})=2$ or $3$ and {\em 2nd} from the regularity point of view by showing instantaneously smoothing for solutions in space and time. Moreover, we provide sharper space asymptotics of the solutions through the domain of the bi-Laplacian, which indicates a stronger influence of the underline local geometry of the conical tips to the evolution.

The starting point in our consideration is a gradient estimate obtained by the energy functional for the Cahn-Hilliard equation. Next, a combination of the moment inequality and the singular Gr\"onwall's inequality provides a uniform bound of the short time solution with respect to the norm associated with the domain of certain fractional power of the Laplacian. Then, we apply standard long time existence theory for maximal $L^q$-regular solutions of semilinear equations. Smoothness in space and time is obtained by a Banach scale increasing regularity argument that relies on a theorem of Pr\"uss and Simonett, based on Angenent \cite{Angenent}. We extensively use results obtained by means of the cone pseudodifferential calculus (see e.g. \cite{Le} and \cite{Schu}), such as the description of the domains of $\mathbb{B}$-elliptic cone differential operators (see \cite{GKM}, \cite{SS} and \cite{Sei}), and the boundedness of the imaginary powers of our underline Laplacian (see \cite{CSS2}, \cite{Lo}, \cite{Ro2} and \cite{SS1}).

\subsection*{Notation}

In this paper, whenever $E_{0}$ and $E_{1}$ are Banach spaces, we denote by $\mathcal{L}(E_{0},E_{1})$ the set of all bounded operators from $E_{0}$ to $E_{1}$ and write $\mathcal{L}(E_{0})=\mathcal{L}(E_{0},E_{1})$ if $E_{0}=E_{1}$. Similarly, we denote by $\mathcal{L}_{s}(E_{0},E_{1})$ the space $\mathcal{L}(E_{0},E_{1})$ equipped with strong operator topology. We also denote by $E_{0}'$ the dual space of $E_{0}$. All Banach spaces are complex. However, whenever we are considering a solution of \eqref{CH1}-\eqref{CH2}, we assume that the solution is real valued, which is equivalent to say that the initial condition $u_{0}$ is real valued. This is not so relevant in order to obtain short time solutions, but it is important for the study of the energy functional and of global existence of solutions. The set $\mathbb{N}$ always indicates the non-negative integers $\{0,1,2,...\}$ and the symbol $\oplus$ is used for the sum of Banach spaces, which is not always direct.

\section{Maximal $L^{q}$-regularity theory for semilinear parabolic problems}

Our main tool for studying the Cahn-Hilliard equation on manifolds with conical singularities is the theory of maximal $L^q$-regularity. A detailed account of the theory can be found in \cite{PS}. In this section, we recall some of the main results we shall need.

\subsection{Existence and uniqueness} Let $X_{1}\overset{d}{\hookrightarrow} X_{0}$ be a continuously and densely injected complex Banach couple.

\begin{definition}[Sectorial operators]
Let $\mathcal{P}(K,\theta)$, $\theta\in[0,\pi)$, $K\geq1$, be the class of all closed densely defined linear operators $A$ in $X_{0}$ such that 
\begin{gather}\label{sectrest}
S_{\theta}=\{\lambda\in\mathbb{C}\,|\, |\arg(\lambda)|\leq\theta\}\cup\{0\}\subset\rho{(-A)} \quad \mbox{and} \quad (1+|\lambda|)\|(A+\lambda)^{-1}\|_{\mathcal{L}(X_{0})}\leq K \quad \text{when} \quad \lambda\in S_{\theta}.
\end{gather}
The elements in $\mathcal{P}(\theta)=\cup_{K\geq1}\mathcal{P}(K,\theta)$ are called {\em (invertible) sectorial operators of angle $\theta$}.
\end{definition}

Note that if $A\in \mathcal{P}(\theta)$ then the Property \eqref{sectrest} can be extended from $S_{\theta}$ to the set $\{\lambda-\delta\in\mathbb{C}\, |\, \lambda\in S_{\widetilde{\theta}}\}$ for some $\delta>0$ and $\widetilde{\theta}\in(\theta,\pi)$, see e.g. \cite[(III.4.6.4)-(III.4.6.5)]{Am}. Therefore, whenever $A\in \mathcal{P}(\theta)$ we can always assume that $\theta>0$.

If $A\in\mathcal{P}(\theta)$, $\theta\in(0,\pi)$, then we can use the Dunford integral formula
\begin{equation}\label{dunfordformula}
\frac{1}{2\pi i}\int_{\Gamma_{R,\theta}}f(\lambda)(A+\lambda)^{-1} d\lambda
\end{equation}
with $f(\lambda)=(-\lambda)^{-z}$ to define complex powers $A^{-z}$, $\mathrm{Re}(z)>0$, of $A$. Here 
$$
\Gamma_{R,\theta}=\{ re^{-i\theta}\in\mathbb{C}\,|\,\infty>r\geq R\} \cup\{ Re^{i\phi}\in\mathbb{C}\,|\,2\pi-\theta\ge\phi\ge\theta\} \cup\{ re^{+i\theta}\in\mathbb{C}\,|\,R\le r<\infty\}
$$
and $R>0$ is such that the ball with center at the origin and radius $R>0$ is contained in $\rho(-A)$. The operators $A^{-z}\in\mathcal{L}(X_{0})$ are injections and allow the definition of $A^{z}=(A^{-z})^{-1}$, which are in general unbounded operators, see \cite[(III.4.6.12)]{Am}. By using Cauchy's integral formula we can deform the formula \eqref{dunfordformula} and define the imaginary powers $A^{it}$, $t\in\mathbb{R}\backslash\{0\}$, as the closure of the operator
$$
 \mathcal{D}(A)\ni x\mapsto A^{it}x=\frac{\sin(i\pi t)}{i\pi t}\int_{0}^{+\infty}s^{it}(A+s)^{-2}Ax ds,
$$
see e.g. \cite[(III.4.6.12)]{Am}. For the properties of the complex powers of a sectorial operator we refer to \cite[Theorem III.4.6.5]{Am}. Concerning the imaginary powers, the following property can be satisfied.

\begin{definition}[Bounded imaginary powers] Let $A\in\mathcal{P}(0)$ in $X_{0}$ and assume that there exists some $\delta,M>0$ such that $A^{it}\in \mathcal{L}(X_{0})$, $t\in(-\delta,\delta)$, and $\|A^{it}\|_{\mathcal{L}(X_{0})}\leq M$ when $t\in(-\delta,\delta)$. Then, $A^{it}\in \mathcal{L}(X_{0})$ for each $t\in\mathbb{R}$ and there exists some $\phi,\widetilde{M}>0$ such that $\|A^{it}\|_{\mathcal{L}(X_{0})}\leq \widetilde{M}e^{\phi|t|}$, $t\in\mathbb{R}$; in this case we say that {\em $A$ has bounded imaginary powers} and denote $A\in\mathcal{BIP}(\phi)$.
\end{definition}

Recall that if $A\in \mathcal{P}(\theta)$ with $\theta>\frac{\pi}{2}$ then $-A$ generates an analytic semigroup on $X_{0}$. The semigroup $\{e^{-tA}\}_{t\geq0}$ can be defined by \eqref{dunfordformula} with the choice $f(\lambda)=e^{t\lambda}$; it extends analytically to a sector of angle $\theta-\frac{\pi}{2}$.

Let $q\in(1,\infty)$, $T>0$, and consider the following abstract first order Cauchy problem
\begin{eqnarray}\label{app1}
u'(t)+Au(t)&=&w(t), \quad t\in(0,T),\\\label{app2}
u(0)&=&0,
\end{eqnarray}
where $-A:X_{1}\rightarrow X_{0}$ is the generator of an analytic semigroup on $X_{0}$ and $w\in L^q(0,T;X_{0})$. 

\begin{definition}
The operator $A$ has {\em maximal $L^q$-regularity} if for any $w$ there exists a unique 
$$
u\in W^{1,q}(0,T;X_{0})\cap L^{q}(0,T;X_{1})
$$ 
solving \eqref{app1}-\eqref{app2}. In this case, $u$ also depends continuously on $w$. Furthermore, the above property is independent of $q$ and $T$.
\end{definition}
 
If we restrict to the class of UMD (unconditionality of martingale differences property, see \cite[Section III.4.4]{Am}) Banach spaces, then the following result holds.

\begin{theorem}[{\rm Dore and Venni, \cite[Theorem 3.2]{DV}}]\label{dorevenni}
If $X_{0}$ is UMD and $A\in \mathcal{BIP}(\phi)$ with $\phi<\frac{\pi}{2}$ in $X_{0}$, then $A$ has maximal $L^{q}$-regularity. 
\end{theorem}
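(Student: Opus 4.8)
The plan is to recast maximal $L^{q}$-regularity as an invertibility statement for a sum of two commuting sectorial operators and then carry out the operator-sum method. On the base space $E=L^{q}(0,T;X_{0})$ I introduce the pointwise extension $\mathcal{A}$ of $A$, defined by $(\mathcal{A}u)(t)=A(u(t))$ with domain $L^{q}(0,T;X_{1})$, and the time-derivative operator $\mathcal{D}_{t}=\frac{d}{dt}$ with domain $\{u\in W^{1,q}(0,T;X_{0})\,|\,u(0)=0\}$. Solving \eqref{app1}-\eqref{app2} in the class $W^{1,q}(0,T;X_{0})\cap L^{q}(0,T;X_{1})$ is exactly solving $(\mathcal{D}_{t}+\mathcal{A})u=w$ on $\mathcal{D}(\mathcal{D}_{t})\cap\mathcal{D}(\mathcal{A})$; thus it suffices to show that $\mathcal{D}_{t}+\mathcal{A}$ is boundedly invertible from this intersection domain onto $E$, together with the two-sided a priori estimate $\|\mathcal{D}_{t}u\|_{E}+\|\mathcal{A}u\|_{E}\leq C\|(\mathcal{D}_{t}+\mathcal{A})u\|_{E}$, which automatically delivers uniqueness and continuous dependence on $w$. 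The operator $\mathcal{A}$ inherits sectoriality and bounded imaginary powers from $A$ with the \emph{same} angle $\phi$, since $\mathcal{A}^{it}$ acts as pointwise application of $A^{it}$ and $\|\mathcal{A}^{it}\|_{\mathcal{L}(E)}=\|A^{it}\|_{\mathcal{L}(X_{0})}$; moreover $\mathcal{A}$ is invertible because $0\in\rho(-A)$ by the definition of $\mathcal{P}(\theta)$.

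The first substantial step is to prove that $\mathcal{D}_{t}\in\mathcal{BIP}(\frac{\pi}{2})$ on $E$ and is invertible. Invertibility is immediate on the finite interval with vanishing initial data, the inverse being the Volterra integration operator $u\mapsto\int_{0}^{t}u(\tau)\,d\tau$. For the imaginary powers I compute, after extending by zero to $L^{q}(\mathbb{R};X_{0})$, that $\mathcal{D}_{t}^{is}$ is a Fourier multiplier with symbol $(i\tau)^{is}$; its boundedness on the vector-valued space $L^{q}(\mathbb{R};X_{0})$, with operator norm growing like $e^{\frac{\pi}{2}|s|}$, is precisely where the UMD hypothesis enters, through the boundedness of the vector-valued Hilbert transform and the operator-valued Mikhlin multiplier theorem. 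This yields the power angle $\frac{\pi}{2}$ for $\mathcal{D}_{t}$. Since the resolvents of $\mathcal{A}$ and $\mathcal{D}_{t}$ commute (one acts in the space variable, the other in time), we are in the setting of two commuting invertible sectorial operators with bounded imaginary powers whose power angles satisfy $\phi+\frac{\pi}{2}<\pi$.

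The core of the argument is then the operator-sum representation of the inverse. On a dense subspace, and using the commutation of resolvents together with the complex powers defined through \eqref{dunfordformula}, I represent
\begin{equation}\label{sumformula}
(\mathcal{D}_{t}+\mathcal{A})^{-1}=\frac{1}{2i}\int_{\mathrm{Re}(z)=\sigma}\mathcal{A}^{z-1}\mathcal{D}_{t}^{-z}\,\frac{dz}{\sin(\pi z)}, \qquad \sigma\in(0,1),
\end{equation}
the identity being verified by contour shifting and the resolvent identity applied to vectors in $\mathcal{D}(\mathcal{A})\cap\mathcal{D}(\mathcal{D}_{t})$. The kernel $1/\sin(\pi z)$ decays like $e^{-\pi|\mathrm{Im}(z)|}$, while the BIP bounds give $\|\mathcal{A}^{z-1}\|_{\mathcal{L}(E)}\lesssim e^{\phi|\mathrm{Im}(z)|}$ and $\|\mathcal{D}_{t}^{-z}\|_{\mathcal{L}(E)}\lesssim e^{\frac{\pi}{2}|\mathrm{Im}(z)|}$ along the vertical line. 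Hence the integrand in \eqref{sumformula} is dominated by $e^{(\phi+\frac{\pi}{2}-\pi)|\mathrm{Im}(z)|}$, which is integrable exactly because $\phi<\frac{\pi}{2}$; this exponential bookkeeping is the heart of the theorem and the main obstacle, since it is what forces the angle restriction. Convergence of \eqref{sumformula} produces a bounded operator $S\in\mathcal{L}(E)$; multiplying by $\mathcal{A}$ on the left and shifting the contour shows that $\mathcal{A}S$ and, symmetrically, $\mathcal{D}_{t}S$ are bounded, which simultaneously gives the a priori estimate and the fact that $S$ maps $E$ into $\mathcal{D}(\mathcal{A})\cap\mathcal{D}(\mathcal{D}_{t})$. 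A routine check that $(\mathcal{D}_{t}+\mathcal{A})S=\mathrm{Id}$ and $S(\mathcal{D}_{t}+\mathcal{A})=\mathrm{Id}$ on the respective domains identifies $S=(\mathcal{D}_{t}+\mathcal{A})^{-1}$, establishing maximal $L^{q}$-regularity. Finally, the $q$- and $T$-independence follows from the abstract characterisation of maximal regularity through the real interpolation trace space, so that the conclusion holds for every $q\in(1,\infty)$ and $T>0$.
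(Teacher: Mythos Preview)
The paper does not prove this statement at all: Theorem~\ref{dorevenni} is simply quoted from \cite[Theorem 3.2]{DV} and used as a black box, with no accompanying argument. So there is nothing in the paper to compare your proposal against.

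That said, your sketch is an accurate outline of the original Dore--Venni proof. You correctly identify the reformulation of maximal $L^q$-regularity as closedness and invertibility of the operator sum $\mathcal{D}_t+\mathcal{A}$ on $L^q(0,T;X_0)$, the role of UMD in obtaining $\mathcal{D}_t\in\mathcal{BIP}(\tfrac{\pi}{2})$ via the vector-valued Hilbert transform, and the Mellin-type contour integral representation \eqref{sumformula} whose absolute convergence hinges on the angle inequality $\phi+\tfrac{\pi}{2}<\pi$. One small caution: in the original Dore--Venni setting $\mathcal{D}_t$ on the finite interval with zero initial condition is not invertible in $\mathcal{P}(\theta)$ in the sense of the paper (zero is in the resolvent set only after a spectral shift, or one works on $L^q(\mathbb{R}_+;X_0)$ with an exponential weight); you gloss over this, but the standard fix---replacing $A$ by $A+\varepsilon$ and letting $\varepsilon\downarrow 0$, or working on the half-line---is routine. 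Apart from that technicality your proposal is sound.
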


Note that in general we have $\mathcal{BIP}(\pi-\theta)\subset \mathcal{P}(\theta)$, $\theta\in(0,\pi)$. Hence, in the conditions of Dore and Venni Theorem, $-A$ is the generator of an analytic semigroup.

Let $q\in(1,\infty)$, $U$ be an open subset of $(X_{1},X_{0})_{\frac{1}{q},q}$, $-A\in \mathcal{L}(X_{1},X_{0})$ be the generator of an analytic semigroup and $F: U\times [0,T_{0}]\rightarrow X_{0}$ for some $T_{0}>0$. Consider the equation
\begin{eqnarray}\label{aqpp1}
u'(t)+Au(t)&=&F(u(t),t)+G(t),\quad t\in(0,T),\\\label{aqpp2}
u(0)&=&u_{0},
\end{eqnarray}
where $T\in(0,T_{0})$, $u_{0}\in U$ and $G\in L^{q}(0,T_{0};X_{0})$. Short time existence and uniqueness of solutions of the above equation is obtained by the following special case of a maximal $L^q$-regularity result by Cl\'ement and Li, namely of \cite[Theorem 2.1]{CL}. In the sequel, if $V\subseteq (X_{1},X_{0})_{\frac{1}{q},q}$ then by $F\in C^{1-}(V\times[0,T_{0}];X_{0})$ we mean
$$
\|F(v_{1},t_{1})-F(v_{2},t_{2})\| _{X_{0}}\le L(\|v_{1}-v_{2}\|_{(X_{1,}X_{0})_{\frac{1}{q},q}}+|t_{1}-t_{2}|),\quad v_{1}, v_{2}\in V, \, t_{1},t_{2}\in[0,T_{0}],
$$
for certain $L>0$ depending on $F$, $V$ and $T_{0}$.

\begin{theorem}[{\rm Cl\'ement and Li}]\label{thcl}
Assume that $A$ has maximal $L^{q}$-regularity and $F\in C^{1-}(U\times[0,T_{0}];X_{0})$. Then there exists a $T\in(0,T_{0}]$ and a unique $u\in W^{1,q}(0,T;X_{0})\cap L^{q}(0,T;X_{1})$ solving \eqref{aqpp1}-\eqref{aqpp2}.
\end{theorem}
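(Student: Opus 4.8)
The plan is to convert \eqref{aqpp1}--\eqref{aqpp2} into a fixed point equation in the maximal regularity space and solve it by the contraction mapping principle on a short interval. Write $\mathbb{E}_T=W^{1,q}(0,T;X_0)\cap L^q(0,T;X_1)$ with its natural norm, and let ${}_0\mathbb{E}_T=\{v\in\mathbb{E}_T:v(0)=0\}$, a closed subspace. Two structural facts drive the argument. First, since $A$ has maximal $L^q$-regularity, the map sending $w\in L^q(0,T;X_0)$ to the unique solution $v\in{}_0\mathbb{E}_T$ of $v'+Av=w$, $v(0)=0$ defines a bounded operator $\mathcal{S}_T$; extending $w$ by zero to $(0,T_0)$ and restricting the solution (using locality in time and uniqueness) shows $\|\mathcal{S}_T\|\le\|\mathcal{S}_{T_0}\|=:M$ for all $T\le T_0$. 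Second, there is the trace embedding $\mathbb{E}_T\hookrightarrow C([0,T];(X_1,X_0)_{\frac1q,q})$, whose constant $c$ can likewise be taken independent of $T\le T_0$ (this is exactly why $(X_1,X_0)_{\frac1q,q}$ is the correct space of initial data), and the trace $u\mapsto u(0)$ is surjective onto $(X_1,X_0)_{\frac1q,q}$.

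Using surjectivity of the trace (equivalently, since $-A$ generates an analytic semigroup and $u_0\in(X_1,X_0)_{\frac1q,q}$, the function $t\mapsto e^{-tA}u_0$ lies in $\mathbb{E}_T$), I would fix a reference $u_*\in\mathbb{E}_T$ with $u_*(0)=u_0$ and set $g_*=u_*'+Au_*\in L^q(0,T;X_0)$. Substituting $u=u_*+v$ reduces the problem to finding $v\in{}_0\mathbb{E}_T$ with $v'+Av=F(u_*+v,\cdot)+G-g_*$, i.e. to the fixed point equation $v=\Phi(v):=\mathcal{S}_T\big[F(u_*+v,\cdot)+G-g_*\big]$. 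To make this meaningful I restrict to a ball $\bar B_R\subset{}_0\mathbb{E}_T$: since $U$ is open, pick $r>0$ with $B(u_0,r)\subset U$, fix $R$ with $cR<r/2$, and then shrink $T$ so that $\sup_{t\le T}\|u_*(t)-u_0\|_{(X_1,X_0)_{\frac1q,q}}<r/2$. For $v\in\bar B_R$ the trajectory $u_*(t)+v(t)$ then stays in $U$, and because $F\in C^{1-}(U\times[0,T_0];X_0)$ the superposition $t\mapsto F((u_*+v)(t),t)$ is bounded and strongly measurable, hence in $L^q(0,T;X_0)$, so $\Phi$ maps $\bar B_R$ into ${}_0\mathbb{E}_T$.

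The quantitative estimates come from the Lipschitz bound together with a Hölder factor. For $v_1,v_2\in\bar B_R$,
\begin{align*}
\|\Phi(v_1)-\Phi(v_2)\|_{\mathbb{E}_T}
&\le M\,\|F(u_*+v_1,\cdot)-F(u_*+v_2,\cdot)\|_{L^q(0,T;X_0)}\\
&\le M L\,\|v_1-v_2\|_{L^q(0,T;(X_1,X_0)_{\frac1q,q})}
\le MLc\,T^{1/q}\,\|v_1-v_2\|_{\mathbb{E}_T},
\end{align*}
the last step being Hölder in time followed by the trace embedding. Similarly $\|\Phi(v)\|_{\mathbb{E}_T}\le M\big(T^{1/q}C_0+\|G\|_{L^q(0,T)}+\|g_*\|_{L^q(0,T)}\big)$, where $C_0=\|F(u_0,0)\|_{X_0}+L(r+T_0)$ bounds $\|F((u_*+v)(t),t)\|_{X_0}$ uniformly over $\bar B_R$ via the Lipschitz estimate. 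Choosing $T\le T_0$ small enough that $MLcT^{1/q}<1$ and that the right-hand side of the previous inequality is $\le R$ (using $T^{1/q}\to0$ and absolute continuity of $\int\|G\|^q$ and $\int\|g_*\|^q$), I conclude that $\Phi$ is a contraction of the complete metric space $\bar B_R$ into itself, so Banach's theorem yields a unique fixed point $v$, and $u=u_*+v\in\mathbb{E}_T$ solves \eqref{aqpp1}--\eqref{aqpp2}.

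Uniqueness of the full solution (not merely in $\bar B_R$) follows from the same estimate: if $u,\tilde u\in\mathbb{E}_T$ both solve the problem, then $w=u-\tilde u$ has $w(0)=0$ and satisfies $w=\mathcal{S}_\tau[F(u,\cdot)-F(\tilde u,\cdot)]$ on $(0,\tau)$, whence $\|w\|_{\mathbb{E}_\tau}\le MLc\,\tau^{1/q}\|w\|_{\mathbb{E}_\tau}$; for $\tau$ small this forces $w=0$, and a standard continuation argument (both solutions remain in $U$ by continuity and one restarts from any time of coincidence) propagates $w\equiv0$ across $[0,T]$. The only genuine subtlety, and the heart of the matter, is the pair of $T$-uniform structural estimates: the boundedness of the zero-trace solution operator $\mathcal{S}_T$ and of the trace embedding constant $c$ for all $T\le T_0$. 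Granting those, the self-mapping and contraction properties are delivered automatically by the factor $T^{1/q}$, which is precisely the mechanism converting the Lipschitz hypothesis on $F$ into short-time well-posedness.
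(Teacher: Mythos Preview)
The paper does not supply its own proof of this theorem; it merely quotes it as a special case of \cite[Theorem 2.1]{CL} and uses it as a black box. Your proposal is the standard contraction-mapping argument in the maximal regularity class $\mathbb{E}_T$, which is precisely the strategy of the original Cl\'ement--Li paper, and it is correctly executed: the reduction via a reference trajectory $u_*$ with the right trace, the uniform-in-$T$ bound on the zero-trace solution operator $\mathcal{S}_T$ (by extension/restriction), the uniform trace embedding constant on ${}_0\mathbb{E}_T$, and the appearance of the factor $T^{1/q}$ via H\"older are exactly the mechanisms that make the argument close. One minor point worth making explicit is that the $T$-uniform trace constant $c$ is only needed for elements of ${}_0\mathbb{E}_T$ (which is where you apply it, to $v_1-v_2$ and to $v$); on all of $\mathbb{E}_T$ the embedding constant genuinely depends on $T$, but you never need that. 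Since the paper itself offers no argument to compare against, there is nothing further to contrast.
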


\begin{remark}\label{embedtoC}
For each $q\in(1,\infty)$ and $T>0$ the following embedding holds
$$
W^{1,q}(0,T;X_{0})\cap L^{q}(0,T;X_{1})\hookrightarrow C([0,T];(X_{1},X_{0})_{\frac{1}{q},q}),
$$
see e.g. \cite[Theorem III.4.10.2]{Am}.
\end{remark}

In order to investigate the existence of global solutions, we define the maximal interval of existence $[0,T_{\max})$, where $T_{\max}>0$ is the supremum
of all $T\in(0,T_{0}]$ for which there is a $u\in W^{1,q}(0,T;X_{0})\cap L^{q}(0,T;X_{1})$ solving \eqref{aqpp1}-\eqref{aqpp2}. By the uniqueness provided by the Clement-Li theorem, there exists a unique function $u:[0,T_{\max})\to X_{0}$, called maximal solution, such that for all $T<T_{\max}$ we have that $u|_{[0,T)} \in W^{1,q}(0,T;X_{0})\cap L^{q}(0,T;X_{1})$ solves \eqref{aqpp1}-\eqref{aqpp2}.

\begin{corollary}[Global existence]\label{maximalinterval}
Assume that $A$ has maximal $L^{q}$-regularity, $G\in L^{q}(0,T_{0};X_{0})$ and $F:U\times[0,T_{0}]\to X_{0}$ is a function such that, for any $v\in U$ there exists a neighborhood $V\subseteq U$ containing $v$ such that $F|_{V\times[0,T_{0}]}\in C^{1-}(V\times[0,T_{0}];X_{0})$. Moreover, let $[0,T_{\max})$, $T_{\max}\le T_{0}$, be the maximal interval of existence and let $u:[0,T_{\max})\to X_{0}$ be the maximal solution of \eqref{aqpp1}-\eqref{aqpp2}. If $T_{\max}<T_{0}$, then one of the following two situations occurs:\\
{\em (i)} The limit $\lim_{t\to T_{\max}^{-}}u(t)$ exists in $(X_{1},X_{0})_{\frac{1}{q},q}$ but does not belong to $U$.\\
{\em (ii)} $\|F(u(\cdot),\cdot)\|_{L^{q}(0,T_{\max};X_{0})}=\infty$.
\end{corollary}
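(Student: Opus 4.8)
The plan is to argue by contradiction. Suppose $T_{\max}<T_{0}$ and that \emph{neither} (i) \emph{nor} (ii) holds; we shall produce a solution on an interval strictly larger than $[0,T_{\max})$, contradicting the definition of $T_{\max}$ as a supremum. Since (ii) fails, $f:=F(u(\cdot),\cdot)+G$ belongs to $L^{q}(0,T_{\max};X_{0})$. The first step is to upgrade the maximal solution up to the endpoint. Fix $T_{1}\in(0,T_{\max})$; since $u|_{[0,T_{1}]}\in W^{1,q}(0,T_{1};X_{0})\cap L^{q}(0,T_{1};X_{1})$, Remark \ref{embedtoC} gives $u(T_{1})\in(X_{1},X_{0})_{\frac{1}{q},q}$. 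By the maximal $L^{q}$-regularity of $A$ together with its standard extension to non-zero initial data in the trace space (see, e.g., \cite{PS}), the linear problem $v'+Av=f$ on $(T_{1},T_{\max})$ with $v(T_{1})=u(T_{1})$ has a unique solution $v\in W^{1,q}(T_{1},T_{\max};X_{0})\cap L^{q}(T_{1},T_{\max};X_{1})$; uniqueness of the Cl\'ement--Li solution of \eqref{aqpp1}--\eqref{aqpp2} on every $[T_{1},T]$, $T<T_{\max}$, forces $u=v$ on $[T_{1},T_{\max})$. Combining with $u|_{[0,T_{1}]}\in W^{1,q}(0,T_{1};X_{0})\cap L^{q}(0,T_{1};X_{1})$ yields
$$
u\in W^{1,q}(0,T_{\max};X_{0})\cap L^{q}(0,T_{\max};X_{1}),
$$
hence, by Remark \ref{embedtoC} again, $u\in C([0,T_{\max}];(X_{1},X_{0})_{\frac{1}{q},q})$, so $u_{1}:=\lim_{t\to T_{\max}^{-}}u(t)$ exists in $(X_{1},X_{0})_{\frac{1}{q},q}$. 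As (i) fails, necessarily $u_{1}\in U$.

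Next I would restart the equation at $t=T_{\max}$. By hypothesis there is an open neighbourhood $V\subseteq U$ of $u_{1}$ with $F|_{V\times[0,T_{0}]}\in C^{1-}(V\times[0,T_{0}];X_{0})$. Setting $\widetilde F(v,t)=F(v,T_{\max}+t)$ and $\widetilde G(t)=G(T_{\max}+t)$ on $[0,T_{0}-T_{\max}]$ and applying Theorem \ref{thcl} (with $U$ replaced by the open set $V$) to the translated problem with initial datum $u_{1}\in V$, we obtain $\delta\in(0,T_{0}-T_{\max}]$ and a unique $v\in W^{1,q}(0,\delta;X_{0})\cap L^{q}(0,\delta;X_{1})$ with $v'+Av=\widetilde F(v,\cdot)+\widetilde G$ on $(0,\delta)$ and $v(0)=u_{1}$. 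Define $\widetilde u$ on $[0,T_{\max}+\delta]$ by $\widetilde u=u$ on $[0,T_{\max}]$ and $\widetilde u(t)=v(t-T_{\max})$ for $t\in[T_{\max},T_{\max}+\delta]$. Since functions in $W^{1,q}$ are absolutely continuous and the two pieces agree at $t=T_{\max}$ (both equal $u_{1}$, using $(X_{1},X_{0})_{\frac{1}{q},q}\hookrightarrow X_{0}$), we get $\widetilde u\in W^{1,q}(0,T_{\max}+\delta;X_{0})\cap L^{q}(0,T_{\max}+\delta;X_{1})$, $\widetilde u(0)=u_{0}\in U$, and $\widetilde u'+A\widetilde u=F(\widetilde u(\cdot),\cdot)+G$ a.e.\ on $(0,T_{\max}+\delta)$. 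Since $T_{\max}<T_{\max}+\delta\le T_{0}$, this contradicts the definition of $T_{\max}$; hence if $T_{\max}<T_{0}$ then (i) or (ii) must occur.

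I expect the only genuine difficulty to be the first step, namely promoting $u$ from a solution on each $[0,T]$, $T<T_{\max}$, to an element of the maximal $L^{q}$-regularity space on the whole of $(0,T_{\max})$: the maximal regularity recalled in this section is formulated for zero initial data, so one must invoke its counterpart for initial data in the trace space $(X_{1},X_{0})_{\frac{1}{q},q}$, which in turn rests on the semigroup characterization of that space and on the $q$- and $T$-independence of maximal $L^{q}$-regularity. The continuous extension to the endpoint, the use of the merely \emph{local} $C^{1-}$-hypothesis on $F$ through a suitable neighbourhood $V$, and the absolute-continuity gluing at $t=T_{\max}$ are then routine.
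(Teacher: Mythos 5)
Your proposal is correct and follows essentially the same route as the paper: treat $F(u(\cdot),\cdot)+G$ as a known $L^{q}$ forcing, use linear maximal $L^{q}$-regularity (with trace-space initial data) to place $u$ in $W^{1,q}\cap L^{q}(\cdot;X_{1})$ up to $T_{\max}$, extract $\lim_{t\to T_{\max}^{-}}u(t)$ via Remark \ref{embedtoC}, and restart with Theorem \ref{thcl} to contradict maximality. The only cosmetic difference is that the paper obtains the endpoint regularity by the maximal-regularity a priori estimate on $[0,T]$ and letting $T\to T_{\max}$, whereas you solve the linear problem directly on an interval ending at $T_{\max}$; you also correctly note that the limit must land in (i), where the paper's text has a typo saying ``(ii)''.
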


By the Corollary \ref{maximalinterval}, if $U$ is the whole space $(X_{1},X_{0})_{\frac{1}{q},q}$, then $T_{\max}<T_{0}$ can happen only if situation (ii) occurs.

Note also that for any $T<T_{\max}$ we have $u\in C([0,T];(X_{1},X_{0})_{\frac{1}{q},q})$. Therefore $[0,T]\ni t\mapsto F(u(t),t)\in X_{0}$
belongs to $C([0,T];X_{0})\subset L^{q}(0,T;X_{0})$ and condition (ii) is equivalent to
$$
\lim_{T\to T_{\max}^{-}}\|F(u(\cdot),\cdot)\| _{L^{q}(0,T;X_{0})}=\infty.
$$

\begin{proof}
Suppose that $T_{\max}<T_{0}$, let $T\in(0,T_{\max})$ and consider the problem 
\begin{eqnarray}\label{aqppA}
w'(t)+Aw(t)&=&F(u(t),t)+G(t),\quad t\in(0,T),\\\label{aqppB}
w(0)&=&u_{0}.
\end{eqnarray}
By the maximal $L^{q}$-regularity property of $A$, \eqref{aqppA}-\eqref{aqppB}
has a unique solution 
$$
w\in W^{1,q}(0,T;X_{0})\cap L^{q}(0,T;X_{1}).
$$
Clearly, $u$ is also a solution of \eqref{aqppA}-\eqref{aqppB}
in the above space, so that, by uniqueness, $u=w$.

By the maximal $L^{q}$-regularity inequality, see e.g. \cite[(2.2)]{CL},
we have that 
$$
\|u\|_{W^{1,q}(0,T;X_{0})\cap L^{q}(0,T;X_{1})}\leq C(\|F(u(\cdot),\cdot)+G(\cdot)\|_{L^{q}(0,T;X_{0})}+\|u_{0}\|_{(X_{1},X_{0})_{\frac{1}{q},q}}),
$$
for some constant $C>0$ depending on $A$, $q$ and $T_{max}$. Suppose that $F(u(\cdot),\cdot)\in L^{q}(0,T_{\max};X_{0})$. Then, by letting $T \rightarrow T_{\max}$,
we get that 
$$
\|u\|_{W^{1,q}(0,T_{\max};X_{0})\cap L^{q}(0,T_{\max};X_{1})}<\infty.
$$ 
In particular, by Remark \ref{embedtoC}, $u\in C([0,T_{\max}];(X_{1},X_{0})_{\frac{1}{q},q})$,
so that $u$ can be extended continuously up to $T_{\max}$. Let $u(T_{\max})=\lim_{t\to T_{\max}^{-}}u(t)$ in $(X_{1},X_{0})_{\frac{1}{q},q}$. If $u(T_{\max})\notin U$, then we are under the condition (i) of the Corollary. Let us show that we obtain a contradiction in the case $u(T_{\max})\in U$.

Consider the problem 
\begin{eqnarray}
v'(t)+Av(t) & = & F(v(t),t)+G(t),\quad t\ge T_{\max},\label{aqppC}\\
v(T_{\max}) & = & u(T_{\max}).\label{aqppD}
\end{eqnarray}
 Since $u(T_{\max})\in U$, by Theorem
\ref{thcl} there exists $T_{1}\in (T_{\max},T_{0})$ and a
unique 
$$
v\in W^{1,q}(T_{\max},T_{1};X_{0})\cap L^{q}(T_{\max},T_{1};X_{1})
$$
solving \eqref{aqppC}-\eqref{aqppD}.
Then, the function 
$$
f=\bigg\{\begin{array}{ccc}
u & \text{in} & [0,T_{\max}]\\
v & \text{in} & (T_{\max},T_{1}]
\end{array}
$$
 is a solution
of \eqref{aqpp1}-\eqref{aqpp2} in $W^{1,q}(0,T_{1};X_{0})\cap L^{q}(0,T_{1};X_{1})$,
contradicting the maximality of $[0,T_{\max})$.
\end{proof}

\subsection{Smoothing} 

Concerning smoothness in time for solutions of \eqref{aqpp1}-\eqref{aqpp2}, we recall the following theorem, which is a particular case of a result by Pr\"uss and Simonett \cite[Theorem 5.2.1]{PS}.

\begin{thm}[{\rm Pr\"uss and Simonett}]\label{regularity} Assume that $A:X_{1} \to X_{0}$ has maximal $L^{q}$-regularity, $F\in C^{\infty}((X_{1},X_{0})_{\frac{1}{q},q};X_{0})$, $G\equiv0$ and let $u\in W^{1,q}(0,T;X_{0})\cap L^{q}(0,T;X_{1})$ be the unique solution of \eqref{aqpp1}-\eqref{aqpp2} for some $T>0$. Then $u\in C^{\infty}((0,T);X_{1})$.
\end{thm}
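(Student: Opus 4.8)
The plan is to exploit the autonomous structure (here $G\equiv0$ and $F$ is $t$-independent) together with Angenent's rescaling trick and the implicit function theorem, following the scheme of Pr\"uss and Simonett. Throughout write $\mathbb{E}(\tau)=W^{1,q}(0,\tau;X_{0})\cap L^{q}(0,\tau;X_{1})$ and $\mathbb{F}(\tau)=L^{q}(0,\tau;X_{0})$, and recall from Remark \ref{embedtoC} that $\mathbb{E}(\tau)\hookrightarrow C([0,\tau];(X_{1},X_{0})_{\frac1q,q})$; in particular $u_{0}=u(0)\in(X_{1},X_{0})_{\frac1q,q}$. Since \eqref{aqpp1}--\eqref{aqpp2} is autonomous, for every $\lambda$ in a neighborhood of $1$ the rescaled curve $u_{\lambda}(t):=u(\lambda t)$ solves $u_{\lambda}'+\lambda A u_{\lambda}=\lambda F(u_{\lambda})$ with $u_{\lambda}(0)=u_{0}$ on a fixed interval $[0,T_{1}]$, provided $T_{1}<T$ and $\varepsilon$ is small enough that $\lambda T_{1}<T$. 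The idea is to show that $\lambda\mapsto u_{\lambda}$ is a smooth curve in $\mathbb{E}(T_{1})$ and then to read off time regularity of $u$ by differentiating in $\lambda$ at $\lambda=1$.

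To this end I would define
$$
\Phi:(1-\varepsilon,1+\varepsilon)\times\mathbb{E}(T_{1})\to\mathbb{F}(T_{1})\times(X_{1},X_{0})_{\frac1q,q},\qquad \Phi(\lambda,v)=\big(v'+\lambda A v-\lambda F(v),\,v(0)-u_{0}\big),
$$
so that $\Phi(1,u|_{[0,T_{1}]})=0$. The map $\Phi$ is $C^{\infty}$: the term $v'+\lambda Av$ is affine in $\lambda$ and linear in $v$, the trace $v\mapsto v(0)$ is bounded linear, and the substitution operator $v\mapsto F(v)$ is smooth from $\mathbb{E}(T_{1})$ to $\mathbb{F}(T_{1})$ because it factors through the embedding $\mathbb{E}(T_{1})\hookrightarrow C([0,T_{1}];(X_{1},X_{0})_{\frac1q,q})$, the smooth Nemytskii operator induced on $C([0,T_{1}];\cdot\,)$ by $F\in C^{\infty}((X_{1},X_{0})_{\frac1q,q};X_{0})$, and the continuous inclusion $C([0,T_{1}];X_{0})\hookrightarrow\mathbb{F}(T_{1})$.

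The main obstacle is to verify that the partial differential
$$
D_{v}\Phi(1,u)\,h=\big(h'+Ah-F'(u(\cdot))h,\,h(0)\big)
$$
is an isomorphism of $\mathbb{E}(T_{1})$ onto $\mathbb{F}(T_{1})\times(X_{1},X_{0})_{\frac1q,q}$; equivalently, that the linearized nonautonomous problem $h'+Ah-F'(u(t))h=f$, $h(0)=h_{0}$, is uniquely solvable in $\mathbb{E}(T_{1})$ for all data. Here $B(t):=F'(u(t))\in\mathcal{L}((X_{1},X_{0})_{\frac1q,q},X_{0})$ depends continuously on $t$, since $u\in C([0,T_{1}];(X_{1},X_{0})_{\frac1q,q})$ and $F'$ is continuous. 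By the interpolation inequality for $(X_{1},X_{0})_{\frac1q,q}$ together with Young's inequality, $B(t)$ is a perturbation of relative bound zero with respect to $A$; thus, $A$ having maximal $L^{q}$-regularity, a standard perturbation argument (shrinking $T_{1}$ if necessary to make the perturbation small in $\mathcal{L}(\mathbb{E}(T_{1}),\mathbb{F}(T_{1}))$ and running a Neumann series on the solution operator) shows that the linearization again has maximal $L^{q}$-regularity, which is precisely the bijectivity of $D_{v}\Phi(1,u)$.

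The implicit function theorem then produces a $C^{\infty}$ curve $\lambda\mapsto v(\lambda)\in\mathbb{E}(T_{1})$ with $v(1)=u|_{[0,T_{1}]}$ and $\Phi(\lambda,v(\lambda))=0$; by uniqueness for the rescaled equation, $v(\lambda)(t)=u(\lambda t)$. Differentiating $k$ times at $\lambda=1$ gives $t^{k}u^{(k)}(t)\in\mathbb{E}(T_{1})$ for every $k\in\mathbb{N}$. Restricting to an arbitrary $[a,b]\subset(0,T_{1})$, where $t^{k}$ is bounded away from $0$ and $\infty$, yields $u^{(k)}\in\mathbb{E}([a,b])\subset L^{q}(a,b;X_{1})$ for all $k$, hence $u\in W^{k,q}(a,b;X_{1})$ for all $k$, and Sobolev embedding in the time variable gives $u\in C^{\infty}((0,T_{1});X_{1})$. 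To cover all of $(0,T)$ when $T_{1}<T$, I would finally use autonomy once more: for any $\tau\in(0,T)$ choose $t_{1}\in(0,\tau)$, note that $\tilde u(\cdot)=u(t_{1}+\cdot)$ solves the same equation with $\tilde u(0)=u(t_{1})\in(X_{1},X_{0})_{\frac1q,q}$, and apply the previous argument on a short interval containing $\tau-t_{1}$, obtaining smoothness of $u$ near $\tau$. Hence $u\in C^{\infty}((0,T);X_{1})$.
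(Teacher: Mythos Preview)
The paper does not supply a proof of this statement; it is simply quoted as a special case of \cite[Theorem 5.2.1]{PS}. Your argument is precisely the Angenent--Pr\"uss--Simonett parameter trick that underlies that reference, so in that sense you are reproducing the approach of the cited source rather than deviating from it. The outline is correct: the rescaling $u_\lambda(t)=u(\lambda t)$, the map $\Phi$, smoothness of the substitution operator via the embedding $\mathbb{E}(T_1)\hookrightarrow C([0,T_1];(X_1,X_0)_{\frac1q,q})$, invertibility of $D_v\Phi(1,u)$ by a lower-order perturbation of maximal $L^q$-regularity, and the conclusion $t^k u^{(k)}\in\mathbb{E}(T_1)$ are exactly the ingredients. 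One small point worth tightening: to make the perturbation $h\mapsto F'(u(\cdot))h$ small in $\mathcal{L}(\mathbb{E}(T_1),\mathbb{F}(T_1))$ by shrinking $T_1$, you should work on the zero-trace subspace $_0\mathbb{E}(T_1)$ (where the embedding constant into $C([0,T_1];(X_1,X_0)_{\frac1q,q})$ is $T_1$-independent) and handle the trace component separately; otherwise the Neumann-series argument does not close as stated.
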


Next we combine the above theorem together with \cite[Theorem 3.1]{RS4} in order to obtain a space-time smoothing result. Consider two complex Banach scales $\{ Y_{0}^{k}\} _{k\in\mathbb{N}}$ and $\{ Y_{1}^{k}\} _{k\in\mathbb{N}}$
 such that for each $k\in\mathbb{N}$ we have $Y_{0}^{k+1}\overset{d}{\hookrightarrow}Y_{0}^{k}$, $Y_{1}^{k+1}\overset{d}{\hookrightarrow}Y_{1}^{k}$, $Y_{1}^{k}\overset{d}{\hookrightarrow}Y_{0}^{k}$, 
 and $Y_{1}^{k}\overset{d}{\hookrightarrow}(Y_{1}^{k+1},Y_{0}^{k+1})_{\frac{1}{q},q}$, for some fixed $q\in(1,\infty)$. Moreover, consider the equation
\begin{eqnarray}\label{ACP}
u'(t)+Au(t)&=&F(u(t)),\quad t>0,\\\label{ACP2}
u(0)&=&u_{0},
\end{eqnarray}
where $-A\in \mathcal{L}(Y_{1}^{0},Y_{0}^{0})$ is the generator of an analytic semigroup, $F:(Y_{1}^{0},Y_{0}^{0})_{\frac{1}{q},q}\to Y_{0}^{0}$ is a suitable map and $u_{0}\in(Y_{1}^{0},Y_{0}^{0})_{\frac{1}{q},q}$.

\begin{thm}[Smoothing]\label{abstractshortsmooth}
 Suppose that for each $k\in\mathbb{N}${\em:}\\
{\em (i)} $A(Y_{1}^{k})\subset Y_{0}^{k}$ and $A:Y_{1}^{k}\to Y_{0}^{k}$ has maximal $L^{q}$-regularity.\\
{\em (ii)} $F|_{(Y_{1}^{k},Y_{0}^{k})_{\frac{1}{q},q}}\in C^{\infty}((Y_{1}^{k},Y_{0}^{k})_{\frac{1}{q},q};Y_{0}^{k})$ and $F|_{Y_{1}^{k}}\in C(Y_{1}^{k};Y_{0}^{k+1})$.\\
Then there exists $T>0$ and a unique function $u\in W^{1,q}(0,T;Y_{0}^{0})\cap L^{q}(0;T,Y_{1}^{0})$ solving \eqref{ACP}-\eqref{ACP2}. Moreover if $u_{\max}:[0,T_{\max})\to Y_{0}^{0}$ is the maximal solution, then $u_{\max}\in C^{\infty}((0,T_{\max});Y_{1}^{k})$ for all $k\in\mathbb{N}$.
\end{thm}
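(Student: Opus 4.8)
The plan is to combine the Clément--Li short time existence result (Theorem \ref{thcl}) at level $k=0$ with an inductive bootstrapping across the two Banach scales, using the Prüss--Simonett time smoothing theorem (Theorem \ref{regularity}) at each level and the extra regularization hypothesis $F|_{Y_1^k}\in C(Y_1^k;Y_0^{k+1})$ to pass from level $k$ to level $k+1$. First I would obtain existence and uniqueness: at $k=0$ hypotheses (i) and (ii) give that $A:Y_1^0\to Y_0^0$ has maximal $L^q$-regularity and that $F$ is smooth, hence Lipschitz on bounded sets, so Theorem \ref{thcl} yields $T>0$ and a unique $u\in W^{1,q}(0,T;Y_0^0)\cap L^q(0,T;Y_1^0)$ solving \eqref{ACP}--\eqref{ACP2}; the maximal solution $u_{\max}:[0,T_{\max})\to Y_0^0$ is then defined as in the discussion preceding Corollary \ref{maximalinterval}. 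Fix $T_1<T_{\max}$; it suffices to prove $u\in C^\infty((0,T_1);Y_1^k)$ for every $k$.

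The core is an induction on $k$. The base case $k=0$: by Remark \ref{embedtoC}, $u\in C([0,T_1];(Y_1^0,Y_0^0)_{1/q,q})$, and since $F|_{(Y_1^0,Y_0^0)_{1/q,q}}\in C^\infty((Y_1^0,Y_0^0)_{1/q,q};Y_0^0)$, Theorem \ref{regularity} applied on each subinterval gives $u\in C^\infty((0,T_1);Y_1^0)$. For the inductive step, assume $u\in C^\infty((0,T_1);Y_1^k)$. Pick any $\tau\in(0,T_1)$. Then $u(\tau)\in Y_1^k$, and by hypothesis $F|_{Y_1^k}\in C(Y_1^k;Y_0^{k+1})$ together with the embedding $Y_1^k\hookrightarrow (Y_1^{k+1},Y_0^{k+1})_{1/q,q}$ shows that $u(\tau)\in (Y_1^{k+1},Y_0^{k+1})_{1/q,q}$ and that $t\mapsto F(u(t))$ is continuous into $Y_0^{k+1}$ on $[\tau,T_1)$, in particular it lies in $L^q(\tau,T_1';Y_0^{k+1})$ for every $T_1'<T_1$. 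Now consider the linear problem $v'+Av=F(u(\cdot))$ on $(\tau,T_1')$ with $v(\tau)=u(\tau)$, posed in the couple $Y_1^{k+1}\hookrightarrow Y_0^{k+1}$: by hypothesis (i) at level $k+1$, $A:Y_1^{k+1}\to Y_0^{k+1}$ has maximal $L^q$-regularity, so this problem has a unique solution $v\in W^{1,q}(\tau,T_1';Y_0^{k+1})\cap L^q(\tau,T_1';Y_1^{k+1})$. Since $Y_0^{k+1}\hookrightarrow Y_0^k$ and $Y_1^{k+1}\hookrightarrow Y_1^k$, the function $v$ also solves the same linear Cauchy problem in the couple $Y_1^k\hookrightarrow Y_0^k$, where by the induction hypothesis $u$ solves it as well; uniqueness of maximal $L^q$-regularity solutions forces $u=v$ on $[\tau,T_1')$. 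Hence $u\in W^{1,q}(\tau,T_1';Y_0^{k+1})\cap L^q(\tau,T_1';Y_1^{k+1})\hookrightarrow C([\tau,T_1'];(Y_1^{k+1},Y_0^{k+1})_{1/q,q})$ by Remark \ref{embedtoC}.

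It remains to upgrade this $W^{1,q}$-regularity at level $k+1$ to $C^\infty$ in time. For this I apply Theorem \ref{regularity} in the couple $Y_1^{k+1}\hookrightarrow Y_0^{k+1}$: by hypothesis (ii) at level $k+1$, $F|_{(Y_1^{k+1},Y_0^{k+1})_{1/q,q}}\in C^\infty((Y_1^{k+1},Y_0^{k+1})_{1/q,q};Y_0^{k+1})$, and we have just shown $u$ is a maximal-$L^q$-regularity solution on $(\tau,T_1')$ with values in this couple; therefore $u\in C^\infty((\tau,T_1');Y_1^{k+1})$. Since $\tau\in(0,T_1)$ and $T_1'<T_1$ were arbitrary, $u\in C^\infty((0,T_1);Y_1^{k+1})$, completing the induction. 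Letting $T_1\uparrow T_{\max}$ gives $u_{\max}\in C^\infty((0,T_{\max});Y_1^k)$ for all $k$. The main obstacle — and the only delicate point — is the identification $u=v$ on the higher scale via uniqueness: one must be careful that the linear solution $v$ produced at level $k+1$ genuinely coincides with the known solution at level $k$, which works precisely because the inclusions $Y_i^{k+1}\hookrightarrow Y_i^k$ make $v$ admissible as a solution at level $k$, where uniqueness is already available; restarting the evolution at a positive time $\tau$ (where $u(\tau)$ has gained a level of regularity) is what makes the bootstrap close.
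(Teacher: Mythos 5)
Your proposal is correct, and its inductive step takes a genuinely different route from the paper's. The paper restarts the \emph{nonlinear} evolution at level $k+1$ from time $t_0$ via Cl\'ement--Li (Theorem \ref{thcl}); since that only yields a solution on a possibly short interval $[t_0,\widetilde T_{\max})$, the paper must then invoke the continuation criterion of Corollary \ref{maximalinterval}: assuming $\widetilde T_{\max}<T$, uniqueness identifies $\widetilde u_{\max}$ with $u$, hypothesis (ii) gives $F(\widetilde u(\cdot))\in C([t_0,\widetilde T_{\max}];Y_0^{k+1})\subset L^q$, and the resulting contradiction forces the level-$(k+1)$ solution to live on all of $(t_0,T)$ before Pr\"uss--Simonett is applied. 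You instead freeze the right-hand side as the known inhomogeneity $g=F(u(\cdot))\in L^q(\tau,T_1';Y_0^{k+1})$ and solve the \emph{linear} Cauchy problem at level $k+1$; linear maximal $L^q$-regularity produces $v$ on the whole interval in one step, with no blow-up alternative to rule out, and the downward embeddings $Y_i^{k+1}\hookrightarrow Y_i^k$ let you identify $v$ with $u$ by uniqueness of the linear problem at level $k$. Both arguments consume exactly the same hypotheses (the trace-space embedding $Y_1^k\hookrightarrow(Y_1^{k+1},Y_0^{k+1})_{\frac1q,q}$ for the initial value at $\tau$, and $F|_{Y_1^k}\in C(Y_1^k;Y_0^{k+1})$ for the inhomogeneity), but yours bypasses Corollary \ref{maximalinterval} entirely in the induction, which is a genuine simplification. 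The only point worth making explicit is the last application of Theorem \ref{regularity}: it is stated for \emph{the unique solution} of the nonlinear problem, so after identifying $u=v$ you should note that $u$, now known to lie in the level-$(k+1)$ maximal regularity class and to satisfy $u'+Au=F(u)$ there, is by Cl\'ement--Li uniqueness at level $k+1$ that unique solution; your write-up glosses over this in one clause but the gap is cosmetic, not mathematical.
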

\begin{proof}
The assumptions for $k=0$ together with Theorem \ref{thcl} imply the existence a unique maximal solution $u_{\max}:[0,T_{\max})\to Y_{0}^{0}$ of \eqref{ACP}-\eqref{ACP2}; it satisfies 
$$
u:=u_{\max}|_{[0,T)}\in W^{1,q}(0,T;Y_{0}^{0})\cap L^{q}(0,T;Y_{1}^{0}),
$$
for each $T<T_{\max}$. Moreover, Theorem \ref{regularity} implies that $u\in C^{\infty}((0,T_{\max}),Y_{1}^{0})$.

In order to obtain higher regularity, we argue by induction. Suppose that $u\in C^{\infty}((0,T_{\max}),Y_{1}^{k})$ for some $k\in\mathbb{N}$. For each $t_{0}\in(0,T_{\max})$ we have that $u(t_{0})\in Y_{1}^{k}\overset{d}{\hookrightarrow}(Y_{1}^{k+1},Y_{0}^{k+1})_{\frac{1}{q},q}$. By Theorem \ref{thcl}, there exists $\widetilde{T}>t_{0}$ and a unique $\widetilde{u}\in W^{1,q}(t_{0},\widetilde{T};Y_{0}^{k+1})\cap L^{q}(t_{0},\widetilde{T};Y_{1}^{k+1})$ solving the equation 
\begin{eqnarray}\label{eqwrt1}
\widetilde{u}'(t)+A\widetilde{u}(t) & = & F(\widetilde{u}(t)),\quad t>t_{0},\\\label{eqwrt2}
\widetilde{u}(t_{0}) & = & u(t_{0}).
\end{eqnarray}

Let $(t_{0},\widetilde{T}_{\max})$ be the maximal interval of existence of the solution of the above equation and let $\widetilde{u}_{\max}:[t_{0},\widetilde{T}_{\max})\to Y_{0}^{k+1}$ be the maximal solution.

Suppose $\widetilde{T}_{\max}<T_{\max}$. By uniqueness of Theorem \ref{thcl} we have that $\widetilde{u}_{\max}=u|_{[t_{0},\widetilde{T}_{\max})}$ and therefore $\widetilde{u}_{\max}\in C([t_{0},\widetilde{T}_{\max}],Y_{1}^{k})$. Hence, by the assumption (ii), 
$$
F(\widetilde{u}(\cdot))\in C([t_{0},\widetilde{T}_{\max}];Y_{0}^{k+1})\hookrightarrow L^{q}(t_{0},\widetilde{T}_{\max};Y_{0}^{k+1}).
$$
Due to Corollary \ref{maximalinterval}, we conclude that $\widetilde{u}_{\max}$ can be extended to a larger interval, which is a contradiction. Therefore, $\widetilde{u}_{\max}$ is well defined for all $t\in[t_{0},T_{\max})$. Moreover, by Theorem \ref{regularity} applied to \eqref{eqwrt1}-\eqref{eqwrt2}, $\widetilde{u}_{\max}\in C^{\infty}((t_{0},T_{\max});Y_{1}^{k+1})$. Hence, $u|_{(t_{0},T_{\max})}\in C^{\infty}((t_{0},T_{\max});Y_{1}^{k+1})$. As $t_{0}>0$ is arbitrary, we conclude that $u\in C^{\infty}((0,T_{\max}),Y_{1}^{k+1})$.
\end{proof}

\section{Manifolds with conical singularities}

We study the Laplacian and bi-Laplacian on a conic manifold by regarding them as elements in the class of {\em cone differential operators}, i.e. the naturally appearing operators in such spaces. In this section we recall the main properties of these operators and of the associated function spaces. For more details we refer to \cite{GKM}, \cite{Le}, \cite{RS2}, \cite{RS3}, \cite{RS1}, \cite{SS}, \cite{Schu} and \cite{Sei}.

\subsection{Cone differential operators and Mellin-Sobolev spaces.} A cone differential operator of order $\mu\in\mathbb{N}$ is an $\mu$-th order differential operator $A$ with smooth coefficients in the interior $\mathbb{B}^{\circ}$ of $\mathbb{B}$ such that, in local coordinates $(x,y)\in(0,1)\times\partial\mathcal{B}$ on the collar part, it admits the following form 
\begin{gather}\label{Acone}
A=x^{-\mu}\sum_{k=0}^{\mu}a_{k}(x)(-x\partial_{x})^{k}, \quad \mbox{where} \quad a_{k}\in C^{\infty}([0,1);\mathrm{Diff}^{\mu-k}(\partial\mathbb{B})).
\end{gather}
In addition to the usual pseudodifferential symbol, we define the {\em rescaled symbol} of $A$ by
$$
T^{*}(\partial\mathbb{B})\times\mathbb{R} \ni (y,\xi,\tau)\mapsto\sum_{k=0}^{\mu}\sigma_{\psi}^{\mu-k}(a_{k})(0,y,\xi)(-i\tau)^{k},
$$
where $\sigma_{\psi}^{\mu-k}(a_{k})\in C^{\infty}([0,1)\times T^{*}(\partial\mathbb{B}))$ is the principal symbol of $a_{k}$ (see e.g. \cite[(2.3)]{CSS1} for more information). If it is also pointwise invertible when $\xi\neq0$ or $\tau\neq0$, then $A$ is called {\em $\mathbb{B}$-elliptic}; this is the case for the Laplacian $\Delta$ and bi-Laplacian $\Delta^2$.

Cone differential operators act naturally on scales of {\em Mellin-Sobolev} spaces. Let $\omega\in C^{\infty}(\mathbb{B})$ be a fixed cut-off function near the boundary, i.e. a smooth non-negative function on $\mathcal{B}$ with $\omega=1$ near $\{0\}\times\partial \mathcal{B}$ and $\omega=0$ on $\mathcal{B}\backslash([0,1)\times \partial \mathcal{B})$. Moreover, assume that in the local coordinates $(x,y)\in [0,1)\times \partial\mathcal{B}$, $\omega$ depends only on $x$. Denote by $C_{c}^{\infty}$ the space of smooth compactly supported functions and by $H_{p}^{s}$, $p\in(1,\infty)$, $s\in\mathbb{R}$, the usual Sobolev spaces. In addition, denote by $\mathbb{R}_{+}$ the set $(0,\infty)$.

\begin{definition}[Mellin-Sobolev spaces] \label{mellinsobolevspaces}
For any $\gamma\in\mathbb{R}$ consider the map 
$$
M_{\gamma}: C_{c}^{\infty}(\mathbb{R}_{+}\times\mathbb{R}^{n})\rightarrow C_{c}^{\infty}(\mathbb{R}^{n+1}) \quad \mbox{defined by} \quad u(x,y)\mapsto e^{(\gamma-\frac{n+1}{2})x}u(e^{-x},y). 
$$
Furthermore, take a covering $\kappa_{j}:U_{j}\subseteq\partial\mathcal{B} \rightarrow\mathbb{R}^{n}$, $j\in\{1,...,N\}$, $N\in\mathbb{N}\backslash\{0\}$, of $\partial\mathcal{B}$ by coordinate charts and let $\{\phi_{j}\}_{j\in\{1,...,N\}}$ be a subordinated partition of unity. For any $s\in\mathbb{R}$ and $p\in(1,\infty)$ let $\mathcal{H}^{s,\gamma}_p(\mathbb{B})$ be the space of all distributions $u$ on $\mathbb{B}^{\circ}$ such that 
\begin{equation}
\label{mellinsobolev}
\|u\|_{\mathcal{H}^{s,\gamma}_p(\mathbb{B})}=\sum_{j=1}^{N}\|M_{\gamma}(1\otimes \kappa_{j})_{\ast}(\omega\phi_{j} u)\|_{H^{s}_p(\mathbb{R}^{n+1})}+\|(1-\omega)u\|_{H^{s}_p(\mathbb{B})}
\end{equation}
is defined and finite, where $\ast$ refers to the push-forward of distributions. The space $\mathcal{H}^{s,\gamma}_p(\mathbb{B})$, called {\em (weighted) Mellin-Sobolev space}, is independent of the choice of the cut-off function $\omega$, the covering $\{\kappa_{j}\}_{j\in\{1,...,N\}}$ and the partition $\{\phi_{j}\}_{j\in\{1,...,N\}}$; if $A$ is as in \eqref{Acone}, then it induces a bounded map
$$
A: \mathcal{H}^{s+\mu,\gamma+\mu}_p(\mathbb{B}) \rightarrow \mathcal{H}^{s,\gamma}_p(\mathbb{B}).
$$
Finally, if $s\in \mathbb{N}$, then equivalently, $\mathcal{H}^{s,\gamma}_p(\mathbb{B})$ is the space of all functions $u$ in $H^s_{p,loc}(\mathbb{B}^\circ)$ such that near the boundary
\begin{equation}\label{mellinsobolevinteger}
x^{\frac{n+1}2-\gamma}(x\partial_x)^{k}\partial_y^{\alpha}(\omega(x) u(x,y)) \in L^{p}([0,1)\times \partial \mathcal{B}, \sqrt{\mathrm{det}[h(x)]}\frac{dx}xdy),\quad k+|\alpha|\le s.
\end{equation}
\end{definition}

Next we collect some elementary properties of Mellin-Sobolev spaces.

\begin{lemma}\label{propms}
{\em (i)} $\mathcal{H}_{p}^{s,\gamma}(\mathbb{B})$, $p\in(1,\infty)$, $s,\gamma\in\mathbb{R}$, is a UMD Banach space; in particular, it is a Hilbert space if $p=2$. Moreover, $L^{p}(\mathbb{B})=\mathcal{H}_{p}^{0,(n+1)(\frac{1}{2}-\frac{1}{p})}(\mathbb{B})$, where $L^{p}(\mathbb{B})$ is defined by using the Riemannian measure $d\mu_{g}$ induced by the conic metric $g$.\\
{\em (ii)} For every $p\in(1,\infty)$ and $s,\gamma\in\mathbb{R}$, the bilinear map 
$$
\langle \cdot,\cdot\rangle :C^{\infty}_{c}(\mathbb{B}^{\circ})\times C^{\infty}_{c}(\mathbb{B}^{\circ})\to\mathbb{C} \quad \text{given by}\quad \langle w,v\rangle =\int_{\mathbb{B}}wvd\mu_{g}
$$ 
has a unique extension to a continuous bilinear map 
$$
\langle \cdot,\cdot\rangle _{\mathcal{H}_{p}^{s,\gamma}\times\mathcal{H}_{q}^{-s,-\gamma}}:\mathcal{H}_{p}^{s,\gamma}(\mathbb{B})\times\mathcal{H}_{q}^{-s,-\gamma}(\mathbb{B})\to\mathbb{C},\quad \frac{1}{p}+\frac{1}{q}=1.
$$
This map, called duality map, allows the identification of the dual of $\mathcal{H}_{p}^{s,\gamma}(\mathbb{B})$ with $\mathcal{H}_{q}^{-s,-\gamma}(\mathbb{B})$ in a standard
way.\\
{\em (iii)} If $u\in\mathcal{H}_{p}^{s,\gamma}(\mathbb{B})$ with $p\in(1,\infty)$, $s>\frac{n+1}{p}$ and $\gamma\in\mathbb{R}$, then $u\in C(\mathbb{B}^{\circ})$ and in local coordinates on the collar part we have
$$
|u(x,y)|\le Cx^{\gamma-\frac{n+1}{2}}\Vert u\Vert _{\mathcal{H}_{p}^{s,\gamma}(\mathbb{B})}, \quad (x,y)\in(0,1)\times\partial\mathcal{B},
$$
with some constant $C>0$ that depends only on $\mathbb{B}$ and $p$. Moreover, we denote 
$$
\mathcal{H}_{p}^{\infty,\gamma}(\mathbb{B})=\bigcap_{\nu\ge0}\mathcal{H}_{p}^{\nu,\gamma}(\mathbb{B})\subset C^{\infty}(\mathbb{B}^{\circ}).
$$
\\
{\em (iv)} If $q\ge p$, $s\ge t+(n+1)(\frac{1}{p}-\frac{1}{q})$ and $\gamma_{1}\ge\gamma_{2}$, then $\mathcal{H}_{p}^{s,\gamma_{1}}(\mathbb{B})\hookrightarrow\mathcal{H}_{q}^{t,\gamma_{2}}(\mathbb{B})$.
\\
{\em (v)} If $q\le p$, $s\ge t\ge 0$ and $\gamma_{1}>\gamma_{2}$, then $\mathcal{H}_{p}^{s,\gamma_{1}}(\mathbb{B})\hookrightarrow\mathcal{H}_{q}^{t,\gamma_{2}}(\mathbb{B})$.
\\
{\em (vi)} For any $p,q\in(1,\infty)$ and $\gamma\in\mathbb{R}$, we have that 
\begin{equation}\label{intersectspace}
\bigcap_{\varepsilon>0}\mathcal{H}_{p}^{\infty,\gamma-\varepsilon}(\mathbb{B})=\bigcap_{\varepsilon>0}\mathcal{H}_{q}^{\infty,\gamma-\varepsilon}(\mathbb{B}).
\end{equation}
In addition,
$$
\mathcal{C}^{\infty,\gamma-}(\mathbb{B}):=\bigcap_{\varepsilon>0}\mathcal{H}_{p}^{\infty,\gamma-\varepsilon}(\mathbb{B}),
$$
coincides with the space of all functions $u\in C^{\infty}(\mathbb{B})$ such that
\begin{equation}\label{flatspace}
\sup_{(x,y)\in(0,1)\times\partial\mathbb{B}}|x^{\frac{n+1}{2}-\gamma+\varepsilon}(x\partial_{x})^{k}\partial_{y}^{\alpha}u(x,y)|<\infty, 
\end{equation}
for all $k$, $\alpha$ and $\varepsilon>0$.
\end{lemma}

\begin{proof}
The properties can be proved by localization using \eqref{mellinsobolev}. Parts (i), (ii) and (iv) are stated in \cite{RS4} and \cite{Sei}; part (iii) is \cite[Corollary 2.5]{RS1}.

Part (v) is probably well known, although we could not find it explicitly in the literature. We give here a proof for the convenience of the reader. Let us prove it by analyzing the terms defining the norm \eqref{mellinsobolev}. We first focus on $\|(1-\omega)u\|_{H_{p}^{s}(\mathbb{B})}$.

Using H\"older inequality and the fact that $(1-\omega)u$ has compact support, we conclude that there is a constant $C_{1}>0$ such
that 
\begin{equation}
\|(1-\omega)u\|_{H_{q}^{t}(\mathbb{B})}\le C_{1}\|(1-\omega)u\|_{H_{p}^{s}(\mathbb{B})},\quad u\in\mathcal{H}_{p}^{s,\gamma_{1}}(\mathbb{B}),\label{eq:First term}
\end{equation}
whenever $s=t\in\mathbb{N}$. The inequality \eqref{eq:First term}
for all $s=t\ge0$ follows by complex interpolation. Finally, for
$s\ge t\ge0$, we use the Sobolev inclusion $H_{q}^{s}\hookrightarrow H_{q}^{t}$.

In order to study the term $\|M_{\gamma_{2}}(1\otimes\kappa_{i})_{\ast}(\omega\phi_{i}u)\|_{H_{q}^{t}(\mathbb{R}^{n+1})}$,
we write it in local coordinates 
$$
e^{(\gamma_{2}-\frac{n+1}{2})x}u(e^{-x},y)\phi_{i}(y)\omega(e^{-x})=e^{(\gamma_{2}-\gamma_{1})x}e^{(\gamma_{1}-\frac{n+1}{2})x}u(e^{-x},y)\phi_{i}(y)\omega(e^{-x}).
$$
Due to the cut-off function $\omega$, we can consider only $x>0$. In this
case, the function $(0,\infty)\ni x\mapsto e^{(\gamma_{2}-\gamma_{1})x}$
and all its derivatives are bounded. Moreover this function and its
derivatives belong to $L^{r}(0,\infty)$ for
all $1<r<\infty$. Under these considerations, from complex interpolation and H\"older inequality we conclude that, if $p\ge q$ and $s\ge t\ge 0$, then there is a $C_{2}>0$ such that 
\begin{equation}
\|M_{\gamma_{2}}(1\otimes\kappa_{i})_{\ast}(\omega\phi_{i}u)\|_{H_{q}^{t}(\mathbb{R}^{n+1})}\le C_{2}\|M_{\gamma_{1}}(1\otimes\kappa_{i})_{\ast}(\omega\phi_{i}u)\|_{H_{p}^{s}(\mathbb{R}^{n+1})},\quad u\in\mathcal{H}_{p}^{s,\gamma_{1}}(\mathbb{B}).\label{eq:second term}
\end{equation}
This concludes the proof. 

We note that part (iv) stated in \cite{Sei} can be proved in a similar way by using the fact that $H_{p}^{s}(\mathbb{R}^{n+1})\hookrightarrow H_{q}^{t}(\mathbb{R}^{n+1})$
whenever $s\ge t+(n+1)(\frac{1}{p}-\frac{1}{q})$ and $p\le q$, see \cite[Theorem 6.5.1]{BL}.

Part (vi) is a simple consequence of parts (iv) and (v). In fact, parts (iv) and (v) imply that, for any $s>0$ and $\varepsilon>0$, we have
$$
\mathcal{H}_{p}^{\infty,\gamma-\frac{\varepsilon}{2}}(\mathbb{B})\subseteq\mathcal{H}_{p}^{\max\{s,s+(n+1)(\frac{1}{p}-\frac{1}{q})\},\gamma-\frac{\varepsilon}{2}}(\mathbb{B})\subseteq\mathcal{H}_{q}^{s,\gamma-\varepsilon}(\mathbb{B}).
$$
After taking the intersection first in $s$ and then in $\varepsilon$, we obtain 
$$
\bigcap_{\varepsilon>0}\mathcal{H}_{p}^{\infty,\gamma-\varepsilon}(\mathbb{B})\subseteq\bigcap_{\varepsilon>0}\mathcal{H}_{q}^{\infty,\gamma-\varepsilon}(\mathbb{B}).
$$
As $p$ and $q$ are arbitrary numbers in $(1,\infty)$, we can change their roles in the above argument and obtain \eqref{intersectspace}. Finally, \eqref{flatspace} follows by (iii), \eqref{intersectspace} and the discussion in \cite[Section 2.3]{Sei}.
\end{proof}

Concerning the interpolation of Mellin-Sobolev spaces, the following properties hold, where, for $\xi\in(0,1)$, we denote by $[\cdot,\cdot]_{\xi}$ the complex interpolation functor.

\begin{lemma}\label{prop:interpolation}
Let $s_{0}, s_{1}, \gamma_{0}, \gamma_{1}\in \mathbb{R}$, $\theta\in(0,1)$, $q\in(1,\infty)$ and denote $s=(1-\theta)s_{0}+\theta s_{1}$, $\gamma=(1-\theta)\gamma_{0}+\theta\gamma_{1}$. Then, for any $\varepsilon_{1}$, $\varepsilon_{2}>0$ we have:\\
{\em (i)} 
$$
(\mathcal{H}_{p}^{s_{0},\gamma_{0}}(\mathbb{B}),\mathcal{H}_{p}^{s_{1},\gamma_{1}}(\mathbb{B}))_{\theta,q}\hookrightarrow\bigg\{ \begin{array}{lll}
\mathcal{H}_{p}^{s,\gamma-\varepsilon_{2}}(\mathbb{B}) & \text{if} & q\le2\\
\mathcal{H}_{p}^{s-\varepsilon_{1},\gamma-\varepsilon_{2}}(\mathbb{B}) & \text{if} &q>2.
\end{array}
$$\\
{\em (ii)} $\mathcal{H}_{p}^{s+\varepsilon_{1},\gamma+\varepsilon_{2}}(\mathbb{B})\hookrightarrow(\mathcal{H}_{p}^{s_{0},\gamma_{0}}(\mathbb{B}),\mathcal{H}_{p}^{s_{1},\gamma_{1}}(\mathbb{B}))_{\theta,q}$.\\
{\em (iii)} $[\mathcal{H}_{p}^{s_{0},\gamma_{0}}(\mathbb{B}),\mathcal{H}_{p}^{s_{1},\gamma_{1}}(\mathbb{B})]_{\theta}=\mathcal{H}_{p}^{s,\gamma}(\mathbb{B})$,
whenever $s_{1}\ge s_{0}$ and $\gamma_{1}\ge\gamma_{0}$.
\end{lemma}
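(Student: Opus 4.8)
The plan is to reduce all three parts, by localization, to the analogous statements for the Bessel potential spaces $H^s_p(\mathbb{R}^{n+1})$ and for $H^s_p$ of a closed manifold, where everything is classical (see \cite{BL}). For a fixed weight $\gamma$, the norm \eqref{mellinsobolev} presents $\mathcal{H}^{s,\gamma}_p(\mathbb{B})$, uniformly in $s$, as a retract of a finite direct sum $\bigoplus_i H^s_p(\mathbb{R}^{n+1})\oplus H^s_p(\text{closed manifold})$, the retraction and coretraction being assembled from $\omega$, the $\phi_i$, the charts $\kappa_i$ and the map $M_\gamma$, and all independent of $s$. Since a retract of an interpolation couple is a retract of the interpolated spaces, for equal weights $\gamma_0=\gamma_1=:\gamma$ the three claims follow at once: part 3 from the exactness $[H^{s_0}_p,H^{s_1}_p]_\theta=H^s_p$; parts 1 and 2 from $(H^{s_0}_p,H^{s_1}_p)_{\theta,q}=B^s_{p,q}$, together with the standard embeddings $H^{s+\varepsilon_1}_p\hookrightarrow B^s_{p,q}\hookrightarrow H^{s-\varepsilon_1}_p$ and $B^s_{p,q}\hookrightarrow H^s_p$ (valid for $q\le 2$ in the relevant range of $p$; in particular $B^s_{2,2}=H^s_2$). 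The closed-manifold summand is $\gamma$-free and never causes difficulty.

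To pass to unequal weights I would first prove part 3 in general. The key tool is the $s$-independent isomorphism $\mathcal{H}^{s,\gamma}_p(\mathbb{B})\xrightarrow{\sim}\mathcal{H}^{s,\gamma+\delta}_p(\mathbb{B})$ given by multiplication by a smooth positive function equal to $x^\delta$ near the conical tips, which is immediate from the relation $M_{\gamma+\delta}\circ(x^\delta\,\cdot\,)=M_\gamma$ and the boundedness on $\mathcal{H}^{s,\gamma}_p(\mathbb{B})$ of multiplication by functions with bounded conormal derivatives $(x\partial_x)^k\partial_y^\alpha$. I would then apply Stein's interpolation theorem for analytic families of operators to the family $z\mapsto\big(\text{multiplication by }x^{(\gamma_1-\gamma_0)(\theta-z)}\text{ near the tips, the identity away from them}\big)$: on the lines $\mathrm{Re}\,z\in\{0,1\}$ these are conormal multipliers mapping $\mathcal{H}^{s_0,\gamma_0}_p\to\mathcal{H}^{s_0,\gamma}_p$, respectively $\mathcal{H}^{s_1,\gamma_1}_p\to\mathcal{H}^{s_1,\gamma}_p$, with operator norm of at most polynomial growth in $\mathrm{Im}\,z$, hence admissible; at $z=\theta$ the family is the identity, so it sends $[\mathcal{H}^{s_0,\gamma_0}_p,\mathcal{H}^{s_1,\gamma_1}_p]_\theta$ into $[\mathcal{H}^{s_0,\gamma}_p,\mathcal{H}^{s_1,\gamma}_p]_\theta=\mathcal{H}^{s,\gamma}_p$. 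The inverse family gives the reverse inclusion, completing part 3.

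Parts 1 and 2 for unequal weights would then be deduced from part 3 by standard interpolation-space manipulations with auxiliary exponents $\theta_0<\theta<\theta_1$ arbitrarily close to $\theta$. For part 1: the reiteration theorem (see \cite{BL}) gives $(\mathcal{H}^{s_0,\gamma_0}_p,\mathcal{H}^{s_1,\gamma_1}_p)_{\theta,q}=\big([\mathcal{H}^{s_0,\gamma_0}_p,\mathcal{H}^{s_1,\gamma_1}_p]_{\theta_0},[\mathcal{H}^{s_0,\gamma_0}_p,\mathcal{H}^{s_1,\gamma_1}_p]_{\theta_1}\big)_{\lambda,q}$ with $\theta=(1-\lambda)\theta_0+\lambda\theta_1$, and by part 3 the two inner spaces equal $\mathcal{H}^{\sigma_j,\tau_j}_p$ with $(\sigma_j,\tau_j)\to(s,\gamma)$; since $(A_0,A_1)_{\lambda,q}\hookrightarrow A_0+A_1\hookrightarrow\mathcal{H}^{\min(\sigma_0,\sigma_1),\min(\tau_0,\tau_1)}_p$ (using Lemma \ref{propms}), letting $\theta_0,\theta_1\to\theta$ yields the embedding into $\mathcal{H}^{s-\varepsilon_1,\gamma-\varepsilon_2}_p$. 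For part 2: again by Lemma \ref{propms} and part 3, for $\theta_j$ close enough to $\theta$ one has $\mathcal{H}^{s+\varepsilon_1,\gamma+\varepsilon_2}_p\hookrightarrow\mathcal{H}^{\sigma_j,\tau_j}_p=[\mathcal{H}^{s_0,\gamma_0}_p,\mathcal{H}^{s_1,\gamma_1}_p]_{\theta_j}\hookrightarrow(\mathcal{H}^{s_0,\gamma_0}_p,\mathcal{H}^{s_1,\gamma_1}_p)_{\theta_j,\infty}$ for $j=0,1$, and the elementary inclusion $(A_0,A_1)_{\theta_0,\infty}\cap(A_0,A_1)_{\theta_1,\infty}\hookrightarrow(A_0,A_1)_{\theta,q}$ for $\theta\in(\theta_0,\theta_1)$ concludes. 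For the sharper $q\le 2$ statement in part 1 (no loss in $s$) one uses instead $B^s_{p,q}\hookrightarrow H^s_p$; for $p=q=2$ this reduces directly to part 3, the endpoints being Hilbert spaces.

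The step I expect to be the main obstacle is the control of the weight through the real interpolation functor when $\gamma_0\ne\gamma_1$: after flattening, one faces couples $\big(H^{s_0}_p(\mathbb{R}^{n+1}),e^{\alpha x}H^{s_1}_p(\mathbb{R}^{n+1})\big)$ with $\alpha\ne 0$, whose real interpolation spaces are not weighted Besov spaces on the nose—they acquire a Lorentz-type correction unless $q=p$—so the best one obtains is a sandwich between two Mellin--Sobolev spaces whose weights differ by an arbitrarily small amount. This is precisely the origin of the losses $\varepsilon_1,\varepsilon_2$ in the statement, and they cannot be dispensed with in general; for $p=q=2$ the couples are Hilbertian and part 3 already gives the loss-free conclusion.
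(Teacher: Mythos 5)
Your argument is essentially correct, and for part 3 it rests on the same key object as the paper's proof: the holomorphic family of weight\--shifting multiplication operators $z\mapsto\mathsf{x}^{\mu z}$ together with the polynomial bound $(1+|z|)^{|\widetilde{s}|}$ on its operator norm (the paper's estimate \eqref{eq:estxi}). The packaging differs, though. The paper obtains $\mathcal{H}^{s,\gamma}_p(\mathbb{B})\hookrightarrow[\mathcal{H}^{s_0,\gamma_0}_p(\mathbb{B}),\mathcal{H}^{s_1,\gamma_1}_p(\mathbb{B})]_\theta$ by exhibiting admissible functions $f=\mathsf{x}^{(\gamma_1-\gamma_0)z}(1+z)^{-k}w$ directly in the defining class $\mathcal{A}$ of the complex method, and then gets the reverse inclusion by duality (reflexivity of the Mellin--Sobolev spaces plus the duality theorem for $[\cdot,\cdot]_\theta$ from \cite{Trib}). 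Your two-sided Stein-interpolation argument with the family $x^{(\gamma_1-\gamma_0)(\theta-z)}$ and its inverse reaches the same equality more symmetrically and dispenses with the duality step; what it buys in brevity it pays for in having to verify the hypotheses of an abstract Stein theorem (analyticity on the dense subspace $C_c^\infty(\mathbb{B}^\circ)$, admissible growth on the boundary lines), which do hold here by the same estimate. For parts 1 and 2 the paper simply cites \cite[Lemma 5.4]{CSS1} and \cite[Lemma 3.6]{RS3}, whereas you derive them from part 3 by reiteration with auxiliary exponents $\theta_0<\theta<\theta_1$; your derivations of part 2 and of the $q>2$ case of part 1 are sound and have the virtue of making the lemma self-contained, and your closing diagnosis of where the $\varepsilon$-losses come from is accurate.

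One point deserves care: the $q\le 2$ branch of part 1. Your reiteration route unavoidably loses an $\varepsilon_1$ in the smoothness index, so it does not by itself give the stated loss-free-in-$s$ embedding; and the repair you propose, $B^s_{p,q}\hookrightarrow H^s_p$, holds only for $q\le\min(p,2)$, so your parenthetical restriction on $p$ is not cosmetic. As stated for all $p\in(1,\infty)$, that branch really does lean on the cited \cite[Lemma 5.4]{CSS1} (and in the paper's applications it is only invoked with $q>2$ or with $p=q=2$, where, as you note, everything reduces to part 3 because the endpoint spaces are Hilbertian). So I would either add the hypothesis $q\le\min(p,2)$ to that branch of your write-up or fall back on the reference for it; otherwise the proposal stands.
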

\begin{proof}
Part (i) was proved by \cite[Lemma 5.4]{CSS1}. Part (ii) can be proved similarly and was stated in \cite[Lemma 3.6]{RS3}. 

Concerning part (ii), the case of $\gamma_{0}=\gamma_{1}$ was proved by \cite[Lemma 3.7]{RS3}. Assume that $\gamma_{1}>\gamma_{0}$. Let $X_{0}$ and $X_{1}$ be Banach spaces such that $X_{1}$ is densely and continuously immersed into $X_{0}$, i.e. $X_{1}\overset{d}{\hookrightarrow}X_{0}$. We denote by $\mathcal{A}(X_{0},X_{1})$ the set of all bounded continuous functions $f:Z=\{ \lambda\in\mathbb{C}\,|\,0\leq\mathrm{Re}(\lambda)\leq1\} \rightarrow X_{0}$
that are analytic in the interior $Z^{\circ}$ of $Z$, that satisfy $f(1+it)\in X_{1}$, $t\in\mathbb{R}$, and define a continuous and bounded map
$\mathbb{R}\ni t\mapsto f(1+it)\in X_{1}$.

Let $(x,y)\in[0,1)\times \partial\mathcal{B}$ be local coordinates on the collar part of $\mathbb{B}$. Fix a $C^{\infty}(\mathbb{B}^{\circ})$-function
$\mathsf{x}$ that is equal to $x$ on $[0,1/2)\times\partial\mathcal{B}$
and satisfies $\mathsf{x}\geq1/4$ on $\mathcal{B}\backslash([0,1/2)\times\partial\mathcal{B})$.
Clearly, for $\mu>0$, $\widetilde{p}\in(1,\infty)$ and $\widetilde{s}, \widetilde{\gamma}\in\mathbb{R}$, the multiplication function
$\mathcal{H}_{\widetilde{p}}^{\widetilde{s},\widetilde{\gamma}}(\mathbb{B})\ni u\mapsto\mathsf{x}^{\mu z}u\in\mathcal{H}_{\widetilde{p}}^{\widetilde{s},\widetilde{\gamma}}(\mathbb{B})$,
denoted by $\mathsf{x}^{\mu z}$, defines an analytic map $\{ w\in\mathbb{C}\, |\,\text{Re}(w)>0\} \ni z\mapsto\mathsf{x}^{\mu z}\in\mathcal{L}(\mathcal{H}_{\widetilde{p}}^{\widetilde{s},\widetilde{\gamma}}(\mathbb{B}))$. Indeed, for each $z,h\in \mathbb{C}$ with $\mathrm{Re}(z)>0$ and $|h|$ sufficiently small, by \cite[Corollary 3.3]{RS3} we have
$$
\|\big(\frac{\mathsf{x}^{\mu (z+h)}-\mathsf{x}^{\mu z}}{h}-\mu\ln(x)\mathsf{x}^{\mu z}\big)u\|_{\mathcal{H}_{\widetilde{p}}^{\widetilde{s},\widetilde{\gamma}}(\mathbb{B})}\leq C_{1}\|\mathsf{x}^{\mu z}\big(\frac{\mathsf{x}^{\mu h}-1}{h}-\mu\ln(x)\big)\|_{\mathcal{H}_{\widetilde{p}}^{k,(n+1)/2}(\mathbb{B})}\|u\|_{\mathcal{H}_{\widetilde{p}}^{\widetilde{s},\widetilde{\gamma}}(\mathbb{B})},
$$
for certain $C_{1}>0$, where $k\in\mathbb{N}$ is fixed and sufficiently large. By the dominated convergence theorem, the first term on the right hand side of the above inequality tends to zero as $h\rightarrow0$. This shows the claim. 
 
Moreover, if $u\in\mathcal{H}_{\widetilde{p}}^{\widetilde{s},\widetilde{\gamma}}(\mathbb{B})$ then the following estimate holds
\begin{equation}
\Vert \mathsf{x}^{\mu z}u\Vert _{\mathcal{H}_{\widetilde{p}}^{\widetilde{s},\widetilde{\gamma}+\mu\mathrm{Re}(z)}(\mathbb{B})}\le C_{2}(1+|z|)^{|\widetilde{s}|}\Vert u\Vert _{\mathcal{H}_{\widetilde{p}}^{\widetilde{s},\widetilde{\gamma}}(\mathbb{B})},\quad\text{Re}(z)\ge0,\label{eq:estxi}
\end{equation}
for certain $C_{2}>0$. Inequality \eqref{eq:estxi} is clear for $\widetilde{s}\in\mathbb{N}$ due to \eqref{mellinsobolev} and \eqref{mellinsobolevinteger}. As we have seen, the third part of the lemma holds when $\gamma_{0}=\gamma_{1}$. Therefore \eqref{eq:estxi} holds for every $\widetilde{s}\ge0$ by complex interpolation and for all $\widetilde{s}\in\mathbb{R}$ by duality due to Lemma \ref{propms} (ii).
 
Similarly, if $t,\ell\in\mathbb{R}$, then for any $j\in\{0,1\}$ the term
$$
\|(\mathsf{x}^{\mu (j+i(t+\ell))}-\mathsf{x}^{\mu(j+it)})u\|_{\mathcal{H}_{\widetilde{p}}^{\widetilde{s},\widetilde{\gamma}+j\mu}(\mathbb{B})}=\|(\mathsf{x}^{i\mu\ell}-1)\mathsf{x}^{\mu (j+it)}u\|_{\mathcal{H}_{\widetilde{p}}^{\widetilde{s},\widetilde{\gamma}+j\mu}(\mathbb{B})}
$$
 tends to zero as $\ell\rightarrow0$. Indeed, when $\widetilde{s}\in\mathbb{N}$, the above follows by the dominated convergence theorem. Again by interpolation and duality, we obtain the result for any $s\in\mathbb{R}$. This shows that for any $k>|\widetilde{s}|$ and $j\in\{0,1\}$, the map
$$
\mathbb{R}\ni t \mapsto\frac{\mathsf{x}^{\mu (j+it)}}{(1+j+it)^{k}}\in\mathcal{L}_{s}(\mathcal{H}_{\widetilde{p}}^{\widetilde{s},\widetilde{\gamma}}(\mathbb{B}),\mathcal{H}_{\widetilde{p}}^{\widetilde{s},\widetilde{\gamma}+j\mu}(\mathbb{B}))
$$
is continuous and bounded.

As a consequence of the previous remarks, for $k>\max\{ |s_{0}|,|s_{1}|\}$ we have the following inclusion
\begin{eqnarray}\nonumber
 \lefteqn{[\mathcal{H}_{p}^{s_{0},\gamma_{0}}(\mathbb{B}),\mathcal{H}_{p}^{s_{1},\gamma_{1}}(\mathbb{B})]_{\theta}}\\\nonumber
 & = & \{ f(\theta)\,|\,f\in\mathcal{A}(\mathcal{H}_{p}^{s_{0},\gamma_{0}}(\mathbb{B}),\mathcal{H}_{p}^{s_{1},\gamma_{1}}(\mathbb{B}))\} \\\nonumber
 & \hookleftarrow & \{ f(\theta)\,|\,f=\frac{\mathsf{x}^{(\gamma_{1}-\gamma_{0})z}}{(1+z)^{k}}w,\,w\in\mathcal{A}(\mathcal{H}_{p}^{s_{0},\gamma_{0}}(\mathbb{B}),\mathcal{H}_{p}^{s_{1},\gamma_{0}}(\mathbb{B}))\} \\\nonumber
 & = & \frac{\mathsf{x}^{(\gamma_{1}-\gamma_{0})\theta}}{(1+\theta)^{k}}\{ w(\theta)\,|\,w\in\mathcal{A}(\mathcal{H}_{p}^{s_{0},\gamma_{0}}(\mathbb{B}),\mathcal{H}_{p}^{s_{1},\gamma_{0}}(\mathbb{B}))\} \\\nonumber
 & = & \frac{\mathsf{x}^{(\gamma_{1}-\gamma_{0})\theta}}{(1+\theta)^{k}}[\mathcal{H}_{p}^{s_{0},\gamma_{0}}(\mathbb{B}),\mathcal{H}_{p}^{s_{1},\gamma_{0}}(\mathbb{B})]_{\theta} \\\label{leftemb}
 & = & \frac{\mathsf{x}^{(\gamma_{1}-\gamma_{0})\theta}}{(1+\theta)^{k}}\mathcal{H}_{p}^{s,\gamma_{0}}(\mathbb{B})=\mathcal{H}_{p}^{s,\gamma}(\mathbb{B}),
\end{eqnarray}
 where we have used \cite[Lemma 3.7]{RS3}.

By Lemma \ref{propms} (ii), the Mellin-Sobolev
spaces are reflexive. Since $C_{c}^{\infty}(\mathbb{B}^{\circ})$
is dense in both $\mathcal{H}_{p}^{s_{0},\gamma_{0}}(\mathbb{B})$
and $\mathcal{H}_{p}^{s_{1},\gamma_{1}}(\mathbb{B})$,
by \eqref{leftemb} and \cite[Theorems
1.9.3 (b) and 1.11.3]{Trib}
we obtain, for any $q\in(1,\infty)$, $p$ such that $\frac{1}{q}+\frac{1}{p}=1$, $\gamma_{1}>\gamma_{0}$
and $s_{1}\ge s_{0}$, that
\begin{eqnarray*}
\lefteqn{[\mathcal{H}_{q}^{-s_{1},-\gamma_{1}}(\mathbb{B}),\mathcal{H}_{q}^{-s_{0},-\gamma_{0}}(\mathbb{B})]_{1-\theta}	=	[\mathcal{H}_{q}^{-s_{0},-\gamma_{0}}(\mathbb{B}),\mathcal{H}_{q}^{-s_{1},-\gamma_{1}}(\mathbb{B})]_{\theta}}\\
&	\cong &	[\mathcal{H}_{p}^{s_{0},\gamma_{0}}(\mathbb{B})',\mathcal{H}_{p}^{s_{1},\gamma_{1}}(\mathbb{B})']_{\theta}\cong[\mathcal{H}_{p}^{s_{0},\gamma_{0}}(\mathbb{B}),\mathcal{H}_{p}^{s_{1},\gamma_{1}}(\mathbb{B})]_{\theta}'\\
& \hookrightarrow &	\mathcal{H}_{p}^{s,\gamma}(\mathbb{B})'\cong\mathcal{H}_{q}^{-s,-\gamma}(\mathbb{B}),
\end{eqnarray*}
where by $\cong$ we mean norm equivalence. In particular, performing the changes $q\mapsto\widetilde{p}$, $-s_{1}\mapsto\widetilde{s}_{0}$,
$-s_{0}\mapsto\widetilde{s}_{1}$, $-\gamma_{1}\mapsto\widetilde{\gamma}_{0}$,
$-\gamma_{0}\mapsto\widetilde{\gamma}_{1}$ and $\theta\mapsto1-\widetilde{\theta}$,
we have that $\widetilde{\gamma}_{1}=-\gamma_{0}>-\gamma_{1}=\widetilde{\gamma}_{0}$,
$\widetilde{s}_{1}=-s_{0}\ge-s_{1}=\widetilde{s}_{0}$ and 
\begin{equation}
[\mathcal{H}_{\widetilde{p}}^{\widetilde{s}_{0},\widetilde{\gamma}_{0}}(\mathbb{B}),\mathcal{H}_{\widetilde{p}}^{\widetilde{s}_{1},\widetilde{\gamma}_{1}}(\mathbb{B})]_{\widetilde{\theta}}\hookrightarrow\mathcal{H}_{\widetilde{p}}^{\widetilde{s},\widetilde{\gamma}}(\mathbb{B}),\label{rightem}
\end{equation}
for $\widetilde{s}=(1-\widetilde{\theta})\widetilde{s_0}+\widetilde{\theta}\widetilde{s_1}$ and $\widetilde{\gamma}=(1-\widetilde{\theta})\widetilde{\gamma_0}+\widetilde{\theta}\widetilde{\gamma_1}$.

As $\widetilde{p}$, $\widetilde{s}_{0}$, $\widetilde{s}_{1}$, $\widetilde{\gamma}_{0}$
and $\widetilde{\gamma}_{1}$, with $\widetilde{\gamma}_{1}>\widetilde{\gamma}_{0}$
and $\widetilde{s}_{1}\ge\widetilde{s}_{0}$, are arbitrary, the embeddings 
\eqref{leftemb} and \eqref{rightem}
complete the proof.
\end{proof}

\subsection{Realizations of the Laplacian on conic manifolds}\label{realizations}

We focus now on the Laplacian $\Delta$ and regard it as an unbounded operator in $\mathcal{H}^{s,\gamma}_p(\mathbb{B})$, $p\in(1,\infty)$, $s,\gamma\in\mathbb{R}$, with domain $C_{c}^{\infty}(\mathbb{B}^{\circ})$. The domain of its minimal extension (i.e. its closure) $\underline{\Delta}_{s,\min}$ is given by 
$$
\mathcal{D}(\underline{\Delta}_{s,\min})=\Big\{u\in \bigcap_{\varepsilon>0}\mathcal{H}^{s+2,\gamma+2-\varepsilon}_p(\mathbb{B}) \, |\, \Delta u\in \mathcal{H}^{s,\gamma}_p(\mathbb{B})\Big\}.
$$ 
In particular
$$
\mathcal{H}_{p}^{s+2,\gamma+2}(\mathbb{B})\hookrightarrow \mathcal{D}(\underline{\Delta}_{s,\min}) \hookrightarrow \mathcal{H}_{p}^{s+2,\gamma+2-\varepsilon}(\mathbb{B})
$$
for all $\varepsilon>0$, and if
\begin{gather}\label{Qset}
Q_{\Delta}\cap \Big\{z\in\mathbb{C}\, |\, \mathrm{Re}(z)=\frac{n-3}{2}-\gamma\Big\}=\emptyset
\end{gather}
then 
$$
\mathcal{D}(\underline{\Delta}_{s,\min}) = \mathcal{H}_{p}^{s+2,\gamma+2}(\mathbb{B}),
$$
where
$$
Q_{\Delta}=\bigcup_{\lambda_{j}\in\sigma(\Delta_{h(0)})}\Big\{\frac{n-1}{2}\pm\sqrt{\Big(\frac{n-1}{2}\Big)^{2}-\lambda_{j}}\Big\}.
$$
The set of points $Q_{\Delta}$ coincides with the poles of the inverse of the family of differential operators
$$
\mathbb{C}\ni\lambda\mapsto\lambda^{2}-(n-1)\lambda+\Delta_{h(0)}\in\mathcal{L}(H_{2}^{2}(\partial\mathbb{B}),H_{2}^{0}(\partial\mathbb{B})),
$$
which is called {\em conormal symbol} of $\Delta$.

The domain of the maximal extension $\underline{\Delta}_{s,\max}$ of $\Delta$, defined by
\begin{gather*}
\mathcal{D}(\underline{\Delta}_{s,\max})=\Big\{u\in\mathcal{H}^{s,\gamma}_p(\mathbb{B}) \, |\, \Delta u\in \mathcal{H}^{s,\gamma}_p(\mathbb{B})\Big\},
\end{gather*}
is expressed as
\begin{gather}\label{dmax1}
\mathcal{D}(\underline{\Delta}_{s,\max})=\mathcal{D}(\underline{\Delta}_{s,\min})\oplus\mathcal{E}_{\Delta,\gamma}.
\end{gather}
Here $\mathcal{E}_{\Delta,\gamma}$ is a finite-dimensional $s$-independent space that is called {\em asymptotics space} and consists of linear combinations of $C^{\infty}(\mathbb{B}^\circ)$ functions that vanish on $\mathcal{B}\backslash([0,1)\times\partial\mathcal{B})$ and in local coordinates $(x,y)\in (0,1)\times\partial\mathcal{B}$ on the collar part they are of the form $\omega(x)c(y)x^{-\rho}\log^{k}(x)$, where $c\in C^{\infty}(\partial\mathbb{B})$, $\rho\in\{z\in\mathbb{C}\, |\, \mathrm{Re}(z)\in [\frac{n-3}{2}-\gamma,\frac{n+1}{2}-\gamma)\}$ and $k\in\{0,1\}$. The exponents $\rho$ are determined explicitly by the metric $h(\cdot)$.

We note that, according to \eqref{mellinsobolevinteger}, functions of the form $\omega(x)c(y)x^{-\rho}\log^{k}(x)$ belong to the space $\cap_{\varepsilon>0}\mathcal{H}_{p}^{\infty,\frac{n+1}{2}-\rho-\varepsilon}(\mathbb{B})$, which does not depend on $p$, due to Lemma \ref{propms} (vi). However, unless $c\equiv0$, they do not belong to $\mathcal{H}_{p}^{\infty,\frac{n+1}{2}-\rho}(\mathbb{B})$, for any $p\in(1,\infty)$. In particular, nonzero elements of $\mathcal{E}_{\Delta,\gamma}$ belong to $\mathcal{H}_{p}^{\infty,\gamma}(\mathbb{B})\backslash\mathcal{H}_{p}^{\infty,\gamma+2}(\mathbb{B})$.

Due to \eqref{dmax1}, there are several closed extensions of $\Delta$ in $\mathcal{H}^{s,\gamma}_p(\mathbb{B})$, each one corresponding to a subspace of $\mathcal{E}_{\Delta,\gamma}$. For an overview on the domain structure of a general $\mathbb{B}$-elliptic cone differential operator we refer to \cite[Section 3]{GKM} or alternatively to \cite[Sections 2.2 and 2.3]{SS}.

Note that the maximal and minimal domain also depend on the choice of $p\in(1,\infty)$ and $\gamma\in\mathbb{R}$. When this is relevant, we use the notation $\mathcal{D}(\underline{\Delta}_{s,p,\min})$ or $\mathcal{D}(\underline{\Delta}_{s,p,\gamma,\min})$ to denote the minimal domain and similar notations for other domains.

For a suitable choice of the space of functions and domains for the Laplacian and bi-Laplacian operator, we make first some very basic physical considerations. Usually, the solution of the Cahn-Hilliard equation provides a function that describes the concentration of the phases in the process of phase separation of cooling binary solutions. For this reason:
\begin{itemize}
\item It would be reasonable for the solution for positive times to be bounded, and possibly smooth;
\item Constant solutions such as $u=1$ or $u=-1$ should be allowed, as they represent pure phases in the process;
\item The problem should be well-posed, that is, the operator we choose must be the generator of an analytic semigroup.
\end{itemize}
A possible choice for the domain of $\Delta$, which we will see that is physically reasonable, is given by the space $X_{1}^{s}\subset X_{0}^{s}=\mathcal{H}_{p}^{s,\gamma}(\mathbb{B})$ defined below, where $\gamma$ can be smaller than $0$ when $n<3$, allowing even more singular spaces than $\mathcal{H}_{p}^{0,0}(\mathbb{B})$ in these cases.

\begin{definition}\label{constfunt}
Recall that $\partial\mathcal{B}=\cup_{j=1}^{k_{\mathcal{B}}}\partial\mathcal{B}_{j}$, for a certain $k_{\mathcal{B}}\in\mathbb{N}\backslash\{0\}$, where $\partial\mathcal{B}_{j}$ are closed, smooth and connected. Denote by $\mathbb{C}_{\omega}$ the space of all $C^{\infty}(\mathbb{B}^\circ)$ functions $c$ that vanish on $\mathcal{B}\backslash([0,1)\times\partial\mathcal{B})$ and on each component $[0,1)\times\partial\mathcal{B}_{j}$, $j\in\{1,...,k_{\mathbb{B}}\}$, they are of the form $c_{j}\omega$, where $c_{j}\in\mathbb{C}$, i.e. $\mathbb{C}_{\omega}$ consists of smooth functions that are locally constant close to the boundary. Endow $\mathbb{C}_{\omega}$ with the norm $\|\cdot\|_{\mathbb{C}_{\omega}}$ given by $c\mapsto \|c\|_{\mathbb{C}_{\omega}}=(\sum_{j=1}^{k_{\mathcal{B}}}|c_{j}|^{2})^{\frac{1}{2}}$. 
\end{definition}

If we restrict the weight $\gamma$ to the interval $(\frac{n-3}{2},\frac{n+1}{2})$, then $0\in Q_{\Delta}$ is always contained in the strip $\{z\in\mathbb{C}\, |\, \mathrm{Re}(z)\in (\frac{n-3}{2}-\gamma,\frac{n+1}{2}-\gamma)\}$. This allows us the following choice. 

\begin{domainL*}
Let $p\in(1,\infty)$, $s\geq0$ and $\gamma$ be as in \eqref{gamma}.
Then the map
\begin{gather}\label{ddelta}
\Delta:X_{1}^{s}=\mathcal{D}(\underline{\Delta}_{s})=\mathcal{D}(\underline{\Delta}_{s,\min})\oplus\mathbb{C}_{\omega}=\mathcal{H}_{p}^{s+2,\gamma+2}(\mathbb{B})\oplus\mathbb{C}_{\omega}\rightarrow X_{0}^{s}=\mathcal{H}_{p}^{s,\gamma}(\mathbb{B})
\end{gather}
defines a closed extension of $\Delta$ in $\mathcal{H}_{p}^{s,\gamma}(\mathbb{B})$ which we denote by $\underline{\Delta}_{s}$.
\end{domainL*}

\begin{remark}\label{Bancalg}
Let $p\in(1,\infty)$, $s+2>\frac{n+1}{p}$ and $\gamma+2\ge\frac{n+1}{2}$. Then, by \cite[Lemma 3.2]{RS3} we have that $X_{1}^{s}$ is a Banach algebra up to an equivalent norm. Moreover, $X_{1}^{s}\hookrightarrow C(\mathbb{B})$.
\end{remark}

The description of the domain of the bi-Laplacian is now straightforward.

\begin{domain*}
Let $p\in(1,\infty)$, $s\geq0$ and $\gamma$ be as in \eqref{gamma}. The domain of the bi-Laplacian $\underline{\Delta}_{s}^{2}$ associated to the Laplacian $\underline{\Delta}_{s}$ from \eqref{ddelta}, defined as usual by 
\begin{gather*}\label{pppo}
\mathcal{D}(\underline{\Delta}_{s}^{2})=\{u\in\mathcal{D}(\underline{\Delta}_{s})\, |\, \underline{\Delta}_{s}u\in \mathcal{D}(\underline{\Delta}_{s})\},
\end{gather*}
has the following form
\begin{gather}\label{bilapldmn}
X_{2}^{s}=\mathcal{D}(\underline{\Delta}_{s,\min}^{2})\oplus \mathbb{C}_{\omega}\oplus \mathcal{E}_{\Delta^{2},\gamma}.
\end{gather}

The minimal domain satisfies
$$
\mathcal{H}_{p}^{s+4,\gamma+4}(\mathbb{B})\hookrightarrow \mathcal{D}(\underline{\Delta}_{s,\min}^{2}) \hookrightarrow \mathcal{H}_{p}^{s+4,\gamma+4-\varepsilon}(\mathbb{B})
$$
for all $\varepsilon>0$, and if
$$
Q_{\Delta^{2}}\cap \Big\{z\in\mathbb{C}\, |\, \mathrm{Re}(z)=\frac{n-7}{2}-\gamma\Big\}=\emptyset
$$
then 
$$
\mathcal{D}(\underline{\Delta}_{s,\min}^{2}) = \mathcal{H}_{p}^{s+4,\gamma+4}(\mathbb{B}),
$$
where
$$
Q_{\Delta^{2}}=\bigcup_{\lambda_{j}\in\sigma(\Delta_{h(0)})}\Big\{\frac{n-1}{2}\pm\sqrt{\Big(\frac{n-1}{2}\Big)^{2}-\lambda_{j}}\Big\}\cup\Big\{\frac{n-5}{2}\pm\sqrt{\Big(\frac{n-1}{2}\Big)^{2}-\lambda_{j}}\Big\}.
$$

Moreover, $\mathcal{E}_{\Delta^{2},\gamma}$ is a finite dimensional $s$-independent space consisting of linear combinations of $C^{\infty}(\mathbb{B}^{\circ})$-functions that are equal to zero on $\mathcal{B}\backslash([0,1)\times\partial\mathcal{B})$ and in local coordinates $(x,y)\in (0,1)\times\partial\mathcal{B}$ on the collar part they are of the form $\omega(x)c(y)x^{-\rho}\log^{k}(x)$, where $c\in C^{\infty}(\partial\mathbb{B})$, $\rho\in\{z\in\mathbb{C}\, |\, \mathrm{Re}(z)\in [\frac{n-7}{2}-\gamma,\frac{n-3}{2}-\gamma)\}$ and $k\in\{0,1,2,3\}$. In particular, there exists a $0<\delta_{0}<2$ such that 
\begin{gather}\label{asympembedd}
\mathcal{E}_{\Delta^{2},\gamma}\hookrightarrow \mathcal{H}_{p}^{\infty,\gamma+2+\delta_{0}}(\mathbb{B}).
\end{gather}
The constant $\delta_{0}$ is determined by the fact that it should be smaller than $\frac{n+1}{2}-(\rho+\gamma+2)$ for all $\rho$ such that $\omega(x)c(y)x^{-\rho}\log^{k}(x)$ belongs to $\mathcal{E}_{\Delta^{2},\gamma}$. Once we fix such a constant $\delta_{0}$, we can always find a slightly large constant $\widetilde{\delta}_{0}$ with the same properties.

Finally, if $h(x)$ is constant in $x$ when $x$ is close to $0$, then 
$$
\mathcal{E}_{\Delta^{2},\gamma}=\bigoplus_{\rho\in Q_{\Delta^{2},\gamma}\cap [\frac{n-7}{2}-\gamma,\frac{n-3}{2}-\gamma)} \mathcal{E}_{\Delta^{2},\gamma,\rho}.
$$
Here each $\mathcal{E}_{\Delta^{2},\gamma,\rho}$ is a finite dimensional $s$-independent space consisting of linear combinations of $C^{\infty}(\mathbb{B}^{\circ})$-functions that are equal to zero on $\mathcal{B}\backslash([0,1)\times\partial\mathcal{B})$ and in local coordinates $(x,y)\in (0,1)\times\partial\mathcal{B}$ on the collar part they are of the form $\omega(x)c(y)x^{-\rho}\log^{k}(x)$, where $c\in C^{\infty}(\partial\mathbb{B})$ and $k\in\{0,1,2,3\}$.
\end{domain*}

By the restriction of the weight $\gamma$ in terms of the spectrum of the cross-section Laplacian $\Delta_{h(0)}$ as in \eqref{gamma}, we have that $X_{1}^{0}$ in the case of $p=2$ is contained in a domain of a self-adjoint extension of $\Delta$ in $\mathcal{H}_{2}^{0,0}(\mathbb{B})$, see the proof of \cite[Theorem 5.7]{SS}. Then, $\underline{\Delta}_{s}$ fulfills certain ellipticity conditions with respect to an arbitrary sector in the complex plane, see (E1)-(E3) in \cite[Section 3.2]{SS} or (E1)-(E3) in \cite[Section 4]{SS1}. In this situation the above described Laplacian and bi-Laplacian satisfy the properties of sectoriality and boundedness of the imaginary powers as we can see from the following. 

\begin{proposition}\label{bipforbi}
Let $p\in(1,\infty)$, $s\geq0$, $\gamma$ be as in \eqref{gamma}, $\underline{\Delta}_{s}$ be the Laplacian \eqref{ddelta} and $c>0$. Then, for each $\theta\in[0,\pi)$ and $\phi>0$ we have that both $c-\underline{\Delta}_{s}$ and $(c-\underline{\Delta}_{s})^{2}$ belong to $\mathcal{P}(\theta)\cap\mathcal{BIP}(\phi)$.
\end{proposition}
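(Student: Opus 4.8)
The plan is to reduce the proposition to the resolvent and bounded imaginary powers theory for parameter-elliptic cone differential operators, and then to deduce the statement for the square by an abstract argument. As noted in the discussion preceding the proposition, for $\gamma$ restricted as in \eqref{gamma} the operator $\Delta$ with domain \eqref{ddelta} satisfies the ellipticity conditions (E1)--(E3) of \cite[Section 3.2]{SS}, equivalently of \cite[Section 4]{SS1}, with respect to an arbitrary sector in $\mathbb{C}$: condition (E1) is trivial, since the rescaled symbol of $\Delta$ equals, up to a strictly positive factor, $|\xi|^{2}+\tau^{2}$; condition (E2) is exactly \eqref{Qset}, which is satisfied because for $\gamma$ in the range \eqref{gamma} the number $\frac{n-3}{2}-\gamma$ lies strictly between $0$ and the largest element of $Q_{\Delta}$ below $0$, so in particular $\mathcal{D}(\underline{\Delta}_{s,\min})=\mathcal{H}_{p}^{s+2,\gamma+2}(\mathbb{B})$ for all $s\ge0$; and condition (E3), the invertibility of the model cone operator on every sector, comes from the comparison of $X_{1}^{0}$ with the domain of a non-positive self-adjoint extension of $\Delta$ in $\mathcal{H}_{2}^{0,0}(\mathbb{B})$ carried out in the proof of \cite[Theorem 5.7]{SS}---this is exactly where the sharper restriction \eqref{gamma}, rather than merely \eqref{gammafirst}, is needed.

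Granting (E1)--(E3), the resolvent construction of the cone pseudodifferential calculus (see \cite{SS1}, \cite{GKM}, \cite{CSS1}) yields, for every $\theta\in[0,\pi)$ and every $c>0$, constants $K\ge1$ and $R>0$ such that $(1+|\lambda|)\|(\lambda+c-\underline{\Delta}_{s})^{-1}\|_{\mathcal{L}(\mathcal{H}_{p}^{s,\gamma}(\mathbb{B}))}\le K$ for $\lambda\in S_{\theta}$ with $|\lambda|\ge R$, uniformly along the scale $s\ge0$ and independently of $p\in(1,\infty)$, because the parametrix is a cone pseudodifferential operator of order $-2$ acting continuously on all the spaces involved. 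The remaining bounded part $S_{\theta}\cap\{|\lambda|\le R\}$ lies in the resolvent set of $-(c-\underline{\Delta}_{s})$: indeed, for $p=2$ and $s=0$ the operator $\underline{\Delta}_{0}$ is the non-positive self-adjoint extension from the first step, so $\sigma(\underline{\Delta}_{0})\subset(-\infty,0]$, and the cone calculus shows $\sigma(\underline{\Delta}_{s})$ to be independent of $p$ and of $s\ge0$; hence $\lambda+c\notin\sigma(\underline{\Delta}_{s})$ whenever $\lambda\in S_{\theta}$ and $c>0$, and by continuity the resolvent is bounded on that compact set. Combining the two estimates gives $c-\underline{\Delta}_{s}\in\mathcal{P}(\theta)$ for every $c>0$ and every $\theta\in[0,\pi)$. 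The boundedness of the imaginary powers then follows from the corresponding theorem for parameter-elliptic cone operators (\cite{CSS1}; see also \cite{SS}, \cite{RS3}): since $\Delta$ is parameter-elliptic with respect to sectors of arbitrarily large opening, the power angle of $c-\underline{\Delta}_{s}$ is $0$, i.e.\ $c-\underline{\Delta}_{s}\in\mathcal{BIP}(\phi)$ for every $\phi>0$; for $p=2$ this is already immediate from the spectral theorem, since $c-\underline{\Delta}_{0}\ge c>0$.

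It remains to handle $(c-\underline{\Delta}_{s})^{2}$. Writing $A=c-\underline{\Delta}_{s}$, the previous paragraph gives $A\in\mathcal{P}(\theta)$ for every $\theta\in[0,\pi)$, so $A$ is an invertible sectorial operator of spectral angle $0$; hence $A^{2}$ is again invertible sectorial of spectral angle $0$, with $\mathcal{D}(A^{2})=\mathcal{D}(\underline{\Delta}_{s}^{2})=X_{2}^{s}$, and the composition rules for complex powers of sectorial operators \cite[Theorem III.4.6.5]{Am} give $(A^{2})^{it}=A^{2it}\in\mathcal{L}(\mathcal{H}_{p}^{s,\gamma}(\mathbb{B}))$ with $\|(A^{2})^{it}\|_{\mathcal{L}(\mathcal{H}_{p}^{s,\gamma}(\mathbb{B}))}\le\widetilde{M}e^{2\phi|t|}$ for every $\phi>0$. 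Thus $A^{2}\in\mathcal{BIP}(\phi')$ for every $\phi'>0$, and since $\mathcal{BIP}(\pi-\theta)\subset\mathcal{P}(\theta)$ also $A^{2}\in\mathcal{P}(\theta)$ for every $\theta\in[0,\pi)$. This proves the proposition.

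I expect the genuine work to lie entirely in the verification of (E1)--(E3), and within it in establishing (E3)---the invertibility of the model cone operator for \emph{every} opening angle---which is precisely where the restriction \eqref{gamma} and the non-positive self-adjoint comparison in $\mathcal{H}_{2}^{0,0}(\mathbb{B})$ enter, together with the book-keeping needed to see that all the resolvent and imaginary-power bounds are uniform along the scale $s\ge0$ and independent of $p$. Granting that, the remaining two steps are a routine application of the cone pseudodifferential calculus and of the abstract passage from $A$ to $A^{2}$.
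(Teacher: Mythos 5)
Your proposal is correct and takes essentially the same route as the paper: the paper's own proof is purely by citation, invoking \cite[Theorem 4.1]{Ro1} or \cite[Theorem 5.6]{RS2} for the sectoriality of $c-\underline{\Delta}_{s}$, \cite[Theorem 6.7]{SS1} for its bounded imaginary powers, and \cite[Lemma 3.6]{RS1} for the passage to $(c-\underline{\Delta}_{s})^{2}$. What you have written out---the verification of (E1)--(E3) under the restriction \eqref{gamma} (including the role of the self-adjoint comparison from \cite[Theorem 5.7]{SS}), the cone-calculus resolvent construction, and the second law of exponents $(A^{2})^{it}=A^{2it}$ for an invertible sectorial operator of power angle $0$---is precisely the content of those cited results.
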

\begin{proof}
Concerning $c-\underline{\Delta}_{s}$, the sectoriality follows by \cite[Theorem 4.1]{Ro1} or \cite[Theorem 5.6]{RS2}. The boundedness of the imaginary powers can be obtained from \cite[Theorem 6.7]{SS1}. In the case of straight conical tips, i.e. when $h(\cdot)$ is constant close to the boundary, the result alternatively follows by \cite[Theorems 5.6 and 5.7]{SS}, see also \cite[Section 3.1]{RS1}.

Concerning $(c-\underline{\Delta}_{s})^{2}$, the result follows from \cite[Lemma 3.6]{RS1}.
\end{proof}

\begin{proposition}\label{biuytpfohit}
Let $p\in(1,\infty)$, $s\geq0$, $\gamma$ be as in \eqref{gamma} and $\underline{\Delta}_{s}$ be the Laplacian \eqref{ddelta}. Then, for any $\theta\in[0,\pi)$ and $\phi>0$, there exists a $c_{0}>0$ such that $\underline{\Delta}_{s}^{2}+c_{0}\in\mathcal{P}(\theta)\cap\mathcal{BIP}(\phi)$.
\end{proposition}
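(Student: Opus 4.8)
The plan is to deduce Proposition~\ref{biuytpfohit} from Proposition~\ref{bipforbi} by a perturbation argument. Recall that Proposition~\ref{bipforbi} already tells us that for every $c>0$ the operator $(c-\underline{\Delta}_{s})^{2}$ lies in $\mathcal{P}(\theta)\cap\mathcal{BIP}(\phi)$ for all $\theta\in[0,\pi)$ and $\phi>0$. The point is that $(c-\underline{\Delta}_{s})^{2}$ and $\underline{\Delta}_{s}^{2}+c_{0}$ differ only by lower-order terms: expanding $(c-\underline{\Delta}_{s})^{2}=\underline{\Delta}_{s}^{2}-2c\,\underline{\Delta}_{s}+c^{2}$, so that $\underline{\Delta}_{s}^{2}+c_{0}=(c-\underline{\Delta}_{s})^{2}+2c\,\underline{\Delta}_{s}+(c_{0}-c^{2})$. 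The operator $R:=2c\,\underline{\Delta}_{s}+(c_{0}-c^{2})$ maps $X_{2}^{s}=\mathcal{D}(\underline{\Delta}_{s}^{2})$ into $X_{1}^{s}=\mathcal{D}(\underline{\Delta}_{s})\hookrightarrow X_{0}^{s}$ and, relative to $(c-\underline{\Delta}_{s})^{2}$, it is of order one in the sense of fractional domains, i.e.\ it is bounded from $\mathcal{D}\big((c-\underline{\Delta}_{s})^{2}\big)^{1/2}$ (or any slightly larger fractional power domain) into $X_{0}^{s}$. Hence $R$ is a relatively bounded perturbation of $(c-\underline{\Delta}_{s})^{2}$ with relative bound $0$.

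The key steps, in order, would be: (1) Fix $\theta\in[0,\pi)$ and $\phi>0$; by Proposition~\ref{bipforbi}, $A_{0}:=(c-\underline{\Delta}_{s})^{2}\in\mathcal{P}(\theta')\cap\mathcal{BIP}(\phi')$ for any $\theta'\in(\theta,\pi)$ and $\phi'\in(0,\phi)$ of our choosing. (2) Show that the fractional power domain $\mathcal{D}(A_{0}^{\alpha})$ for suitable $\alpha<1$ (e.g.\ $\alpha=1/2+\epsilon$, using that $\underline{\Delta}_{s}$ and hence its domain is ``half'' of $A_{0}$, via the moment inequality and the identification of $\mathcal{D}(A_{0}^{1/2})$ with $\mathcal{D}(\underline{\Delta}_{s})$ up to equivalence of norms coming from $\underline{\Delta}_{s}\in\mathcal{BIP}$) contains $X_{1}^{s}$, so that $R\in\mathcal{L}(\mathcal{D}(A_{0}^{\alpha}),X_{0}^{s})$. (3) Invoke the standard perturbation theorems for $\mathcal{BIP}$ and for sectoriality under lower-order perturbations --- e.g.\ the Pr\"uss--Sohr type result, or \cite[Theorem III.4.8.7]{Am} / \cite[Proposition 3.3.9]{PS} --- which states that if $A_{0}\in\mathcal{BIP}(\phi')$ and $R$ is $A_{0}$-bounded with $A_{0}$-bound controlled by $A_{0}^{\alpha}$, $\alpha<1$, then $A_{0}+R+c_{1}\in\mathcal{BIP}(\phi')$ for all sufficiently large $c_{1}>0$; similarly for $\mathcal{P}(\theta)$ after a shift. (4) Choose $c_{0}$ large enough (absorbing the constant $c_{0}-c^{2}$ and the shift required by the perturbation theorem) so that $\underline{\Delta}_{s}^{2}+c_{0}=A_{0}+R\in\mathcal{P}(\theta)\cap\mathcal{BIP}(\phi)$; since $\theta$ and $\phi$ were arbitrary, the conclusion follows.

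The main obstacle I expect is step~(2)--(3): making precise that $R$ is genuinely a ``lower-order'' perturbation in the scale of fractional powers of $A_{0}$, and that the abstract perturbation theorem for bounded imaginary powers applies. This requires knowing that $\mathcal{D}(A_{0}^{\alpha})\supseteq\mathcal{D}(\underline{\Delta}_{s})$ for some $\alpha<1$, which in turn uses that $\underline{\Delta}_{s}$ itself has bounded imaginary powers (so that complex and fractional powers behave well and $\mathcal{D}((c-\underline{\Delta}_{s})^{2\alpha})=\mathcal{D}((c-\underline{\Delta}_{s})^{2\alpha})$ interpolates correctly between $X_{0}^{s}$ and $X_{2}^{s}$), together with the fact that $\mathcal{D}(\underline{\Delta}_{s})=X_{1}^{s}$ is exactly the ``$\alpha=1/2$'' intermediate space up to equivalent norms. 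Once this identification is in hand, the perturbation is standard, and one only needs to track how large $c_{0}$ must be; alternatively, one can avoid the fractional-power bookkeeping altogether by using the resolvent estimate for $(c-\underline{\Delta}_{s})^{2}$ directly, writing $(\underline{\Delta}_{s}^{2}+c_{0}+\lambda)^{-1}=(A_{0}+\lambda)^{-1}\big(I+R(A_{0}+\lambda)^{-1}\big)^{-1}$ and estimating $\|R(A_{0}+\lambda)^{-1}\|_{\mathcal{L}(X_{0}^{s})}\le C(1+|\lambda|)^{-1/2}\to0$ as $|\lambda|\to\infty$ in the sector, which handles sectoriality; the $\mathcal{BIP}$ statement is then the one genuinely delicate point and is precisely where the cited perturbation theorem for imaginary powers is indispensable.
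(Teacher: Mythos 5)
Your proposal is correct and follows essentially the same route as the paper: both decompose $\underline{\Delta}_{s}^{2}+c_{0}$ as $(c-\underline{\Delta}_{s})^{2}$ plus the lower-order term $\pm 2c\,\underline{\Delta}_{s}$ and a constant, take Proposition \ref{bipforbi} as the input, and conclude by an abstract perturbation theorem for sectoriality and $\mathcal{BIP}$ from Amann's book. The only difference is in bookkeeping: the paper applies \cite[Theorem III.4.8.5]{Am} to $A=(c-\underline{\Delta}_{s})^{2}+\eta$ and makes the relative bound small by taking $\eta$ large via explicit resolvent estimates (also checking the extra hypothesis on $(A+\lambda)^{-1}B(A+\lambda)^{-1}$), whereas you phrase the smallness through the fractional-power domain $\mathcal{D}(A_{0}^{1/2})=\mathcal{D}(\underline{\Delta}_{s})$ and a Pr\"uss--Sohr-type lower-order perturbation theorem; both are legitimate.
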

\begin{proof}
We apply the perturbation result \cite[Theorem III.4.8.5]{Am} with $A=(c-\underline{\Delta}_{s})^{2}+\eta$ and $B=-2c\underline{\Delta}_{s}$, $c,\eta>0$. By Proposition \ref{bipforbi} and \cite[Corollary III.4.8.6]{Am} we have that $A\in\mathcal{P}(\theta)\cap\mathcal{BIP}(\phi)$. Moreover, $\mathcal{D}(A)\subset\mathcal{D}(B)$. Denote by $K_{(c-\underline{\Delta}_{s})^{2}+\eta,\theta}$ the sectorial bound of $(c-\underline{\Delta}_{s})^{2}+\eta$ with respect to angle $\theta$. Note that $K_{(c-\underline{\Delta}_{s})^{2}+\eta,\theta}$ is uniformly bounded in $\eta\geq0$ (see e.g. \cite[Lemma 2.6]{RS3}). For each $\lambda\in S_{\theta}$ and $\rho\in (0,\frac{1}{2})$ we have
\begin{eqnarray*}
\lefteqn{\|B(A+\lambda)^{-1}\|_{\mathcal{L}(X_{0}^{s})}}\\
&\leq&2c\|\underline{\Delta}_{s}(c-\underline{\Delta}_{s})^{-1}\|_{\mathcal{L}(X_{0}^{s})}\\
&&\times\|(c-\underline{\Delta}_{s})((c-\underline{\Delta}_{s})^{2}+\eta)^{-1}\|_{\mathcal{L}(X_{0}^{s})}\|((c-\underline{\Delta}_{s})^{2}+\eta)((c-\underline{\Delta}_{s})^{2}+\eta+\lambda)^{-1}\|_{\mathcal{L}(X_{0}^{s})}\\
&\leq&2c\frac{C_{1}}{1+\eta^{\rho}}K_{(c-\underline{\Delta}_{s})^{2}+\eta,\theta}\|\underline{\Delta}_{s}(c-\underline{\Delta}_{s})^{-1}\|_{\mathcal{L}(X_{0}^{s})},
\end{eqnarray*}
where we have used \cite[Lemma 2.4]{Ro3}. The above constant $C_{1}>0$ depends only on $\rho$ and the sectorial bound of $(c-\underline{\Delta}_{s})^{2}$. Hence, by taking $\eta$ sufficiently large we can make $\|B(A+\lambda)^{-1}\|_{\mathcal{L}(X_{0}^{s})}<\sigma<1$ uniformly in $\lambda\in S_{\theta}$.

Furthermore, we estimate
 \begin{eqnarray*}
\lefteqn{\|(A+\lambda)^{-1}B(A+\lambda)^{-1}\|_{\mathcal{L}(X_{0}^{s})}}\\
&\leq&2c\|((c-\underline{\Delta}_{s})^{2}+\eta+\lambda)^{-1}\|_{\mathcal{L}(X_{0}^{s})}\|\underline{\Delta}_{s}(c-\underline{\Delta}_{s})^{-1}\|_{\mathcal{L}(X_{0}^{s})}\|(c-\underline{\Delta}_{s})((c-\underline{\Delta}_{s})^{2}+\eta+\lambda)^{-1}\|_{\mathcal{L}(X_{0}^{s})}\\
&\leq&2c\frac{K_{(c-\underline{\Delta}_{s})^{2}+\eta,\theta}}{1+|\lambda|}\frac{C_{2}}{1+|\eta+\lambda|^{\rho}}\|\underline{\Delta}_{s}(c-\underline{\Delta}_{s})^{-1}\|_{\mathcal{L}(X_{0}^{s})}
\end{eqnarray*}
for some $C_{2}>0$ depending on $\rho$ and the sectorial bound of $(c-\underline{\Delta}_{s})^{2}$, where we have used \cite[Lemma 2.4]{Ro3} once more. Therefore, the assumptions (i), (ii) and (iii) of \cite[Theorem III.4.8.5]{Am} are satisfied and the result follows. 
\end{proof}

Notice that the above choice of domains for the Laplacian and bi-Laplacian contains constant functions. Moreover, the operators are generators of analytic semigroups and, if the solutions of \eqref{CH1}-\eqref{CH2} regularize to a space at least as good as $X_{1}^{s}$ for $s+2>\frac{n+1}{p}$, then they are bounded for any positive time. Therefore, they are appropriate choices of domain according to our earlier physical considerations.

\section{The Cahn-Hilliard equation on conic manifolds}

\subsection{Short time solution and smoothness}

In order to apply the results of Section 2 to our concrete situation, we need to have some immersion of the interpolation spaces between the domain and the underline space.

\begin{lemma}\label{interfind} Let $p\in(1,\infty)$, $s\geq0$, $\gamma$ be as
in \eqref{gamma} and $\delta_{0}$ be as in \eqref{asympembedd}. \\
{\em (i)} If $q>2$, then
$$
(\mathcal{D}(\underline{\Delta}_{s}^{2}),\mathcal{H}_{p}^{s,\gamma}(\mathbb{B}))_{\frac{1}{q},q}\hookrightarrow\bigcap_{\epsilon>0}\mathcal{H}_{p}^{s+4(1-\frac{1}{q})-\epsilon,\gamma+2+\delta_{0}(1-\frac{2}{q})}(\mathbb{B})\oplus\mathbb{C}_{\omega}.
$$
{\em (ii)} If $p=q=2$, then
$$
(\mathcal{D}(\underline{\Delta}_{s}^{2}),\mathcal{H}_{2}^{s,\gamma}(\mathbb{B}))_{\frac{1}{2},2}=\mathcal{H}_{2}^{s+2,\gamma+2}(\mathbb{B})\oplus\mathbb{C}_{\omega}
$$
up to norm equivalence.
\end{lemma}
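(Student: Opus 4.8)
The plan is to reduce both parts to the complex-interpolation identity
$[\mathcal{H}_{p}^{s,\gamma}(\mathbb{B}),\mathcal{D}(\underline{\Delta}_{s}^{2})]_{\frac12}=\mathcal{D}(\underline{\Delta}_{s})=\mathcal{H}_{p}^{s+2,\gamma+2}(\mathbb{B})\oplus\mathbb{C}_{\omega}$.
To obtain it, set $A=c-\underline{\Delta}_{s}$ for $c>0$; since $\gamma$ is as in \eqref{gamma}, Proposition \ref{bipforbi} gives $A\in\mathcal{BIP}(\phi)$ for some $\phi$, and a direct computation shows $\mathcal{D}(A^{2})=\{u\in\mathcal{D}(\underline{\Delta}_{s})\,|\,\underline{\Delta}_{s}u\in\mathcal{D}(\underline{\Delta}_{s})\}=\mathcal{D}(\underline{\Delta}_{s}^{2})$ with equivalent graph norms. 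The theorem on complex interpolation of domains of complex powers of an operator with bounded imaginary powers (see, e.g., \cite[Theorem 1.15.3]{Trib}) then gives $[\mathcal{H}_{p}^{s,\gamma}(\mathbb{B}),\mathcal{D}(A^{2})]_{\frac12}=\mathcal{D}(A)=\mathcal{D}(\underline{\Delta}_{s})$, and the last term is described by the ``Domain of Laplacian'' box. This holds for every $p\in(1,\infty)$.

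Part (ii) is then immediate: for $p=q=2$ the couple $(\mathcal{D}(\underline{\Delta}_{2}^{2}),\mathcal{H}_{2}^{s,\gamma}(\mathbb{B}))$ consists of Hilbert spaces (Lemma \ref{propms}.1, with the graph norm on the domain), so the real functor $(\cdot,\cdot)_{\frac12,2}$ agrees with the complex functor $[\cdot]_{\frac12}$ (see, e.g., \cite[Theorem 1.18.10]{Trib}), and the previous paragraph yields $(\mathcal{D}(\underline{\Delta}_{2}^{2}),\mathcal{H}_{2}^{s,\gamma}(\mathbb{B}))_{\frac12,2}=\mathcal{H}_{2}^{s+2,\gamma+2}(\mathbb{B})\oplus\mathbb{C}_{\omega}$ up to norm equivalence.

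For part (i) ($q>2$) I would first reiterate. By the first paragraph $\mathcal{D}(\underline{\Delta}_{s})$ is an exact complex interpolation space of the couple $\big(\mathcal{D}(\underline{\Delta}_{s}^{2}),\mathcal{H}_{p}^{s,\gamma}(\mathbb{B})\big)$ at parameter $\tfrac12$, hence of class $\mathcal{C}(\tfrac12)$, while $\mathcal{D}(\underline{\Delta}_{s}^{2})$ is of class $\mathcal{C}(0)$; since $\tfrac2q\in(0,1)$, the reiteration theorem (see, e.g., \cite[Theorem 3.5.3]{BL}) gives
\[
\big(\mathcal{D}(\underline{\Delta}_{s}^{2}),\mathcal{H}_{p}^{s,\gamma}(\mathbb{B})\big)_{\frac1q,q}=\big(\mathcal{D}(\underline{\Delta}_{s}^{2}),\mathcal{D}(\underline{\Delta}_{s})\big)_{\frac2q,q}.
\]
Next I would split off $\mathbb{C}_{\omega}$. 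By \eqref{bilapldmn} and \eqref{asympembedd}, and because $\delta_{0}<2$, we have $\mathcal{D}(\underline{\Delta}_{s,\min}^{2})\hookrightarrow\mathcal{H}_{p}^{s+4,\gamma+4-\varepsilon}(\mathbb{B})\hookrightarrow\mathcal{H}_{p}^{s+2,\gamma+2}(\mathbb{B})$ for small $\varepsilon>0$ and $\mathcal{E}_{\Delta^{2},\gamma}\hookrightarrow\mathcal{H}_{p}^{\infty,\gamma+2+\delta_{0}}(\mathbb{B})\hookrightarrow\mathcal{H}_{p}^{s+2,\gamma+2}(\mathbb{B})$, so that $\mathcal{D}(\underline{\Delta}_{s,\min}^{2})\oplus\mathcal{E}_{\Delta^{2},\gamma}\subset\mathcal{H}_{p}^{s+2,\gamma+2}(\mathbb{B})$; hence the coordinate projection of $\mathcal{D}(\underline{\Delta}_{s})=\mathcal{H}_{p}^{s+2,\gamma+2}(\mathbb{B})\oplus\mathbb{C}_{\omega}$ onto $\mathbb{C}_{\omega}$ restricts on $\mathcal{D}(\underline{\Delta}_{s}^{2})=\big(\mathcal{D}(\underline{\Delta}_{s,\min}^{2})\oplus\mathcal{E}_{\Delta^{2},\gamma}\big)\oplus\mathbb{C}_{\omega}$ to the coordinate projection onto $\mathbb{C}_{\omega}$. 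Thus $\mathbb{C}_{\omega}$ is a common complemented subspace of the couple, and the usual splitting gives
\[
\big(\mathcal{D}(\underline{\Delta}_{s}^{2}),\mathcal{D}(\underline{\Delta}_{s})\big)_{\frac2q,q}=\mathbb{C}_{\omega}\oplus\big(\mathcal{D}(\underline{\Delta}_{s,\min}^{2})\oplus\mathcal{E}_{\Delta^{2},\gamma},\ \mathcal{H}_{p}^{s+2,\gamma+2}(\mathbb{B})\big)_{\frac2q,q}.
\]

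To finish I would estimate the last interpolation space. Choosing $\widetilde{\delta}_{0}\in(\delta_{0},2)$ with $\mathcal{E}_{\Delta^{2},\gamma}\hookrightarrow\mathcal{H}_{p}^{\infty,\gamma+2+\widetilde{\delta}_{0}}(\mathbb{B})$ and $\varepsilon\in(0,2-\widetilde{\delta}_{0})$, the above embeddings give $\mathcal{D}(\underline{\Delta}_{s,\min}^{2})\oplus\mathcal{E}_{\Delta^{2},\gamma}\hookrightarrow\mathcal{H}_{p}^{s+4,\gamma+2+\widetilde{\delta}_{0}}(\mathbb{B})$, so by functoriality of real interpolation and Lemma \ref{prop:interpolation}.1 in the case $q>2$, for all $\epsilon_{1},\epsilon_{2}>0$,
\[
\big(\mathcal{D}(\underline{\Delta}_{s,\min}^{2})\oplus\mathcal{E}_{\Delta^{2},\gamma},\ \mathcal{H}_{p}^{s+2,\gamma+2}(\mathbb{B})\big)_{\frac2q,q}\hookrightarrow\big(\mathcal{H}_{p}^{s+4,\gamma+2+\widetilde{\delta}_{0}}(\mathbb{B}),\mathcal{H}_{p}^{s+2,\gamma+2}(\mathbb{B})\big)_{\frac2q,q}\hookrightarrow\mathcal{H}_{p}^{s+4(1-\frac1q)-\epsilon_{1},\ \gamma+2+\widetilde{\delta}_{0}(1-\frac2q)-\epsilon_{2}}(\mathbb{B}),
\]
since $(1-\tfrac2q)(s+4)+\tfrac2q(s+2)=s+4(1-\tfrac1q)$ and $(1-\tfrac2q)(\gamma+2+\widetilde{\delta}_{0})+\tfrac2q(\gamma+2)=\gamma+2+\widetilde{\delta}_{0}(1-\tfrac2q)$. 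Taking $\widetilde{\delta}_{0}$ close to $\delta_{0}$ and $\epsilon_{2}$ small enough that $\widetilde{\delta}_{0}(1-\tfrac2q)-\epsilon_{2}\ge\delta_{0}(1-\tfrac2q)$, and then intersecting over $\epsilon_{1}>0$, yields an embedding into $\bigcap_{\epsilon>0}\mathcal{H}_{p}^{s+4(1-\frac1q)-\epsilon,\gamma+2+\delta_{0}(1-\frac2q)}(\mathbb{B})$; together with the previous display, and noting that the sum there is direct because the weight $\gamma+2+\delta_{0}(1-\tfrac2q)$ exceeds $\tfrac{n+1}2$ (so $\mathbb{C}_{\omega}$ meets those spaces only in $0$), this is exactly (i). The one genuinely delicate point is that $\mathbb{C}_{\omega}$ cannot be split off the couple $\big(\mathcal{D}(\underline{\Delta}_{s}^{2}),\mathcal{H}_{p}^{s,\gamma}(\mathbb{B})\big)$ directly: the functionals reading off the boundary constants are unbounded in $\|\cdot\|_{\mathcal{H}_{p}^{s,\gamma}(\mathbb{B})}$ because $\mathcal{D}(\underline{\Delta}_{s,\min}^{2})$ is dense in $\mathcal{H}_{p}^{s,\gamma}(\mathbb{B})$, so the reiteration through $\mathcal{D}(\underline{\Delta}_{s})$ — where $\mathbb{C}_{\omega}$ sits compatibly in both endpoints — is indispensable, and verifying the class-$\mathcal{C}$ hypotheses together with the compatibility of the $\mathbb{C}_{\omega}$-projections (which uses $\delta_{0}<2$) is where the care is needed.
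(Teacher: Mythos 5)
Your argument is correct, and for part (i) it takes a genuinely different route from the paper's. Both proofs first reduce, by reiteration through $\mathcal{D}(\underline{\Delta}_{s})$, to the couple $(\mathcal{D}(\underline{\Delta}_{s}^{2}),\mathcal{D}(\underline{\Delta}_{s}))$ at parameter $\tfrac2q$, and part (ii) is in both cases the Hilbert-couple coincidence of $(\cdot,\cdot)_{\frac12,2}$ with $[\cdot,\cdot]_{\frac12}$ together with $[\mathcal{H}_{2}^{s,\gamma}(\mathbb{B}),\mathcal{D}(\underline{\Delta}_{s}^{2})]_{\frac12}=\mathcal{D}(\underline{\Delta}_{s})$. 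The divergence is in how $\mathbb{C}_{\omega}$ is handled in (i). The paper enlarges the couple to $(\mathcal{H}_{p}^{s+4,\gamma+2+\delta_{0}}(\mathbb{B})\oplus\mathbb{C}_{\omega},\mathcal{H}_{p}^{s+2,\gamma+2}(\mathbb{B})\oplus\mathbb{C}_{\omega})$, interpolates, and then must identify the resulting space $E$: it applies $\Delta$ to place $E$ inside the maximal domain of $\Delta$ over a weaker Mellin--Sobolev space, which produces an a priori larger asymptotics space $\mathcal{E}_{\Delta,\gamma+\delta_{0}(1-\frac2q)-\varepsilon}$, and then applies $\omega(x)x\partial_{x}$ to show that only the $\mathbb{C}_{\omega}$-part of that space can actually occur. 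You instead observe that $\mathbb{C}_{\omega}$ is a common complemented subspace of the couple $(\mathcal{D}(\underline{\Delta}_{s}^{2}),\mathcal{D}(\underline{\Delta}_{s}))$ --- the compatibility of the two coordinate projections being exactly the inclusion $\mathcal{D}(\underline{\Delta}_{s,\min}^{2})\oplus\mathcal{E}_{\Delta^{2},\gamma}\subset\mathcal{H}_{p}^{s+2,\gamma+2}(\mathbb{B})$, which uses $0<\delta_{0}<2$ and the directness of both sums for $\gamma+2>\frac{n+1}{2}$ --- so interpolation commutes with the splitting and only the Mellin--Sobolev complements need to be interpolated via Lemma \ref{prop:interpolation}.1, with the same $\widetilde{\delta}_{0}$-enlargement trick the paper uses to absorb the $\varepsilon$-loss in the weight. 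This avoids the maximal-domain and $x\partial_{x}$ bookkeeping entirely; the price is the (standard) complemented-subspace interpolation lemma and the verification that the projection is a morphism of the couple, which you carry out. Your closing remark correctly identifies why the splitting must be done after, not before, the reiteration: no bounded projection onto $\mathbb{C}_{\omega}$ in $\mathcal{H}_{p}^{s,\gamma}(\mathbb{B})$ is compatible with the one on $\mathcal{D}(\underline{\Delta}_{s}^{2})$. (Minor typo: $\mathcal{D}(\underline{\Delta}_{2}^{2})$ in your part (ii) should read $\mathcal{D}(\underline{\Delta}_{s}^{2})$.)
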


\begin{proof}
(i) By \cite[(I.2.5.4)]{Am} and \cite[Corollary
7.3 (c)]{Haa} we have that 
$$
(\mathcal{D}(\underline{\Delta}_{s}^{2}),\mathcal{H}_{p}^{s,\gamma}(\mathbb{B}))_{\frac{1}{q},q}=(\mathcal{D}(\underline{\Delta}_{s}^{2}),\mathcal{D}(\underline{\Delta}_{s}))_{\frac{2}{q},q}\hookrightarrow(\mathcal{H}_{p}^{s+4,\gamma+2+\delta_{0}}(\mathbb{B})\oplus\mathbb{C}_{\omega},\mathcal{H}_{p}^{s+2,\gamma+2}(\mathbb{B})\oplus\mathbb{C}_{\omega})_{\frac{2}{q},q}=E,
$$
 where we have used the description of the bi-Laplacian domain; in particular \eqref{asympembedd}. We follow the ideas in \cite[Theorem 3.3]{Ro2}. By \cite[(I.2.5.2)]{Am} write any $v\in E$ as $v=w+a$, where $w\in \mathcal{H}_{p}^{s+2,\gamma+2}(\mathbb{B})$ and $a\in \mathbb{C}_{\omega}$. Due to \cite[Theorem B.2.3]{Haa2} we have that $\Delta$ maps $E$ to 
\begin{equation*}\label{eq:Emapsto}
(\mathcal{H}_{p}^{s+2,\gamma+\delta_{0}}(\mathbb{B}),\mathcal{H}_{p}^{s,\gamma}(\mathbb{B}))_{\frac{2}{q},q}\hookrightarrow\mathcal{H}_{p}^{s+2(1-\frac{2}{q})-\varepsilon,\gamma+\delta_{0}(1-\frac{2}{q})-\varepsilon}(\mathbb{B})
\end{equation*}
 for all $\varepsilon>0$, where we have used Lemma \ref{prop:interpolation} (i). Hence $w$ belongs to the maximal domain of $\Delta$ in $\mathcal{H}_{p}^{s+2(1-\frac{2}{q})-\varepsilon,\gamma+\delta_{0}(1-\frac{2}{q})-\varepsilon}(\mathbb{B})$.
We conclude that
\begin{gather}
E\hookrightarrow\mathcal{H}_{p}^{s+4(1-\frac{1}{q})-\epsilon,\gamma+2+\delta_{0}(1-\frac{2}{q})-\varepsilon}(\mathbb{B})\oplus\mathcal{E}_{\Delta,\gamma+\delta_{0}(1-\frac{2}{q})-\varepsilon}\oplus\mathbb{C}_{\omega},\label{embdkah}
\end{gather}
 for all $\varepsilon>0$. Note that $\mathcal{E}_{\Delta,\gamma+\delta_{0}(1-\frac{2}{q})-\varepsilon}\oplus\mathbb{C}_{\omega}=\mathcal{E}_{\Delta,\gamma+\delta_{0}(1-\frac{2}{q})-\varepsilon}$
if $\gamma+\delta_{0}(1-\frac{2}{q})-\varepsilon<\frac{n+1}{2}$.
Furthermore, $\omega(x)x\partial_{x}$ maps $E$ to 
$$
(\mathcal{H}_{p}^{s+3,\gamma+2+\delta_{0}}(\mathbb{B}),\mathcal{H}_{p}^{s+1,\gamma+2}(\mathbb{B}))_{\frac{2}{q},q}\hookrightarrow\mathcal{H}_{p}^{s+1+2(1-\frac{2}{q})-\varepsilon,\gamma+2+\delta_{0}(1-\frac{2}{q})-\varepsilon}(\mathbb{B}),
$$
 for all $\epsilon>0$. By the above embedding, \eqref{embdkah},
the structure of $\mathcal{E}_{\Delta,\gamma+\delta_{0}(1-\frac{2}{q})-\varepsilon}\oplus\mathbb{C}_{\omega}$
and the identity $x\partial_{x}(x^{m}\log^{k}(x))=mx^{m}\log^{k}(x)+kx^{m}\log^{k-1}(x)$, $m\in\mathbb{C}$, $k\in\mathbb{N}$,
we see that $\mathcal{E}_{\Delta,\gamma+\delta_{0}(1-\frac{2}{q})-\varepsilon}\oplus\mathbb{C}_{\omega}$
can consist only of linear combinations of $\mathbb{C}_{\omega}$
terms.

As the above argument holds for all $\varepsilon>0$ and as we can
always find a slightly bigger $\delta_{0}$ with the same properties
as in \eqref{asympembedd}, we conclude our proof.

(ii) We have
$$
(\mathcal{D}(\underline{\Delta}_{s}^{2}),\mathcal{H}_{2}^{s,\gamma}(\mathbb{B}))_{\frac{1}{2},2}=(\mathcal{H}_{2}^{s,\gamma}(\mathbb{B}),\mathcal{D}(\underline{\Delta}_{s}^{2}))_{\frac{1}{2},2}=[\mathcal{H}_{2}^{s,\gamma}(\mathbb{B}),\mathcal{D}(\underline{\Delta}_{s}^{2})]_{\frac{1}{2}}=\mathcal{D}(\underline{\Delta}_{s}),
$$
where we have used \cite[(I.2.5.4)]{Am} in the first equality, \cite[Corollary 4.37]{Lunardi} in the second equality and the fact that $\underline{\Delta}_{s}^{2}+c_{0}$, $c_{0}>0$, has bounded imaginary powers in the third equality, i.e. \cite[(I.2.9.8)]{Am}.
\end{proof}
We can now apply our previous results to the study of the Cahn-Hilliard equation.

\subsubsection*{Proof of Theorem \ref{ThLTS}, first part (short time existence and regularity).}

The immersion of \eqref{u0reg} follows from the fact that $\mathbb{C}_{\omega}\subset\mathcal{D}(\underline{\Delta}_{s}^{2})\cap\mathcal{H}_{p}^{s,\gamma}(\mathbb{B})$ and from Lemma \ref{prop:interpolation} (ii).

The proof of the existence and uniqueness of a solution satisfying
\eqref{uexists} is a simple application of Theorem
\ref{thcl} to \eqref{CH1}-\eqref{CH2}
with $X_{0}=\mathcal{H}_{p}^{s,\gamma}(\mathbb{B})$, $X_{1}=\mathcal{D}(\underline{\Delta}_{s}^{2})$,
$U$ an open bounded subset of $(X_{1},X_{0})_{\frac{1}{q},q}$ containing $u_{0}$, $A=\underline{\Delta}_{s}^{2}$
and $F(\cdot)=\underline{\Delta}_{s}((\cdot)^{3}-(\cdot))$.

In fact, the operator $A$ has maximal $L^{q}$-regularity due to
Proposition \ref{biuytpfohit} and Theorem \ref{dorevenni}.
The function $F:(\mathcal{D}(\underline{\Delta}_{s}^{2}),\mathcal{H}_{p}^{s,\gamma}(\mathbb{B}))_{\frac{1}{q},q}\to\mathcal{H}_{p}^{s,\gamma}(\mathbb{B})$
given by $F(u)=\underline{\Delta}_{s}(u^{3}-u)$
is $C^{\infty}$ and, in particular, locally Lipschitz, as we will see later on. The property \eqref{contsolu}
follows by Remark \ref{embedtoC}, Lemma \ref{interfind}
and Remark \ref{Bancalg}. 

For \eqref{extrareg}, we apply Theorem \ref{abstractshortsmooth}
to the Banach scale $Y_{0}^{k}=X_{0}^{s+k}$, $Y_{1}^{k}=X_{2}^{s+k}$,
$k\in\mathbb{N}$. We will suppose that $q<4$, without loss of generality.
In fact, if $q\ge4$, we can choose $\widetilde{q}\in(2,4)$
and note that $(X_{1},X_{0})_{\frac{1}{q},q}\hookrightarrow(X_{1},X_{0})_{\frac{1}{\widetilde{q}},\widetilde{q}}$
and 
$$
W^{1,q}(0,T;\mathcal{H}_{p}^{s,\gamma}(\mathbb{B}))\cap L^{q}(0,T;\mathcal{D}(\underline{\Delta}_{s}^{2}))\hookrightarrow W^{1,\widetilde{q}}(0,T;\mathcal{H}_{p}^{s,\gamma}(\mathbb{B}))\cap L^{\widetilde{q}}(0,T;\mathcal{D}(\underline{\Delta}_{s}^{2})).
$$
Hence the smoothness will follow from the case where $q<4$.

It remains to check the assumptions of Theorem \ref{abstractshortsmooth}.
It is clear by the definition that $Y_{1}^{k}\overset{d}{\hookrightarrow}Y_{0}^{k}$,
$Y_{0}^{k+1}\overset{d}{\hookrightarrow}Y_{0}^{k}$ and $Y_{1}^{k+1}\overset{d}{\hookrightarrow}Y_{1}^{k}$.
In order to prove that $Y_{1}^{k}\overset{d}{\hookrightarrow}(Y_{1}^{k+1},Y_{0}^{k+1})_{\frac{1}{q},q}$,
we first note that, by \eqref{bilapldmn}, we have
\begin{gather}
Y_{1}^{k}=\mathcal{D}(\underline{\Delta}_{s+k,\min}^{2})\oplus\mathbb{C}_{\omega}\oplus\mathcal{E}_{\Delta^{2},\gamma}\hookrightarrow\mathcal{H}_{p}^{s+k+4,\gamma+4-\varepsilon}(\mathbb{B})\oplus\mathbb{C}_{\omega}\oplus\mathcal{E}_{\Delta^{2},\gamma}\label{pkadya}
\end{gather}
 for all $\varepsilon>0$, where the finite dimensional space $\mathcal{E}_{\Delta^{2},\gamma}$
is independent of the order $s+k$. As $q<4$, Lemma \ref{prop:interpolation} (ii) implies that
$$
\mathcal{H}_{p}^{s+k+4,\gamma+4-\varepsilon}(\mathbb{B})\hookrightarrow(\mathcal{H}_{p}^{s+(k+1)+4,\gamma+4}(\mathbb{B}),\mathcal{H}_{p}^{s+(k+1),\gamma}(\mathbb{B}))_{\frac{1}{q},q}
$$
 for all $\varepsilon>0$ sufficiently small. Therefore for any such
$\varepsilon>0$
$$
\mathcal{H}_{p}^{s+k+4,\gamma+4-\varepsilon}(\mathbb{B})\hookrightarrow(\mathcal{H}_{p}^{s+(k+1)+4,\gamma+4}(\mathbb{B})\oplus\mathbb{C}_{\omega}\oplus\mathcal{E}_{\Delta^{2},\gamma},\mathcal{H}_{p}^{s+(k+1),\gamma}(\mathbb{B}))_{\frac{1}{q},q}.
$$

By noting that 
$$
\mathbb{C}_{\omega}\oplus\mathcal{E}_{\Delta^{2}}\hookrightarrow(\mathcal{H}_{p}^{s+(k+1)+4,\gamma+4}(\mathbb{B})\oplus\mathbb{C}_{\omega}\oplus\mathcal{E}_{\Delta^{2},\gamma},\mathcal{H}_{p}^{s+(k+1),\gamma}(\mathbb{B}))_{\frac{1}{q},q},
$$
\eqref{pkadya} implies
$$
Y_{1}^{k}\hookrightarrow(Y_{1}^{k+1},Y_{0}^{k+1})_{\frac{1}{q},q}.
$$

The assumption (i) of Theorem \ref{abstractshortsmooth} is a direct consequence of the definition of the spaces $Y_{i}^{k}$, Proposition \ref{biuytpfohit} and Theorem \ref{dorevenni}.

For the assumption (ii) of Theorem \ref{abstractshortsmooth}, we check first that $F:(Y_{1}^{k},Y_{0}^{k})_{\frac{1}{q},q}\to Y_{0}^{k}$ is smooth. If $q>2$, Lemma \ref{interfind} implies that, for $0<\epsilon<2-\frac{4}{q}$,
$$
(Y_{1}^{k},Y_{0}^{k})_{\frac{1}{q},q}\hookrightarrow\mathcal{H}_{p}^{s+k+4(1-\frac{1}{q})-\epsilon,\gamma+2+\delta_{0}(1-\frac{2}{q})}(\mathbb{B})\oplus\mathbb{C}_{\omega}.
$$

As the space $\mathcal{H}_{p}^{s+k+4(1-\frac{1}{q})-\epsilon,\gamma+2+\delta_{0}(1-\frac{2}{q})}(\mathbb{B})\oplus\mathbb{C}_{\omega}$
is a Banach algebra, due to Remark \ref{Bancalg},
we conclude that, if $v\in(Y_{1}^{k},Y_{0}^{k})_{\frac{1}{q},q}$,
then $v,v^{3}\in\mathcal{H}_{p}^{s+k+4(1-\frac{1}{q})-\epsilon,\gamma+2+\delta_{0}(1-\frac{2}{q})}(\mathbb{B})\oplus\mathbb{C}_{\omega}.$
Therefore, $F(v)\in\mathcal{H}_{p}^{s+k+2-\frac{4}{q}-\epsilon,\gamma+\delta_{0}(1-\frac{2}{q})}(\mathbb{B})\hookrightarrow Y_{0}^{k}$
as $\epsilon<2-\frac{4}{q}$. It is also clear that $F:(Y_{1}^{k},Y_{0}^{k})_{\frac{1}{q},q}\to Y_{0}^{k}$
is a $C^{\infty}$-function, as it is the composition $F=F_{3}\circ F_{2}\circ F_{1}$,
where 
$$
\begin{array}{lcl}
F_{1}:(Y_{1}^{k},Y_{0}^{k})_{\frac{1}{q},q}\to\mathcal{H}_{p}^{s+k+4(1-\frac{1}{q})-\epsilon,\gamma+2+\delta_{0}(1-\frac{2}{q})}(\mathbb{B})\oplus\mathbb{C}_{\omega}, & & F_{1}(u)=u,\\
F_{2}:\mathcal{H}_{p}^{s+k+4(1-\frac{1}{q})-\epsilon,\gamma+2+\delta_{0}(1-\frac{2}{q})}(\mathbb{B})\oplus\mathbb{C}_{\omega}\circlearrowleft, & & F_{2}(u)=u^{3}-u,\\
F_{3}:\mathcal{H}_{p}^{s+k+4(1-\frac{1}{q})-\epsilon,\gamma+2+\delta_{0}(1-\frac{2}{q})}(\mathbb{B})\oplus\mathbb{C}_{\omega}\to Y_{0}^{k}, & & \,F_{3}(u)=\underline{\Delta}_{s}u,
\end{array}
$$
 where $\circlearrowleft$ indicates a function with the same domain
and codomain. The functions $F_{1}$ and $F_{3}$ are linear, so they
are smooth as well. The function $F_{2}$ is smooth due the fact that, according to Remark \ref{Bancalg}, the space
$\mathcal{H}_{p}^{s+k+4(1-\frac{1}{q})-\epsilon,\gamma+2+\delta_{0}(1-\frac{2}{q})}(\mathbb{B})\oplus\mathbb{C}_{\omega}$
is a Banach algebra.

If $p=q=2$, then Lemma \ref{interfind} implies that
$$
(Y_{1}^{k},Y_{0}^{k})_{\frac{1}{2},2}=\mathcal{H}_{2}^{s+k+2,\gamma+2}(\mathbb{B})\oplus\mathbb{C}_{\omega}
$$
and the proof follows exactly as before.

The proof that $F:Y_{1}^{k}\to Y_{0}^{k+1}$ defines a continuous
map is very similar. In fact, if 
$$
u\in Y_{1}^{k}=X_{2}^{s+k}\hookrightarrow\mathcal{H}_{p}^{s+4+k,\gamma+2+\delta_{0}}(\mathbb{B})\oplus\mathbb{C}_{\omega},
$$
where $\delta_{0}$ is as in \eqref{asympembedd},
then $u^{3}-u$ also belong to $\mathcal{H}_{p}^{s+4+k,\gamma+2+\delta_{0}}(\mathbb{B})\oplus\mathbb{C}_{\omega}$,
as this space is a Banach algebra, and 
$$
\Delta(u^{3}-u)\in\mathcal{H}_{p}^{s+2+k,\gamma+\delta_{0}}(\mathbb{B})\hookrightarrow\mathcal{H}_{p}^{s+k+1,\gamma}(\mathbb{B})\oplus\mathbb{C}=Y_{0}^{k+1}.
$$
Moreover all these operations are continuous. \mbox{\ } \hfill $\square$

It is clear that for $t>0$, \eqref{extrareg} provides a much stronger regularity than \eqref{uexists}. The interesting point of \eqref{uexists} is that it provides information of how the solution behaves in the neighborhood of the initial data, that is, as $t\to0^{+}$. Moreover, \eqref{uexists}-\eqref{extrareg} have an important implication for the proof of global existence of solutions. We will show that long time existence of solutions is, in some sense, independent of $s$, $p$ and $q$.

\begin{corollary}
\label{indmaxint} For $j\in\{1,2\}$, let $s_{j}\ge0$, $p_{j}\in(1,\infty)$, $s_{j}+2>\frac{n+1}{p_{j}}$, $\gamma$ be as in \eqref{gamma}, $q_{j}>2$ if $p_{j}\ne2$ and $q_{j}\ge2$ if $p_{j}=2$. If for every $u_{0}\in(\mathcal{D}(\underline{\Delta}_{s_{1,}p_{1}}^{2}),\mathcal{H}_{p_{1}}^{s_{1},\gamma}(\mathbb{B}))_{\frac{1}{q_{1}},q_{1}}$ the maximal interval of existence of the solution 
$$
u\in W^{1,q_{1}}(0,T;\mathcal{H}_{p_{1}}^{s_{1},\gamma}(\mathbb{B})) \cap L^{q_{1}}(0,T;\mathcal{D}(\underline{\Delta}_{s_{1},p_{1}}^{2}))
$$
of the Cahn-Hilliard equation \eqref{CH1}-\eqref{CH2} is $[0,\infty)$, then for every $v_{0}\in(\mathcal{D}(\underline{\Delta}_{s_{2},p_{2}}^{2}),\mathcal{H}_{p_{2}}^{s_{2},\gamma}(\mathbb{B}))_{\frac{1}{q_{2}},q_{2}}$
the maximal interval of existence of the solution 
$$
v\in W^{1,q_{2}}(0,T;\mathcal{H}_{p_{2}}^{s_{2},\gamma}(\mathbb{B}))\cap L^{q_{2}}(0,T;\mathcal{D}(\underline{\Delta}_{s_{2},p_{2}}^{2}))
$$
is also $[0,\infty)$.
\end{corollary}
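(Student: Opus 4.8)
The plan is to use the instantaneous smoothing \eqref{extrareg} in order to replace $v_{0}$ by a \emph{smooth} datum at a small positive time, which by hypothesis produces a global solution, and then to propagate global existence back by uniqueness. I would let $[0,T_{\max})$ be the maximal interval of existence of the solution $v$ of \eqref{CH1}-\eqref{CH2} with $v(0)=v_{0}$ in the $(s_{2},p_{2},q_{2})$-framework, taking, as in Theorem \ref{ThLTS}, $U$ to be the whole interpolation space $(\mathcal{D}(\underline{\Delta}_{s_{2},p_{2}}^{2}),\mathcal{H}_{p_{2}}^{s_{2},\gamma}(\mathbb{B}))_{\frac{1}{q_{2}},q_{2}}$, on which $F(u)=\underline{\Delta}_{s_{2}}(u^{3}-u)$ is smooth. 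Assume $T_{\max}<\infty$ and fix $t_{0}\in(0,T_{\max})$.

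\emph{Step 1: $v(t_{0})$ is admissible in the $(s_{1},p_{1},q_{1})$-framework.} By \eqref{extrareg} one writes, uniquely and $C^{\infty}$ in $t$, $v(t)=v_{\mathcal{H}}(t)+v_{\mathbb{C}}(t)+v_{\mathcal{E}}(t)$ on $(0,T_{\max})$ with $v_{\mathbb{C}}(t)\in\mathbb{C}_{\omega}$, $v_{\mathcal{E}}(t)\in\mathcal{E}_{\Delta^{2},\gamma}$, $v_{\mathcal{H}}(t)\in\bigcap_{\varepsilon>0}\mathcal{H}_{p_{2}}^{\infty,\gamma+4-\varepsilon}(\mathbb{B})$. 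Now $\mathbb{C}_{\omega}$ and $\mathcal{E}_{\Delta^{2},\gamma}$ depend only on $h(\cdot)$ and $\gamma$, and by Lemma \ref{propms}.6 the space $\bigcap_{\varepsilon>0}\mathcal{H}_{p}^{\infty,\gamma+4-\varepsilon}(\mathbb{B})$ is the same for all $p$; since in addition $\mathbb{C}_{\omega},\mathcal{E}_{\Delta^{2},\gamma}\subset\mathcal{D}(\underline{\Delta}_{\sigma,p}^{2})$ and $\bigcap_{\varepsilon>0}\mathcal{H}_{p}^{\infty,\gamma+4-\varepsilon}(\mathbb{B})\hookrightarrow\mathcal{H}_{p}^{\sigma+4-\frac{4}{\eta}+\epsilon,\,\gamma+4-\frac{4}{\eta}+\epsilon}(\mathbb{B})$ for any $\sigma\ge0$, $p\in(1,\infty)$, admissible $\eta$ and small $\epsilon>0$, the same computation that yields the embedding in \eqref{u0reg} (via Lemma \ref{prop:interpolation}.2) gives $v(t_{0})\in(\mathcal{D}(\underline{\Delta}_{\sigma,p}^{2}),\mathcal{H}_{p}^{\sigma,\gamma}(\mathbb{B}))_{\frac{1}{\eta},\eta}$ for \emph{all} such $\sigma,p,\eta$; in particular $v(t_{0})$ is admissible initial data in every $(\widetilde{s},p_{1},\widetilde{q})$-framework.

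\emph{Step 2: pass to a common scale.} Since the interpolation spaces and the maximal $L^{q}$-regularity spaces are nested, a $(\widetilde{s},p_{1},\widetilde{q})$-solution (with $\widetilde{s}\ge s_{1}$, $\widetilde{q}\ge q_{1}$, $\widetilde{q}>2$) is also an $(s_{1},p_{1},q_{1})$-solution; so it coincides with the global one issued from the same datum and is therefore itself global by Corollary \ref{maximalinterval} together with \eqref{extrareg}. Fix such a $\widetilde{q}$, set $p_{*}=\max\{p_{1},p_{2}\}$, $s_{*}=\max\{s_{1},s_{2}\}$, and choose $\widetilde{s}$ so large that Lemma \ref{propms}.4 gives $\mathcal{H}_{p_{1}}^{\widetilde{s},\gamma}(\mathbb{B})\hookrightarrow\mathcal{H}_{p_{*}}^{s_{*},\gamma}(\mathbb{B})$ and $\mathcal{D}(\underline{\Delta}_{\widetilde{s},p_{1}}^{2})\hookrightarrow\mathcal{D}(\underline{\Delta}_{s_{*},p_{*}}^{2})$; the latter because $\mathcal{D}(\underline{\Delta}_{\widetilde{s},p_{1},\min}^{2})\hookrightarrow\mathcal{H}_{p_{1}}^{\widetilde{s}+4,\gamma+4-\varepsilon}(\mathbb{B})\hookrightarrow\mathcal{H}_{p_{*}}^{s_{*}+4,\gamma+4-\varepsilon}(\mathbb{B})$, $\Delta^{2}$ maps $\mathcal{D}(\underline{\Delta}_{\widetilde{s},p_{1},\min}^{2})$ into $\mathcal{H}_{p_{1}}^{\widetilde{s},\gamma}(\mathbb{B})\hookrightarrow\mathcal{H}_{p_{*}}^{s_{*},\gamma}(\mathbb{B})$, and $\mathbb{C}_{\omega}\oplus\mathcal{E}_{\Delta^{2},\gamma}$ is common. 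Hence the global $(\widetilde{s},p_{1},\widetilde{q})$-solution $\widetilde{v}$ with $\widetilde{v}(t_{0})=v(t_{0})$ is, on $[t_{0},\infty)$, a solution of \eqref{CH1}-\eqref{CH2} in the maximal $L^{\widetilde{q}}$-regularity space $W^{1,\widetilde{q}}(t_{0},T;\mathcal{H}_{p_{*}}^{s_{*},\gamma}(\mathbb{B}))\cap L^{\widetilde{q}}(t_{0},T;\mathcal{D}(\underline{\Delta}_{s_{*},p_{*}}^{2}))$. On the other hand, $t_{0}$ is an interior point of $(0,T_{\max})$, where $v$ is $C^{\infty}$, and $v(t)\in\mathcal{D}(\underline{\Delta}_{\sigma,p_{2}}^{2})$ for all $\sigma$ with $\Delta^{2}v_{\mathcal{H}}(t)=\Delta^{2}v(t)-\Delta^{2}v_{\mathbb{C}}(t)-\Delta^{2}v_{\mathcal{E}}(t)\in\mathcal{H}_{p_{2}}^{\infty,\gamma}(\mathbb{B})\hookrightarrow\mathcal{H}_{p_{*}}^{s_{*},\gamma}(\mathbb{B})$ (Lemma \ref{propms}.4, $p_{*}\ge p_{2}$, and $\Delta^{2}(\mathbb{C}_{\omega}\oplus\mathcal{E}_{\Delta^{2},\gamma})\subset\bigcap_{p}\mathcal{H}_{p}^{\infty,\gamma}(\mathbb{B})$); so $v(t)\in\mathcal{D}(\underline{\Delta}_{s_{*},p_{*}}^{2})$ for $t>0$, and $v|_{[t_{0},T]}$, $T<T_{\max}$, also lies in that maximal $L^{\widetilde{q}}$-regularity space.

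\emph{Step 3: conclude.} Then $v|_{[t_{0},T)}$ and $\widetilde{v}|_{[t_{0},T)}$ both solve \eqref{aqpp1}-\eqref{aqpp2} with $A=\underline{\Delta}_{s_{*}}^{2}$, $F(u)=\underline{\Delta}_{s_{*}}(u^{3}-u)$, initial time $t_{0}$ and initial value $v(t_{0})$ in the same maximal $L^{\widetilde{q}}$-regularity space, so by the uniqueness in Theorem \ref{thcl} they agree on $[t_{0},T)$ for every $T<T_{\max}$, i.e. $v=\widetilde{v}$ on $[t_{0},T_{\max})$. Since $\widetilde{v}$ is defined and $C^{\infty}$ on a neighborhood of $[t_{0},T_{\max}]$, the map $t\mapsto F(v(t),t)=\underline{\Delta}_{s_{2}}(v^{3}(t)-v(t))$ belongs to $C([0,T_{\max}];\mathcal{H}_{p_{2}}^{s_{2},\gamma}(\mathbb{B}))\subset L^{q_{2}}(0,T_{\max};\mathcal{H}_{p_{2}}^{s_{2},\gamma}(\mathbb{B}))$, which, since $U$ is the whole interpolation space, contradicts $T_{\max}<\infty$ by Corollary \ref{maximalinterval}. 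Hence $T_{\max}=\infty$. I expect the genuinely delicate step to be the regularity transfer in Step 2: one has to carry the (smooth) trajectory into the scale with the \emph{larger} integrability index $p_{*}$ \emph{at the exact weight} $\gamma$, and this relies precisely on the $p$-independence of $\bigcap_{\varepsilon}\mathcal{H}_{p}^{\infty,\gamma+4-\varepsilon}(\mathbb{B})$, of $\mathbb{C}_{\omega}$ and of $\mathcal{E}_{\Delta^{2},\gamma}$, on the mapping behaviour of $\Delta^{2}$ on the latter two, on the one-directional embeddings of Lemma \ref{propms}.4, and on the bootstrap of the hypothesis in $s$ and $q$ used above.
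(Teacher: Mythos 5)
Your proof is correct and follows the paper's skeleton — smooth the trajectory at an interior time $t_{0}$ via \eqref{extrareg}, feed $v(t_{0})$ (which by Lemma \ref{propms}.6 and Lemma \ref{prop:interpolation}.2 is admissible data in every framework) into the hypothesis, identify the restarted global solution with $v$ by uniqueness in a common maximal $L^{q}$-regularity space, and contradict criterion (ii) of Corollary \ref{maximalinterval}. The genuine difference is how you build that common space: you keep the weight $\gamma$ fixed and push $s$, $p$, $q$ upward to $(\widetilde{s},p_{*},\widetilde{q})$ via Lemma \ref{propms}.4, paying for the increase in $p$ with the infinite Sobolev order supplied by the smoothing; the paper instead keeps $(s_{1},p_{1},q_{1})$ and lowers the weight to some $\widetilde{\gamma}<\gamma$ still satisfying \eqref{gamma}, using Lemma \ref{propms}.4 \emph{and} 5 so that both orderings of $p_{1},p_{2}$ are absorbed at once. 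Your route costs an extra bootstrap — you must first argue that globality in the $(s_{1},p_{1},q_{1})$-framework propagates to the $(\widetilde{s},p_{1},\widetilde{q})$-framework, which you do correctly via \eqref{extrareg} and the blow-up criterion, whereas the paper compares directly in the hypothesis framework at the lowered weight; in exchange your comparison takes place at the original weight $\gamma$, which is conceptually cleaner. One small imprecision: to place $v(t)$ in $\mathcal{D}(\underline{\Delta}_{s_{*},p_{*}}^{2})$ you argue via $\Delta^{2}v_{\mathcal{H}}(t)\in\mathcal{H}_{p_{*}}^{s_{*},\gamma}(\mathbb{B})$, but that only locates $v(t)$ in the \emph{maximal} domain of $\Delta^{2}$, not in the chosen realization; the clean argument is that $v(t)\in\bigcap_{\sigma}\mathcal{D}(\underline{\Delta}_{\sigma,p_{2}}^{2})$ gives $v(t),\Delta v(t)\in\mathcal{H}_{p_{2}}^{\infty,\gamma+2}(\mathbb{B})\oplus\mathbb{C}_{\omega}\hookrightarrow\mathcal{H}_{p_{*}}^{s_{*}+2,\gamma+2}(\mathbb{B})\oplus\mathbb{C}_{\omega}=\mathcal{D}(\underline{\Delta}_{s_{*},p_{*}})$, whence $v(t)\in\mathcal{D}(\underline{\Delta}_{s_{*},p_{*}}^{2})$ by the definition of the bi-Laplacian domain. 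With that adjustment the argument is complete.
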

\begin{proof}
Let $v_{0}\in(\mathcal{D}(\underline{\Delta}_{s_{2},p_{2},\gamma}^{2}),\mathcal{H}_{p_{2}}^{s_{2},\gamma}(\mathbb{B}))_{\frac{1}{q_{2}},q_{2}}$ and $v:[0,T_{\max})\to\mathcal{H}_{p_{2}}^{s_{2},\gamma}(\mathbb{B})$ be the maximal solution of \eqref{CH1}-\eqref{CH2}. By \eqref{uexists}-\eqref{extrareg}, we know that 
$$
v\in\bigcap_{s_{2}>0}C^{\infty}((0,T_{\max});\mathcal{D}(\underline{\Delta}_{s_{2},p_{2},\gamma}^{2})) \quad \text{and}\quad v|_{(0,T)}\in W^{1,q_{2}}(0,T;\mathcal{H}_{p_{2}}^{s_{2},\gamma}(\mathbb{B}))\cap L^{q_{2}}(0,T;\mathcal{D}(\underline{\Delta}_{s_{2},p_{2},\gamma}^{2}))
$$
for all $T<T_{\max}$. Let us suppose that $T_{\max}<\infty$.

As $v(T_{\max}/2)\in\cap_{s>0}\mathcal{D}(\underline{\Delta}_{s,p_{2},\gamma}^{2})$,
we have $v(T_{\max}/2)\in\mathcal{H}_{p_{2}}^{\infty,\gamma+4-\varepsilon}(\mathbb{B})\oplus\mathbb{C}_{\omega}\oplus\mathcal{E}_{\Delta^{2},\gamma}$,
for all $\varepsilon>0$. Due to Lemma \ref{propms} (vi), we know that 
$$
\bigcap_{\varepsilon>0}\mathcal{H}_{p_{1}}^{\infty,\gamma+4-\varepsilon}(\mathbb{B})=\bigcap_{\varepsilon>0}\mathcal{H}_{p_{2}}^{\infty,\gamma+4-\varepsilon}(\mathbb{B}).
$$
Hence
$$
v(T_{\max}/2)\in\bigcap_{\varepsilon>0}\mathcal{H}_{p_{1}}^{\infty,\gamma+4-\varepsilon}(\mathbb{B})\oplus\mathbb{C}_{\omega}\oplus\mathcal{E}_{\Delta^{2},\gamma}.
$$

However, 
$$
\mathcal{H}_{p_{1}}^{\infty,\gamma+4-\varepsilon}(\mathbb{B})\hookrightarrow(\mathcal{H}_{p_{1}}^{s_{1}+4,\gamma+4}(\mathbb{B}),\mathcal{H}_{p_{1}}^{s_{1},\gamma}(\mathbb{B}))_{\frac{1}{q_{1}},q_{1}}\hookrightarrow(\mathcal{D}(\underline{\Delta}_{s_{1},p_{1},\gamma}^{2}),\mathcal{H}_{p_{1}}^{s_{1},\gamma}(\mathbb{B}))_{\frac{1}{q_{1}},q_{1}},
$$
 for sufficiently small $\varepsilon$, due to Lemma \ref{prop:interpolation} (ii).
As
$$
\mathbb{C}_{\omega}\oplus\mathcal{E}_{\Delta^{2},\gamma}\hookrightarrow(\mathcal{D}(\underline{\Delta}_{s_{1},p_{1},\gamma}^{2}),\mathcal{H}_{p_{1}}^{s_{1},\gamma}(\mathbb{B}))_{\frac{1}{q_{1}},q_{1}},
$$
we conclude that $v(T_{\max}/2)\in(\mathcal{D}(\underline{\Delta}_{s_{1},p_{1},\gamma}^{2}),\mathcal{H}_{p_{1}}^{s_{1},\gamma}(\mathbb{B}))_{\frac{1}{q_{1}},q_{1}}$.

By \eqref{uexists}-\eqref{extrareg} and our assumptions about global existence of solutions for $s_{1}$, $p_{1}$ and $q_{1}$, there
is a unique solution $u\in\cap_{s_{1}\ge0}C^{\infty}((T_{\max}/2,\infty),\mathcal{D}(\underline{\Delta}_{s_{1},p_{1},\gamma}^{2}))$
of Equation \eqref{CH1} such that $u(T_{\max}/2)=v(T_{\max}/2)$
and 
$$
u\in W^{1,q_{1}}(T_{\max}/2,\widetilde{T};\mathcal{H}_{p_{1}}^{s_{1},\gamma}(\mathbb{B}))\cap L^{q_{1}}(T_{\max}/2,\widetilde{T};\mathcal{D}(\underline{\Delta}_{s_{1},p_{1},\gamma}^{2}))
$$
for all $\widetilde{T}>T_{\max}/2$.

Let us show that $v|_{(T_{\max}/2,T_{\max})}=u|_{(T_{\max}/2,T_{\max})}$.
We choose $\widetilde{\gamma}<\gamma$ such that $\widetilde{\gamma}$ also
satisfies the conditions \eqref{gamma}. For $j\in\{1,2\}$, the embeddings of Lemma \ref{propms} (iv) and (v) imply that 
$$
\mathcal{D}(\underline{\Delta}_{s_{1},p_{1},\gamma})\hookrightarrow\mathcal{D}(\underline{\Delta}_{s_{1},p_{1},\widetilde{\gamma}})\quad \text{and}\quad \bigcap_{s\ge0}\mathcal{D}(\underline{\Delta}_{s,p_{2},\gamma})\hookrightarrow\mathcal{D}(\underline{\Delta}_{s_{1},p_{1},\widetilde{\gamma}}).
$$
Therefore 
$$
\mathcal{D}(\underline{\Delta}_{s_{1},p_{1},\gamma}^{2})\hookrightarrow\mathcal{D}(\underline{\Delta}_{s_{1},p_{1},\widetilde{\gamma}}^{2})
\quad \text{and} \quad \bigcap_{s\ge0}\mathcal{D}(\underline{\Delta}_{s,p_{2},\gamma}^{2})\hookrightarrow\mathcal{D}(\underline{\Delta}_{s_{1},p_{1},\widetilde{\gamma}}^{2}).
$$
Moreover, for $T_{\max}/2<T'<T_{\max}$, we have
\begin{eqnarray*}
\lefteqn{u|_{(T_{\max}/2,T')} \in W^{1,q_{1}}(T_{\max}/2,T';\mathcal{H}_{p_{1}}^{s_{1},\gamma}(\mathbb{B}))\cap L^{q_{1}}(T_{\max}/2,T';\mathcal{D}(\underline{\Delta}_{s_{1},p_{1},\gamma}^{2}))}\\
 && \hookrightarrow W^{1,q_{1}}(T_{\max}/2,T';\mathcal{H}_{p_{1}}^{s_{1},\widetilde{\gamma}}(\mathbb{B}))\cap L^{q_{1}}(T_{\max}/2,T';\mathcal{D}(\underline{\Delta}_{s_{1},p_{1},\widetilde{\gamma}}^{2}))
\end{eqnarray*}
and
\begin{eqnarray*}
\lefteqn{v|_{(T_{\max}/2,T')} \in \bigcap_{s\ge0}C^{\infty}([T_{\max}/2,T'];\mathcal{D}(\underline{\Delta}_{s,p_{2},\gamma}^{2}))\hookrightarrow C^{\infty}([T_{\max}/2,T'];\mathcal{D}(\underline{\Delta}_{s_{1},p_{1},\widetilde{\gamma}}^{2}))}\\
 && \hookrightarrow W^{1,q_{1}}(T_{\max}/2,T';\mathcal{H}_{p_{1}}^{s_{1},\widetilde{\gamma}}(\mathbb{B}))\cap L^{q_{1}}(T_{\max}/2,T';\mathcal{D}(\underline{\Delta}_{s_{1},p_{1},\widetilde{\gamma}}^{2})).
\end{eqnarray*}

Both solutions $v|_{(T_{\max}/2,T')}$ and $u|_{(T_{\max}/2,T')}$ belong to the space 
$$
W^{1,q_{1}}(T_{\max}/2,T';\mathcal{H}_{p_{1}}^{s_{1},\widetilde{\gamma}}(\mathbb{B}))\cap L^{q_{1}}(T_{\max}/2,T';\mathcal{D}(\underline{\Delta}_{s_{1},p_{1},\widetilde{\gamma}}^{2}))
$$ 
for all $T_{\max}/2<T'<T_{\max}$ and are solutions of \eqref{CH1} with initial condition at $T_{\max}/2$ given by 
$$
u(T_{\max}/2)=v(T_{\max}/2)\in(\mathcal{D}(\underline{\Delta}_{s_{1},p_{1},\widetilde{\gamma}}^{2}),\mathcal{H}_{p_{1}}^{s_{1},\widetilde{\gamma}}(\mathbb{B}))_{\frac{1}{q_{1}},q_{1}}.
$$
By uniqueness of Theorem \ref{thcl}, they must be equal.

Finally, we notice that, $u\in C((T_{\max}/2,\infty);\mathcal{H}_{p_{1}}^{\infty,\gamma+2+\delta_{0}}(\mathbb{B})\oplus\mathbb{C}_{\omega})$,
for some $\delta_{0}>0$ as in \eqref{asympembedd}.
As $\mathcal{H}_{p_{1}}^{\infty,\gamma+2+\delta_{0}}(\mathbb{B})\oplus\mathbb{C}_{\omega}$
is a Banach algebra due to Remark \ref{Bancalg}, we have 
$$
F(u)\in C((T_{\max}/2,\infty);\mathcal{H}_{p_{1}}^{\infty,\gamma+\delta_{0}}(\mathbb{B}))\hookrightarrow C((T_{\max}/2,\infty);\mathcal{H}_{p_{2}}^{s_{2},\gamma}(\mathbb{B})),
$$
where we have used again Lemma \ref{propms} and $F(u)=\underline{\Delta}_{s}(u^{3}-u)$. Therefore 
\begin{eqnarray*}
\lefteqn{ \Vert F(v)\Vert _{L^{q_{2}}(0,T_{\max};\mathcal{H}_{p_{2}}^{s_{2},\gamma}(\mathbb{B}))}}\\
 & \le & \Vert F(v)\Vert _{L^{q_{2}}(0,T_{\max}/2;\mathcal{H}_{p_{2}}^{s_{2},\gamma}(\mathbb{B}))}+\Vert F(u)\Vert _{L^{q_{2}}(T_{\max}/2,T_{\max};\mathcal{H}_{p_{2}}^{s_{2},\gamma}(\mathbb{B}))}<\infty.
\end{eqnarray*}
 Hence $[0,T_{\max})$, $T_{\max}<\infty$, cannot be the maximal interval of solution due to Corollary \ref{maximalinterval}. We conclude that $T_{\max}$ is necessarily infinity.
\end{proof}

\subsection{Long time solution}

We are now in position to study the existence of long time solutions of \eqref{CH1}-\eqref{CH2}. The proof of the existence of global solutions is limited to the case $\dim(\mathbb{B})\in\{ 2,3\}$ (see the comment after Proposition \ref{gradest} for further explanation). We proceed as follows:
\begin{enumerate}
\item First we show that for $p=2$ and $q\ge 2$ the solution is bounded
a priori in the space $\mathcal{H}^{1,1+\gamma}_{2}(\mathbb{B})$,
using the energy functional for the Cahn-Hilliard equation.
\item Next, the moment inequality, the singular Gr\"onwall inequality and the use of interpolation theory are employed to show that the solution is actually
bounded in $\mathcal{D}\big((c-\underline{\Delta}_{0})^{\frac{3+\nu}{2}}\big)$, when
$p=2$, $q$ is large and $\nu\in(0,1)$ is properly chosen.
\item Finally, by using \eqref{uexists}, \eqref{extrareg}, \eqref{contsolu} and Corollary \ref{indmaxint}, we obtain the correct estimate to guarantee global existence of solutions
when $\dim(\mathbb{B})=2$ or $3$. 
\end{enumerate}

A similar strategy to prove existence of global solutions can be used to study the Cahn-Hilliard equation on smooth bounded domains in the $L^2$-setting, see e.g. the lecture notes of Larrson \cite{Larsson}. The presence of conical singularities and the use of $L^p$-spaces turn this accomplishment much more technical. Here our regularity results \eqref{uexists}, \eqref{extrareg}, \eqref{contsolu} and Corollary \ref{indmaxint}, are crucial, not only for a very refined understanding of the regularity of the solutions, but also for proving the existence for long times.

\subsubsection{A priori estimate in $\mathcal{H}^{1,1+\gamma}_{2}(\mathbb{B})$}

For the first step, we use the energy functional. We start with the following version of the well-known Green's identity.

\begin{lemma}[Green's identity]\label{green} Assume that $w$ and $v$ belong to $\mathcal{H}_{2}^{1,1}(\mathbb{B})\oplus\mathbb{C}_{\omega}$ and let $\Delta v\in\mathcal{H}_{2}^{0,\gamma}(\mathbb{B})$, for some $\gamma>-1$. Then 
\begin{equation}
\int_{\mathbb{B}}\langle\nabla w,\nabla v\rangle_{g}d\mu_{g}=-\int_{\mathbb{B}}w\Delta vd\mu_{g},\label{eq:Green}
\end{equation}
where $\langle\cdot,\cdot\rangle_{g}$ and $d\mu_{g}$ denote, respectively, the Riemannian scalar product and the Riemannian measure with respect to the metric $g$. The gradient associated to the metric $g$ is denoted by $\nabla$.
\end{lemma}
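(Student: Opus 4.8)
The plan is to push everything into the interior $\mathbb{B}^{\circ}$ by a cut-off near the conical tips, apply the classical Green identity there, and then let the cut-off shrink; the only nontrivial point will be to show that the commutator term produced by the cut-off vanishes in the limit. Two preliminary observations make both sides of \eqref{eq:Green} well defined. First, if $u\in\mathcal{H}_{2}^{1,1}(\mathbb{B})\oplus\mathbb{C}_{\omega}$ then $\nabla u\in L^{2}(\mathbb{B},d\mu_{g})$ with $\|\nabla u\|_{L^{2}}\lesssim\|u\|_{\mathcal{H}_{2}^{1,1}(\mathbb{B})\oplus\mathbb{C}_{\omega}}$: in the collar one has $|\nabla f|_{g}^{2}=|\partial_{x}f|^{2}+x^{-2}|\nabla_{h(x)}f|_{h(x)}^{2}$ and $d\mu_{g}=x^{n}\sqrt{\det[h(x)]}\,dx\,dy$, so this follows by comparing with the integrability conditions \eqref{mellinsobolevinteger} defining $\mathcal{H}_{2}^{1,1}(\mathbb{B})$, while the gradient of an element of $\mathbb{C}_{\omega}$ is smooth and supported in a compact subset of $\mathbb{B}^{\circ}$. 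Second, $\mathcal{H}_{2}^{1,1}(\mathbb{B})\oplus\mathbb{C}_{\omega}\hookrightarrow\mathcal{H}_{2}^{0,0}(\mathbb{B})=L^{2}(\mathbb{B},d\mu_{g})$ by Lemma \ref{propms}.1 and the monotonicity in Lemma \ref{propms}, so $w$ and $\Delta v$ lie in $L^{2}(\mathbb{B},d\mu_{g})$ and the right-hand side of \eqref{eq:Green} converges absolutely.

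Next, fix $\phi\in C^{\infty}([0,\infty))$ with $\phi\equiv0$ on $[0,1]$ and $\phi\equiv1$ on $[2,\infty)$, set $\phi_{\varepsilon}(x)=\phi(x/\varepsilon)$ on the collar and $\phi_{\varepsilon}\equiv1$ off the collar, so that $|x\phi_{\varepsilon}'(x)|\le C$ uniformly in $\varepsilon$ and $\phi_{\varepsilon}'$ is supported in $\{\varepsilon\le x\le 2\varepsilon\}$. Then $\phi_{\varepsilon}w$ belongs to $H^{1}(\mathbb{B}^{\circ})$ and has compact support in $\mathbb{B}^{\circ}$, while $v\in H^{1}_{loc}(\mathbb{B}^{\circ})$ has distributional Laplacian $\Delta v\in L^{2}_{loc}(\mathbb{B}^{\circ})$; hence the interior Green identity (valid for test functions by integration by parts, and for $H^{1}$-functions with compact support in $\mathbb{B}^{\circ}$ by density) gives
\[
\int_{\mathbb{B}}\langle\nabla(\phi_{\varepsilon}w),\nabla v\rangle_{g}\,d\mu_{g}=-\int_{\mathbb{B}}\phi_{\varepsilon}w\,\Delta v\,d\mu_{g}.
\]
Writing $\nabla(\phi_{\varepsilon}w)=\phi_{\varepsilon}\nabla w+w\nabla\phi_{\varepsilon}$ and letting $\varepsilon\to0$, the terms $\int_{\mathbb{B}}\phi_{\varepsilon}\langle\nabla w,\nabla v\rangle_{g}\,d\mu_{g}$ and $\int_{\mathbb{B}}\phi_{\varepsilon}w\,\Delta v\,d\mu_{g}$ converge to $\int_{\mathbb{B}}\langle\nabla w,\nabla v\rangle_{g}\,d\mu_{g}$ and $\int_{\mathbb{B}}w\,\Delta v\,d\mu_{g}$ respectively, by dominated convergence using the two observations above. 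Thus \eqref{eq:Green} is reduced to showing that the commutator term $E_{\varepsilon}:=\int_{\mathbb{B}}w\,\langle\nabla\phi_{\varepsilon},\nabla v\rangle_{g}\,d\mu_{g}$ tends to $0$.

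This last estimate is the heart of the matter, and the delicate feature is that $w$ may contain a non-decaying $\mathbb{C}_{\omega}$-component; the point is that $\partial_{x}$ annihilates the constant part of $v$, so the factor coming from $v$ always contributes a vanishing tail. On the collar $\langle\nabla\phi_{\varepsilon},\nabla v\rangle_{g}=\phi_{\varepsilon}'(x)\,\partial_{x}v$, so, using $|\phi_{\varepsilon}'(x)|\le Cx^{-1}$ and that $\sqrt{\det[h(x)]}$ is bounded, one gets $|E_{\varepsilon}|\le C\int_{\varepsilon\le x\le 2\varepsilon}\int_{\partial\mathcal{B}}|w|\,|\partial_{x}v|\,x^{n-1}\,dy\,dx$. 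Decompose $w=w_{0}+c_{w}$ and $v=v_{0}+c_{v}$ with $w_{0},v_{0}\in\mathcal{H}_{2}^{1,1}(\mathbb{B})$, $c_{w},c_{v}\in\mathbb{C}_{\omega}$; on the collar $\partial_{x}v=\partial_{x}v_{0}$ since $c_{v}$ is locally constant there. Splitting $x^{n-1}=x^{(n-2)/2}\cdot x^{n/2}$ and applying Cauchy--Schwarz to each contribution, $|E_{\varepsilon}|$ is bounded by a constant times
\[
\Big(\int_{\varepsilon\le x\le 2\varepsilon}\int_{\partial\mathcal{B}}|w_{0}|^{2}x^{n-2}\,dy\,dx\Big)^{1/2}A_{\varepsilon}+\|c_{w}\|_{\infty}\Big(\int_{\varepsilon\le x\le 2\varepsilon}\int_{\partial\mathcal{B}}x^{n-2}\,dy\,dx\Big)^{1/2}A_{\varepsilon},\quad A_{\varepsilon}:=\Big(\int_{\varepsilon\le x\le 2\varepsilon}\int_{\partial\mathcal{B}}|\partial_{x}v_{0}|^{2}x^{n}\,dy\,dx\Big)^{1/2}.
\]
By \eqref{mellinsobolevinteger} the integrals $\int_{x<\delta}\int_{\partial\mathcal{B}}|w_{0}|^{2}x^{n-2}\,dy\,dx$ and $\int_{x<\delta}\int_{\partial\mathcal{B}}|\partial_{x}v_{0}|^{2}x^{n}\,dy\,dx$ are finite for small $\delta$, so $A_{\varepsilon}$ and the $w_{0}$-factor are tails of convergent integrals and tend to $0$, while the remaining factor $\int_{\varepsilon\le x\le 2\varepsilon}\int_{\partial\mathcal{B}}x^{n-2}\,dy\,dx$ equals $O(\varepsilon^{n-1})$ when $n\ge 2$ and $O(1)$ when $n=1$, hence stays bounded and is multiplied by $A_{\varepsilon}\to0$. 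Therefore $E_{\varepsilon}\to 0$ in every dimension, which completes the argument.
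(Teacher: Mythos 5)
Your proof is correct, and it takes a genuinely different route from the paper's. The paper first handles $w,v\in\mathcal{H}_{2}^{1,1}(\mathbb{B})$ by density of $C_{c}^{\infty}(\mathbb{B}^{\circ})$ together with the continuity of $\Delta:\mathcal{H}_{2}^{1,1}(\mathbb{B})\to\mathcal{H}_{2}^{-1,-1}(\mathbb{B})$ and the duality pairings of Lemma \ref{propms}.2; since $\mathbb{C}_{\omega}\hookrightarrow\mathcal{H}_{2}^{1,1}(\mathbb{B})$ when $n\ge2$, only $n=1$ remains, and there the paper invokes the structure of the maximal domain, $\mathcal{D}(\underline{\Delta}_{0,\max})=\mathcal{D}(\underline{\Delta}_{0,\min})\oplus\mathcal{E}_{\Delta,0}$, to upgrade the $\mathcal{H}_{2}^{1,1}$-component of $v$ to $\mathcal{H}_{2}^{2,1+\varepsilon_{0}}(\mathbb{B})$ before approximating each cross term separately. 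You instead truncate near the tips, apply the interior Green identity, and kill the commutator by an explicit weighted Cauchy--Schwarz estimate on the annulus $\{\varepsilon\le x\le2\varepsilon\}$; this treats all $n\ge1$ uniformly, uses only the first-order integrability encoded in \eqref{mellinsobolevinteger} plus the hypothesis $\Delta v\in\mathcal{H}_{2}^{0,0}(\mathbb{B})$ read distributionally, and entirely avoids the elliptic/maximal-domain input and the asymptotics space $\mathcal{E}_{\Delta,0}$. The trade-off is that the paper's argument stays inside the abstract duality framework it has already built (and reuses it later), whereas yours is more elementary and self-contained but requires the hands-on boundary-layer computation; your key observations --- that $\nabla$ annihilates the $\mathbb{C}_{\omega}$-part of $v$ on the support of $\phi_{\varepsilon}'$, and that the potentially non-decaying factor $\int_{\varepsilon}^{2\varepsilon}x^{n-2}\,dx$ (which is only $O(1)$ for $n=1$) is always paired with a vanishing tail $A_{\varepsilon}$ --- are exactly what makes the cut-off route close.
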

We note that if $n\ge2$, then $\mathbb{C}_{\omega}\hookrightarrow\mathcal{H}_{2}^{1,1}(\mathbb{B})$.
Therefore, for $\dim(\mathbb{B})\ge3$, we have $\mathcal{H}_{2}^{1,1}(\mathbb{B})\oplus\mathbb{C}_{\omega}=\mathcal{H}_{2}^{1,1}(\mathbb{B})$.
\begin{proof}
Let $\alpha,\beta\ge0$ and let 
$$
\langle\cdot,\cdot\rangle_{\mathcal{H}_{2}^{\alpha,\beta}(\mathbb{B})\times\mathcal{H}_{2}^{-\alpha,-\beta}(\mathbb{B})}:\mathcal{H}_{2}^{\alpha,\beta}(\mathbb{B})\times\mathcal{H}_{2}^{-\alpha,-\beta}(\mathbb{B})\to\mathbb{C}
$$
be the duality map as in Lemma \ref{propms} (ii).
	
Due to the fact that $C_{c}^{\infty}(\mathbb{B}^{\circ})$ is dense in $\mathcal{H}_{2}^{\widetilde{s},\widetilde{\gamma}}(\mathbb{B})$ for all $\widetilde{s},\widetilde{\gamma}\in\mathbb{R}$ and the inclusion $\mathcal{H}_{2}^{0,-\beta}(\mathbb{B})\hookrightarrow\mathcal{H}_{2}^{-\alpha,-\beta}(\mathbb{B})$ is continuous, the duality map satisfies 
\begin{equation}
\langle w,v\rangle_{\mathcal{H}_{2}^{\alpha,\beta}(\mathbb{B})\times\mathcal{H}_{2}^{-\alpha,-\beta}(\mathbb{B})}=\int_{\mathbb{B}}wv\,d\mu_{g},\quad w\in\mathcal{H}_{2}^{\alpha,\beta}(\mathbb{B}), v\in\mathcal{H}_{2}^{0,-\beta}(\mathbb{B}).\label{eq:dual}
\end{equation}
	
This can be easily proved by considering sequences $\left\{ w_{k}\right\} _{k\in\mathbb{N}}$ and $\left\{ v_{k}\right\} _{k\in\mathbb{N}}$ in $C_{c}^{\infty}(\mathbb{B}^{\circ})$ that converge to $w$ in $\mathcal{H}_{2}^{\alpha,\beta}(\mathbb{B})$ and to $v$ in $\mathcal{H}_{2}^{0,-\beta}(\mathbb{B})$, respectively.
	
If $w$ and $v$ belong to $C_{c}^{\infty}(\mathbb{B}^{\circ})$, then the divergence theorem implies that 
$$
\int_{\mathbb{B}}\langle\nabla w,\nabla v\rangle_{g}d\mu_{g}=-\int_{\mathbb{B}}w\Delta v\,d\mu_{g}=-\langle w,\Delta v\rangle_{\mathcal{H}_{2}^{1,1}(\mathbb{B})\times\mathcal{H}_{2}^{-1,-1}(\mathbb{B})}.
$$
	
Therefore if $w$ and $v$ belong to $\mathcal{H}_{2}^{1,1}(\mathbb{B})$, then \eqref{eq:Green} follows easily from the fact that $C_{c}^{\infty}(\mathbb{B}^{\circ})$ is dense in $\mathcal{H}_{2}^{1,1}(\mathbb{B})$, $\Delta:\mathcal{H}_{2}^{1,1}(\mathbb{B})\to\mathcal{H}_{2}^{-1,-1}(\mathbb{B})$ is continuous, $\Delta v\in\mathcal{H}_{2}^{0,\gamma}(\mathbb{B})\subset\mathcal{H}_{2}^{0,-1}(\mathbb{B})$ and \eqref{eq:dual} holds for $\alpha=\beta=1$.
	
When $n=1$, the proof is more complicated. In fact, in this case, $\mathbb{C}_{\omega}\hookrightarrow\cap_{\varepsilon>0}\mathcal{H}_{2}^{\infty,1-\varepsilon}(\mathbb{B})$, but, in general, $u\in\mathbb{C}_{\omega}$ does not belong to $\mathcal{H}_{2}^{s,1}(\mathbb{B})$, for any $s\in\mathbb{R}$.
	
We first note that $\{\cdot,\cdot\}:C_{c}^{\infty}(\mathbb{B}^{\circ})\times C_{c}^{\infty}(\mathbb{B}^{\circ})\to\mathbb{C}$ given by 
$$
(u,v)\mapsto\{u,v\}=\int_{\mathbb{B}}\left\langle \nabla u,\nabla v\right\rangle _{g}d\mu_{g}
$$
has a unique extension to a continuous bilinear map $\{\cdot,\cdot\}:\mathcal{H}_{2}^{1,1+\beta}(\mathbb{B})\times\mathcal{H}_{2}^{1,1-\beta}(\mathbb{B})\to\mathbb{C}$, for all $\beta\ge0$; we use also the notation $\{u,v\}\equiv\left\langle \nabla u,\nabla v\right\rangle _{\mathcal{H}_{2}^{1,1+\beta}(\mathbb{B})\times\mathcal{H}_{2}^{1,1-\beta}(\mathbb{B})}$. This follows from Lemma \ref{propms} (ii), \eqref{mellinsobolevinteger} and the fact that, in local coordinates on the collar neighborhood, $\left\langle \nabla u,\nabla v\right\rangle _{g}$ is given by 
\begin{equation}
\frac{1}{x^{2}}\Big((x\partial_{x}u)(x\partial_{x}v)+\sum_{i,j=1}^{n}h^{ij}(x,y)(\partial_{y_{i}}u)(\partial_{y_{j}}v)\Big).\label{ipn}
\end{equation}
	
Let $w$ and $v$ belong to $\mathcal{H}_{2}^{1,1}(\mathbb{B})\oplus\mathbb{C}_{\omega}$ and $\Delta v\in\mathcal{H}_{2}^{0,\gamma}(\mathbb{B})$ for some $\gamma>-1$. Without loss of generality, we suppose that $\gamma<1$. Then we have that $w=w_{1}+a$ and $v=v_{1}+b$, where $v_{1}$ and $w_{1}$ belong to $\mathcal{H}_{2}^{1,1}(\mathbb{B})$, $\Delta v_{1}\in\mathcal{H}_{2}^{0,\gamma}(\mathbb{B})$ and $a$, $b\in\mathbb{C}_{\omega}$. Therefore 
\begin{eqnarray*}
\lefteqn{\int_{\mathbb{B}}\langle\nabla w,\nabla v\rangle_{g}d\mu_{g}}\\
& \overset{(1)}{=} & -\int_{\mathbb{B}}w_{1}\Delta v_{1}\,d\mu_{g}+\int_{\mathbb{B}}\langle\nabla a,\nabla v_{1}\rangle_{g}d\mu_{g}+\int_{\mathbb{B}}\langle\nabla w_{1},\nabla b\rangle_{g}d\mu_{g}+\int_{\mathbb{B}}\langle\nabla a,\nabla b\rangle_{g}d\mu_{g}\\
& \overset{(2)}{=} & -\int_{\mathbb{B}}w\Delta v\,d\mu_{g}+\int_{\mathbb{B}}a\Delta v_{1}\,d\mu_{g}+\int_{\mathbb{B}}\langle\nabla a,\nabla v_{1}\rangle_{g}d\mu_{g}\\
&& +\int_{\mathbb{B}}w_{1}\Delta b\,d\mu_{g}+\int_{\mathbb{B}}\langle\nabla w_{1},\nabla b\rangle_{g}d\mu_{g}+\int_{\mathbb{B}}a\Delta b\,d\mu_{g}+\int_{\mathbb{B}}\langle\nabla a,\nabla b\rangle_{g}d\mu_{g}.
\end{eqnarray*}
	
The equality $(2)$ is a direct consequence of the definitions of $w$, $v$, $w_{1}$, $v_{1}$, $a$ and $b$. In $(1)$, we used our previous result, as $v_{1}$ and $w_{1}$ belongs to $\mathcal{H}_{2}^{1,1}(\mathbb{B})$.
	
Let us first show that 
$$
\int_{\mathbb{B}}a\Delta v_{1}\,d\mu_{g}+\int_{\mathbb{B}}\langle\nabla a,\nabla v_{1}\rangle_{g}d\mu_{g}=0.
$$
We know that $v_{1}\in\mathcal{H}_{2}^{1,1}(\mathbb{B})\hookrightarrow\mathcal{H}_{2}^{0,\gamma}(\mathbb{B})$ and that $\Delta v_{1}\in\mathcal{H}_{2}^{0,\gamma}(\mathbb{B})$. Therefore $v_{1}\in\mathcal{D}(\underline{\Delta}_{0,2,\gamma,\max})$, where $\underline{\Delta}_{0,2,\gamma,\max}$ is the maximal realization of the Laplacian in $\mathcal{H}_{2}^{0,\gamma}(\mathbb{B})$. By the discussion at the beginning of Section \ref{realizations}, we have that 
$$
\mathcal{D}(\underline{\Delta}_{0,2,\gamma,\max})=\mathcal{D}(\underline{\Delta}_{0,2,\gamma,\min})\oplus\mathcal{E}_{\Delta,\gamma},
$$
where $\mathcal{D}(\underline{\Delta}_{0,2,\gamma,\min})\subset\cap_{\varepsilon>0}\mathcal{H}_{2}^{2,\gamma+2-\varepsilon}(\mathbb{B})$ and $\mathcal{E}_{\Delta,\gamma}$ is a finite dimensional space consisting of functions of the form $\omega(x)c(y)x^{-\rho}\log^{k}(x)$, with $\rho\in\{z\in\mathbb{C}\,|\,\text{Re}(z)\in[-1-\gamma,1-\gamma)\}$ and $k\in\{0,1\}$. As $v_{1}\in\mathcal{H}_{2}^{1,1}(\mathbb{B})\cap\mathcal{D}(\underline{\Delta}_{0,2,\gamma,\max})$, we conclude that there exists an $0<\varepsilon_{0}<1$ such that $v_{1}\in\mathcal{H}_{2}^{2,1+\varepsilon_{0}}(\mathbb{B})$.
	
Now let $\{v_{1,k}\}_{k\in\mathbb{N}}\in C_{c}^{\infty}(\mathbb{B}^{\circ})$ be a sequence such that $v_{1,k}\to v_{1}$ in $\mathcal{H}_{2}^{2,1+\varepsilon_{0}}(\mathbb{B})$. Then, as $\Delta:\mathcal{H}_{2}^{2,1+\varepsilon_{0}}(\mathbb{B})\to\mathcal{H}_{2}^{0,-1+\varepsilon_{0}}(\mathbb{B})$ is continuous, we have 
\begin{eqnarray*}
\lefteqn{\int_{\mathbb{B}}a\Delta v_{1}\,d\mu_{g}=\langle a,\Delta v_{1}\rangle_{\mathcal{H}_{2}^{0,1-\varepsilon_{0}}(\mathbb{B})\times\mathcal{H}_{2}^{0,-1+\varepsilon_{0}}(\mathbb{B})}=\lim_{k\to\infty}\langle a,\Delta v_{1,k}\rangle_{\mathcal{H}_{2}^{0,1-\varepsilon_{0}}(\mathbb{B})\times\mathcal{H}_{2}^{0,-1+\varepsilon_{0}}(\mathbb{B})}}\\
& = & \lim_{k\to\infty}\int_{\mathbb{B}}a\Delta v_{1,k}d\mu_{g}\overset{(3)}{=}-\lim_{k\to\infty}\int_{\mathbb{B}}\langle\nabla a,\nabla v_{1,k}\rangle_{g}d\mu_{g}\\
& = & -\lim_{k\to\infty}\langle\nabla v_{1,k},\nabla a\rangle_{\mathcal{H}_{2}^{1,1+\varepsilon_{0}}(\mathbb{B})\times\mathcal{H}_{2}^{1,1-\varepsilon_{0}}(\mathbb{B})}=-\langle\nabla v_{1},\nabla a\rangle_{\mathcal{H}_{2}^{1,1+\varepsilon_{0}}(\mathbb{B})\times\mathcal{H}_{2}^{1,1-\varepsilon_{0}}(\mathbb{B})},
\end{eqnarray*}
where we used the divergence theorem in $(3)$ and \eqref{eq:dual} several times.
	
To prove that 
$$
\int_{\mathbb{B}}w_{1}\Delta b\,d\mu_{g}+\int_{\mathbb{B}}\langle\nabla w_{1},\nabla b\rangle_{g}d\mu_{g}=0, 
$$
we consider a sequence $\{w_{1,k}\}_{k\in\mathbb{N}}\in C_{c}^{\infty}(\mathbb{B}^{\circ})$ such that $w_{1,k}\to w_{1}$ in $\mathcal{H}_{2}^{1,1}(\mathbb{B})$ while for the proof that $$
\ensuremath{\int_{\mathbb{B}}a\Delta b\,d\mu_{g}+\int_{\mathbb{B}}\langle\nabla a,\nabla b\rangle_{g}d\mu_{g}=0}, 
$$
we use a sequence $\{b_{k}\}_{k\in\mathbb{N}}\in C_{c}^{\infty}(\mathbb{B}^{\circ})$ such that $b_{k}\to b$ in $\mathcal{H}_{2}^{2,1-\varepsilon_{0}}(\mathbb{B})$. The arguments are then very similar as before.
\end{proof}

\begin{proposition}[Gradient estimate]\label{gradest} Let $s=0$,
$p=2$, $\dim(\mathbb{B})\in\{ 2,3\} $, $q\ge2$
and $\gamma$ be chosen as in \eqref{gamma} with the additional assumption: $\gamma<-\frac{1}{2}$ if $\dim(\mathbb{B})=2$ and $\gamma<-\frac{1}{4}$ if $\dim(\mathbb{B})=3$. Then, the solution $u$ of \eqref{CH1}-\eqref{CH2}
given by \eqref{uexists}, \eqref{extrareg}, \eqref{contsolu} on $[0,T]\times\mathbb{B}$
belongs to $C([0,T];\mathcal{H}_{2}^{1,1+\gamma}(\mathbb{B}))$
and satisfies
\begin{gather}\label{gradbound333}
\|u\|_{C([0,T];\mathcal{H}_{2}^{1,1+\gamma}(\mathbb{B}))}\leq C
\end{gather}
for a suitable constant $C$ depending only on $\|\sqrt{\langle\nabla u_{0},\nabla u_{0}\rangle_{g}}\|_{\mathcal{H}_{2}^{0,0}(\mathbb{B})}$
and $\|u_{0}^{2}-1\|_{\mathcal{H}_{2}^{0,0}(\mathbb{B})}$.
\end{proposition}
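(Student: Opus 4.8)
The plan is to run the classical energy (Lyapunov) argument for the Cahn--Hilliard equation, using the instantaneous smoothing \eqref{extrareg} to make the formal computation rigorous and \eqref{contsolu} to transfer the resulting a priori bound back to the initial time. The continuity statement $u\in C([0,T];\mathcal{H}_{2}^{1,1}(\mathbb{B})\oplus\mathbb{C}_{\omega})$ comes for free: by \eqref{contsolu} (and, when $q>2$, by the stronger embedding stated there together with Lemma \ref{propms}.4) one has $u\in C([0,T];\mathcal{H}_{2}^{2,\gamma+2}(\mathbb{B})\oplus\mathbb{C}_{\omega})$, and since $\gamma>\frac{n-3}{2}\geq-1$ we have $\mathcal{H}_{2}^{2,\gamma+2}(\mathbb{B})\hookrightarrow\mathcal{H}_{2}^{1,1}(\mathbb{B})$ by Lemma \ref{propms}, while $\mathbb{C}_{\omega}\hookrightarrow\mathcal{H}_{2}^{1,1}(\mathbb{B})$ when $n=2$; in either case $\mathcal{H}_{2}^{2,\gamma+2}(\mathbb{B})\oplus\mathbb{C}_{\omega}\hookrightarrow\mathcal{H}_{2}^{1,1}(\mathbb{B})\oplus\mathbb{C}_{\omega}$. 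Combined with $\mathcal{H}_{2}^{2,\gamma+2}(\mathbb{B})\hookrightarrow C(\mathbb{B})$ (Remark \ref{Bancalg}) this also shows $u_{0}\in(\mathcal{H}_{2}^{1,1}(\mathbb{B})\oplus\mathbb{C}_{\omega})\cap C(\mathbb{B})$, so the energy
\[
E(v)=\frac{1}{2}\int_{\mathbb{B}}\langle\nabla v,\nabla v\rangle_{g}\,d\mu_{g}+\frac{1}{4}\int_{\mathbb{B}}(v^{2}-1)^{2}\,d\mu_{g}
\]
is finite at $v=u_{0}$, with $E(u_{0})$ bounded precisely in terms of the two quantities appearing in the statement.

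Next I would differentiate $E$ along the solution. Fix $t\in(0,T)$ and put $\mu(t)=-\Delta u(t)+u(t)^{3}-u(t)$. By \eqref{extrareg}, $u\in C^{\infty}((0,T);\mathcal{D}(\underline{\Delta}_{s}^{2}))$ for every $s\geq0$, so $u(t)$ and $u'(t)$ lie in $\mathcal{H}_{2}^{\infty,\gamma+2+\delta_{0}}(\mathbb{B})\oplus\mathbb{C}_{\omega}$, which by Remark \ref{Bancalg} is a Banach algebra embedded in $(\mathcal{H}_{2}^{1,1}(\mathbb{B})\oplus\mathbb{C}_{\omega})\cap C(\mathbb{B})$; hence $u(t)^{3}-u(t)$ lies in the same algebra, $\mu(t)\in\mathcal{D}(\underline{\Delta}_{s})=\mathcal{H}_{2}^{s+2,\gamma+2}(\mathbb{B})\oplus\mathbb{C}_{\omega}\hookrightarrow\mathcal{H}_{2}^{1,1}(\mathbb{B})\oplus\mathbb{C}_{\omega}$, and $\Delta\mu(t)=u'(t)\in\mathcal{H}_{2}^{0,0}(\mathbb{B})$ by \eqref{CH1}. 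Since $t\mapsto u(t)$ is a smooth curve into this algebra and the maps $v\mapsto\int_{\mathbb{B}}\langle\nabla v,\nabla v\rangle_{g}d\mu_{g}$ on $\mathcal{H}_{2}^{1,1}(\mathbb{B})\oplus\mathbb{C}_{\omega}$ and $v\mapsto\int_{\mathbb{B}}(v^{2}-1)^{2}d\mu_{g}$ on the algebra are (smooth) polynomial functionals, $t\mapsto E(u(t))$ is $C^{1}$ on $(0,T)$, and applying Green's identity (Lemma \ref{green}) first with $w=u'(t),\,v=u(t)$ and then with $w=v=\mu(t)$ I would obtain
\[
\frac{d}{dt}E(u(t))=\int_{\mathbb{B}}u'(t)\,\mu(t)\,d\mu_{g}=\int_{\mathbb{B}}\big(\Delta\mu(t)\big)\,\mu(t)\,d\mu_{g}=-\int_{\mathbb{B}}\langle\nabla\mu(t),\nabla\mu(t)\rangle_{g}\,d\mu_{g}\leq0.
\]
Integrating over $[t_{1},t]\subset(0,T)$ gives $E(u(t))\leq E(u(t_{1}))$; letting $t_{1}\to0^{+}$ and using that $u\in C([0,T];\mathcal{H}_{2}^{2,\gamma+2}(\mathbb{B})\oplus\mathbb{C}_{\omega})$ embeds into $C([0,T];(\mathcal{H}_{2}^{1,1}(\mathbb{B})\oplus\mathbb{C}_{\omega})\cap C(\mathbb{B}))$ — so $\nabla u(t_{1})\to\nabla u_{0}$ in $\mathcal{H}_{2}^{0,0}(\mathbb{B})$ and $u(t_{1})^{2}\to u_{0}^{2}$ in $C(\mathbb{B})\hookrightarrow\mathcal{H}_{2}^{0,0}(\mathbb{B})$ — one concludes $E(u(t))\leq E(u_{0})$ for all $t\in[0,T]$.

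Finally I would convert this into the asserted bound. From $E(u(t))\leq E(u_{0})$ one reads off $\|\sqrt{\langle\nabla u(t),\nabla u(t)\rangle_{g}}\|_{\mathcal{H}_{2}^{0,0}(\mathbb{B})}^{2}\leq 2E(u_{0})$ and $\|u(t)^{2}-1\|_{\mathcal{H}_{2}^{0,0}(\mathbb{B})}^{2}\leq 4E(u_{0})$, whence $\|u(t)\|_{L^{4}(\mathbb{B})}^{2}\leq 2E(u_{0})^{1/2}+\mathrm{vol}(\mathbb{B})^{1/2}$ and, by H\"older, $\|u(t)\|_{\mathcal{H}_{2}^{0,0}(\mathbb{B})}$ is controlled by $E(u_{0})$ and $\mathrm{vol}(\mathbb{B})$. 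I would then invoke the fact that on $\mathcal{H}_{2}^{1,1}(\mathbb{B})\oplus\mathbb{C}_{\omega}$ the norm is equivalent to $v\mapsto(\|v\|_{\mathcal{H}_{2}^{0,0}(\mathbb{B})}^{2}+\int_{\mathbb{B}}\langle\nabla v,\nabla v\rangle_{g}d\mu_{g})^{1/2}$ — the identification of $\mathcal{H}_{2}^{1,1}(\mathbb{B})\oplus\mathbb{C}_{\omega}$ with the domain of the square root of a suitable self-adjoint shifted Laplacian in $\mathcal{H}_{2}^{0,0}(\mathbb{B})$, which follows by combining the bounded imaginary powers of Proposition \ref{bipforbi} (taken with $\gamma=0$), complex interpolation of Mellin--Sobolev spaces (Lemma \ref{prop:interpolation}), and Green's identity (Lemma \ref{green}). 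This yields $\|u\|_{C([0,T];\mathcal{H}_{2}^{1,1}(\mathbb{B})\oplus\mathbb{C}_{\omega})}\leq C$ with $C$ depending only on $E(u_{0})$, i.e. on $\|\sqrt{\langle\nabla u_{0},\nabla u_{0}\rangle_{g}}\|_{\mathcal{H}_{2}^{0,0}(\mathbb{B})}$ and $\|u_{0}^{2}-1\|_{\mathcal{H}_{2}^{0,0}(\mathbb{B})}$.

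The step I expect to be the main obstacle is the rigorous justification of the energy identity: one must check that every integral and duality pairing occurring in it is well defined and that both applications of Green's identity are legitimate, which is exactly where \eqref{extrareg} and the Banach algebra structure of $\mathcal{H}_{2}^{\infty,\gamma+2+\delta_{0}}(\mathbb{B})\oplus\mathbb{C}_{\omega}$ are needed (to force $\mu(t)\in\mathcal{H}_{2}^{1,1}(\mathbb{B})\oplus\mathbb{C}_{\omega}$ with $\Delta\mu(t)\in\mathcal{H}_{2}^{0,0}(\mathbb{B})$, despite $\gamma$ possibly being negative), and where \eqref{contsolu} is needed for the continuity of the energy up to $t=0$; a secondary, more bookkeeping-type point is the identification of $\mathcal{H}_{2}^{1,1}(\mathbb{B})\oplus\mathbb{C}_{\omega}$ with the relevant form domain, which turns the dissipation bound into the claimed norm bound.
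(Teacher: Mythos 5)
Your proposal is correct and follows essentially the same route as the paper's proof: the same energy functional, the same two applications of Green's identity (Lemma \ref{green}) justified by the smoothing \eqref{extrareg} and the Banach algebra structure, monotonicity of the energy extended to $t=0$ by the continuity from \eqref{contsolu}, and finally the equivalence of the $\mathcal{H}_{2}^{1,1}(\mathbb{B})\oplus\mathbb{C}_{\omega}$ norm with $\bigl(\|v\|_{\mathcal{H}_{2}^{0,0}(\mathbb{B})}^{2}+\int_{\mathbb{B}}\langle\nabla v,\nabla v\rangle_{g}\,d\mu_{g}\bigr)^{1/2}$ (which the paper obtains more directly from \eqref{mellinsobolevinteger} and \eqref{ipn} rather than via form domains). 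No gaps.
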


We note that \eqref{uexists}, \eqref{extrareg}, \eqref{contsolu} require that $s+2>\frac{n+1}{p}$. In Proposition \ref{gradest}, we have $s=0$ and $p=2$, therefore we need $n<3$. This is why we ask for $\dim(\mathbb{B})\in\{2,3\}$. Finally we note also that our choice of $\gamma$ is always possible, according to \eqref{gamma}.

\begin{proof}
By \eqref{extrareg}-\eqref{contsolu} we know that 
$$
u\in C([0,T];\mathcal{H}_{2}^{2,\gamma+2}(\mathbb{B})\oplus\mathbb{C}_{\omega})\cap C^{\infty}((0,T);\mathcal{H}_{2}^{2,\gamma+2}(\mathbb{B})\oplus\mathbb{C}_{\omega}).
$$ 
As $\mathcal{H}_{2}^{2,\gamma+2}(\mathbb{B})\oplus\mathbb{C}_{\omega}\hookrightarrow\mathcal{H}_{2}^{1,1+\gamma}(\mathbb{B})$, we conclude that $u\in C([0,T];\mathcal{H}_{2}^{1,1+\gamma}(\mathbb{B}))$.

In order to prove the boundedness of the solution in $\mathcal{H}_{2}^{1,1+\gamma}(\mathbb{B})$,
we define the following energy functional $\Phi:\mathcal{H}_{2}^{2,\gamma+2}(\mathbb{B})\oplus\mathbb{C}_{\omega}\to\mathbb{R}$
given by
$$
\Phi(u)=\frac{1}{2}\int_{\mathbb{B}}\langle\nabla u,\nabla u\rangle_{g}d\mu_{g}+\frac{1}{4}\int_{\mathbb{B}}(u^{2}-1)^{2}d\mu_{g}.
$$

The functional is well defined. In fact, $\langle\nabla u,\nabla u\rangle_{g}$ is integrable. Moreover, as $p=2$ and $\dim(\mathbb{B})\in\{ 2,3\} $,
that is, $n=1$ or $2$, Remark \ref{Bancalg} tells
us that $\mathcal{H}_{2}^{2,\gamma+2}(\mathbb{B})\oplus\mathbb{C}_{\omega}$
is a Banach algebra. Therefore 
$$
u^{2}-1\in\mathcal{H}_{2}^{2,\gamma+2}(\mathbb{B})\oplus\mathbb{C}_{\omega}\hookrightarrow\mathcal{H}_{2}^{0,0}(\mathbb{B}).
$$

As $\Phi\in C^{1}(\mathcal{H}_{2}^{2,\gamma+2}(\mathbb{B})\oplus\mathbb{C}_{\omega};\mathbb{R})$, the function $[0,T]\ni t\mapsto\Phi(u(t))\in\mathbb{R}$ is continuous and $(0,T)\ni t\mapsto\Phi(u(t))\in\mathbb{R}$ is $C^{1}$.

For $t\in(0,T)$, we have
\begin{eqnarray*}
\partial_{t}\Phi(u(t))&=&\int_{\mathbb{B}}\langle\nabla u'(t),\nabla u(t)\rangle_{g}d\mu_{g}+\int_{\mathbb{B}}u(t)u'(t)(u^{2}(t)-1)d\mu_{g}\\
&=&\int_{\mathbb{B}}u'(t)(-\Delta u(t)+u^{3}(t)-u(t))d\mu_{g},
\end{eqnarray*}
where we used Lemma \ref{green} in the last equality, as $u$ and $u'$ belong to $\mathcal{H}_{2}^{2,\gamma+2}(\mathbb{B})\oplus\mathbb{C}_{\omega}\hookrightarrow\mathcal{H}_{2}^{1,1}(\mathbb{B})\oplus\mathbb{C}_{\omega}$ 
and $\Delta u\in\mathcal{H}_{2}^{2,\gamma+2}(\mathbb{B})\oplus\mathbb{C}_{\omega}\hookrightarrow\mathcal{H}_{2}^{0,0}(\mathbb{B})$, due to the fact that $u\in\mathcal{D}(\underline{\Delta}_{0,2,\gamma}^{2})$.

Denote $J(u)=-\Delta u+u^{3}-u$. For $t\in(0,T)$, $u(t)$ and $u'(t)\in\mathcal{D}(\underline{\Delta}_{0,2,\gamma}^{2})$, due to \eqref{extrareg}. Hence $J(u)\in\mathcal{H}_{2}^{2,2+\gamma}(\mathbb{B})\oplus\mathbb{C}_{\omega}$ and we can use again Lemma \ref{green} and \eqref{CH1} to obtain
$$
\int_{\mathbb{B}}u'J(u)d\mu_{g}=\int_{\mathbb{B}}\Delta J(u)J(u)d\mu_{g}=-\int_{\mathbb{B}}\langle\nabla J(u),\nabla J(u)\rangle_{g}d\mu_{g}\le0.
$$

Therefore, $\partial_{t}\Phi(u)\le0$ and 
\begin{gather}
\Phi(u(t))\leq\Phi(u_{0}),\quad t\in[0,T].\label{Fbound}
\end{gather}

We conclude that 
\begin{equation}\label{estimatenablau}
\int_{\mathbb{B}}\langle\nabla u,\nabla u\rangle_{g}d\mu_{g}\le2\Phi(u_{0})
\end{equation}
and
\begin{eqnarray}\nonumber
\int_{\mathbb{B}}u^{2}d\mu_{g}&=&\int_{\mathbb{B}}(u^{2}-1)d\mu_{g}+\int_{\mathbb{B}}d\mu_{g}\\\label{estimateu2}
&\le&\bigg(\int_{\mathbb{B}}d\mu_{g}\bigg)^{\frac{1}{2}}\bigg(\int_{\mathbb{B}}(u^{2}-1)^{2}d\mu_{g}\bigg)^{\frac{1}{2}}+\int_{\mathbb{B}}d\mu_{g} \le\bigg(\int_{\mathbb{B}}d\mu_{g}\bigg)^{\frac{1}{2}}(4\Phi(u_{0}))^{\frac{1}{2}}+\int_{\mathbb{B}}d\mu_{g}.
\end{eqnarray}

Finally, we show that the integral of $u^{4}$ is also uniformly bounded. In fact
\begin{eqnarray}\label{eq:Estimateintu4}
\int_{\mathbb{B}}u^{4}d\mu_{g}&=&\int_{\mathbb{B}}\left(u^{2}-1+1\right)^{2}d\mu_{g}\le2\int_{\mathbb{B}}\left(\left(u^{2}-1\right)^{2}+1\right)d\mu_{g}\\
&\le&8\Phi\left(u\left(t\right)\right)+2\int_{\mathbb{B}}d\mu_{g}\le8\Phi\left(u_{0}\right)+2\int_{\mathbb{B}}d\mu_{g}.\nonumber
\end{eqnarray}

The estimates \eqref{estimatenablau} and \eqref{estimateu2} give us uniform estimates of the solution in the space $H_{2,\text{loc}}^{1}\left(\mathbb{B}^{\circ}\right)$.
In order to obtain the necessary estimates in a neighborhood of the
conical singularities, we identify it with $\left[0,1\right)\times\partial\mathcal{B}$
and we make the computations working directly with \eqref{mellinsobolevinteger} and \eqref{ipn} for $\mathcal{H}_{2}^{1,1+\gamma}\left(\mathbb{B}\right)$.

For $k+\left|\alpha\right|=1$, we have
\begin{eqnarray}
\lefteqn{\int_{\left[0,1\right)}\int_{\partial\mathcal{B}}\left|x^{\frac{n+1}{2}-(1+\gamma)}\left(x\partial_{x}\right)^{k}\partial_{y}^{\alpha}\left(\omega\left(x\right)u\left(t,x,y\right)\right)\right|^{2}\sqrt{\det\left(h\left(x\right)\right)}\frac{dx}{x}dy}\nonumber\\
&&\le C\int_{\left[0,1\right)}\int_{\partial\mathcal{B}}\frac{1}{x^{2}}\left((x\partial_{x}(\omega u))^{2}+\sum_{i,j=1}^{n}h^{ij}\left(x,y\right)\partial_{y_{i}}\left(\omega u\right)\partial_{y_{j}}\left(\omega u\right)\right)x^{n}\sqrt{\det\left(h\left(x\right)\right)}dxdy\nonumber\\
&&\le C\left(\int_{\mathbb{B}}\left\langle \nabla u,\nabla u\right\rangle _{g}d\mu_{g}+\int_{\mathbb{B}}u^{2}d\mu_{g}\right),\nonumber
\end{eqnarray}
for certain $C>0$. This is true as long as
$$
n+1-2(1+\gamma)-1\ge n-2\iff\gamma\le0,
$$
which is true according to our choice of $\gamma$.

On the other hand, if $k+\left|\alpha\right|=0$, we have
\begin{eqnarray}
\lefteqn{\int_{\left[0,1\right)}\int_{\partial\mathcal{B}}\left|x^{\frac{n+1}{2}-(1+\gamma)}\omega\left(x\right)u\left(t,x,y\right)\right|^{2}\sqrt{\det\left(h\left(x\right)\right)}\frac{dx}{x}dy}\nonumber\\
&&\le\int_{0}^{1}\int_{\partial\mathcal{B}}x^{\frac{n}{2}-2-2\gamma}\left|x^{\frac{n}{4}}u\left(t,x,y\right)\right|^{2}\sqrt{\det\left(h\left(x\right)\right)}dxdy\nonumber\\
&&\le\left(\int_{0}^{1}\int_{\partial\mathcal{B}}\left(x^{\frac{n}{2}-2-2\gamma}\right)^{2}\sqrt{\det\left(h\left(x\right)\right)}dydx\right)^{\frac{1}{2}}\left(\int_{0}^{1}\int_{\partial\mathcal{B}}\left|u\left(t,x,y\right)\right|^{4}x^{n}\sqrt{\det\left(h\left(x\right)\right)}dydx\right)^{\frac{1}{2}}\nonumber\\
&&\le \left(\int_{0}^{1}\int_{\partial\mathcal{B}}x^{n-4-4\gamma}\sqrt{\det\left(h\left(x\right)\right)}dydx\right)^{\frac{1}{2}}\left(\int_{\mathbb{B}}u^{4}d\mu_{g}\right)^{\frac{1}{2}}.\nonumber
\end{eqnarray}

Due to \eqref{eq:Estimateintu4}, we conclude that the above expession
is uniformly bounded in time if, and only if,
$$
n-4-4\gamma>-1\iff\gamma<\frac{n-3}{4}.
$$

For $n=1$ this means that $\gamma<-\frac{1}{2}$ and for $n=2$ that
$\gamma<-\frac{1}{4}$. This coincides with the assumptions and the estimate \eqref{gradbound333} follows.
\end{proof}

It is interesting to note that, due to our use of Mellin-Sobolev spaces, it was necessary to use the estimates of the integral of $u^4$. Usually, in the study of the Cahn-Hilliard equation, only the estimates of $u^2$ and $\langle\nabla u,\nabla u\rangle_{g}$ are used. 

\subsubsection{A priori estimate in $\mathcal{D}\big((c-\underline{\Delta}_{0})^{\frac{3+\nu}{2}}\big)$}

For the second step, we fix a positive constant $c>0$ such that, according to Proposition \ref{bipforbi} and Proposition \ref{biuytpfohit}, the operators defined in \eqref{AsBs} below are invertible sectorial operators which belong to $\mathcal{BIP}(\phi)$ for some $\phi<\frac{\pi}{2}$. Namely, we set
\begin{gather}\label{AsBs}
A_{s}=c-\underline{\Delta}_{s}\quad \text{and}\quad B_{s}=\underline{\Delta}_{s}^{2}+c. 
\end{gather}

In order to obtain the a priori estimate, we will use moment inequalities to estimate the non-linear terms in spaces of different fractional powers. This result follows after the technical lemma below.

\begin{lemma}\label{L56} 
Let $p\in(1,\infty)$, $s\geq0$ and $\gamma$ be as in \eqref{gamma}. Then\\
{\em (i)} For all $\alpha\in(0,1)$, we have
$$
\mathcal{H}_{p}^{s+2\alpha,\gamma+2\alpha}(\mathbb{B})\oplus\mathbb{C}_{\omega}\hookrightarrow\mathcal{D}(A_{s}^{\alpha}).
$$
{\em (ii)} If $\alpha\in(0,1)$ is such that 
\begin{gather}\label{gammarestr}
\gamma+2\alpha-1\notin\Big\{ \pm\sqrt{\Big(\frac{n-1}{2}\Big)^{2}-\lambda_{j}}\,|\,j\in\mathbb{N}\Big\},
\end{gather}
then we have the equality 
$$
\mathcal{D}(A_{s}^{\alpha})=\mathcal{H}_{p}^{s+2\alpha,\gamma+2\alpha}(\mathbb{B})\oplus\mathbb{C}_{\omega}.
$$
 Note that the above sum is direct iff $\gamma+2\alpha\ge\frac{n+1}{2}$.
If $\gamma+2\alpha<\frac{n+1}{2}$, then 
$$
\mathcal{H}_{p}^{s+2\alpha,\gamma+2\alpha}(\mathbb{B})\oplus\mathbb{C}_{\omega}=\mathcal{H}_{p}^{s+2\alpha,\gamma+2\alpha}(\mathbb{B}).
$$
\end{lemma}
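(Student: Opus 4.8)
\emph{Plan.} I would deduce both parts from the identification of $\mathcal{D}(A_{s}^{\alpha})$ as a complex interpolation space. By Proposition \ref{bipforbi} the operator $A_{s}=c-\underline{\Delta}_{s}$ is boundedly invertible and lies in $\mathcal{BIP}(\phi)$, so (as in the proof of Lemma \ref{interfind}.(ii), cf. \cite[(I.2.9.8)]{Am}) for every $\alpha\in(0,1)$
\begin{equation*}
\mathcal{D}(A_{s}^{\alpha})=[X_{0}^{s},X_{1}^{s}]_{\alpha},\qquad X_{0}^{s}=\mathcal{H}_{p}^{s,\gamma}(\mathbb{B}),\quad X_{1}^{s}=\mathcal{D}(\underline{\Delta}_{s})=\mathcal{H}_{p}^{s+2,\gamma+2}(\mathbb{B})\oplus\mathbb{C}_{\omega}.
\end{equation*}
Part 1 is then immediate; the content of the lemma is the reverse inclusion in Part 2, for which I would have to analyse the behaviour of the elements of $\mathcal{D}(A_{s}^{\alpha})$ near the conical tips, and this is where the non-resonance assumption \eqref{gammarestr} enters.

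\emph{Part 1.} First, by monotonicity of complex interpolation and Lemma \ref{prop:interpolation}.3,
$$\mathcal{H}_{p}^{s+2\alpha,\gamma+2\alpha}(\mathbb{B})=[\mathcal{H}_{p}^{s,\gamma}(\mathbb{B}),\mathcal{H}_{p}^{s+2,\gamma+2}(\mathbb{B})]_{\alpha}\hookrightarrow[X_{0}^{s},X_{1}^{s}]_{\alpha}=\mathcal{D}(A_{s}^{\alpha}),$$
while $\mathbb{C}_{\omega}\hookrightarrow X_{1}^{s}=\mathcal{D}(A_{s})\hookrightarrow\mathcal{D}(A_{s}^{\alpha})$; adding the two continuous embeddings yields $\mathcal{H}_{p}^{s+2\alpha,\gamma+2\alpha}(\mathbb{B})\oplus\mathbb{C}_{\omega}\hookrightarrow\mathcal{D}(A_{s}^{\alpha})$. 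The assertions about the sum I would read off from \eqref{mellinsobolevinteger}, which shows that a nonzero element of $\mathbb{C}_{\omega}$ belongs to $\mathcal{H}_{p}^{s+2\alpha,\gamma+2\alpha}(\mathbb{B})$ if and only if $\gamma+2\alpha<\frac{n+1}{2}$.

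\emph{Part 2.} Here I would mimic the argument used for Lemma \ref{interfind}.(i) (following \cite[Theorem 3.3]{Ro2}). Fix $v\in\mathcal{D}(A_{s}^{\alpha})=[X_{0}^{s},X_{1}^{s}]_{\alpha}$. Since $\Delta$ maps $X_{0}^{s}$ into $\mathcal{H}_{p}^{s-2,\gamma-2}(\mathbb{B})$ and $X_{1}^{s}$ into $X_{0}^{s}$, interpolation together with Lemma \ref{prop:interpolation}.3 gives $\Delta v\in[\mathcal{H}_{p}^{s-2,\gamma-2}(\mathbb{B}),\mathcal{H}_{p}^{s,\gamma}(\mathbb{B})]_{\alpha}=\mathcal{H}_{p}^{s+2\alpha-2,\gamma+2\alpha-2}(\mathbb{B})$; since also $v\in\mathcal{H}_{p}^{s,\gamma}(\mathbb{B})\hookrightarrow\mathcal{H}_{p}^{s+2\alpha-2,\gamma+2\alpha-2}(\mathbb{B})$, the function $v$ lies in the maximal domain of $\Delta$ in $\mathcal{H}_{p}^{s+2\alpha-2,\gamma+2\alpha-2}(\mathbb{B})$. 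Now \eqref{gammarestr} says exactly that $\frac{n+1}{2}-\gamma-2\alpha\notin Q_{\Delta}$, i.e. there is no conormal-symbol pole on the weight line $\{\mathrm{Re}(z)=\frac{n-3}{2}-(\gamma+2\alpha-2)\}$ (in particular $\gamma+2\alpha\neq\frac{n+1}{2}$), so by the domain description of Section \ref{realizations} this maximal domain is $\mathcal{H}_{p}^{s+2\alpha,\gamma+2\alpha}(\mathbb{B})\oplus\mathcal{E}_{\Delta,\gamma+2\alpha-2}$, with $\mathcal{E}_{\Delta,\gamma+2\alpha-2}$ spanned by functions $\omega(x)c(y)x^{-\rho}\log^{k}(x)$, $\rho\in Q_{\Delta}$, $\mathrm{Re}(\rho)\in[\frac{n+1}{2}-\gamma-2\alpha,\frac{n+5}{2}-\gamma-2\alpha)$. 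Write $v=w+e$ with $w\in\mathcal{H}_{p}^{s+2\alpha,\gamma+2\alpha}(\mathbb{B})$ and $e\in\mathcal{E}_{\Delta,\gamma+2\alpha-2}$.

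\emph{Identification of $e$; the main obstacle.} The crux is that $v$ is more regular than a generic element of that maximal domain, which pins down $e$. The cone vector field $\omega x\partial_{x}$ maps $X_{0}^{s}$ into $\mathcal{H}_{p}^{s-1,\gamma}(\mathbb{B})$ and $X_{1}^{s}$ into $\mathcal{H}_{p}^{s+1,\gamma+2}(\mathbb{B})$ — on $\mathbb{C}_{\omega}$ it produces a function supported away from the tips — so by interpolation $\omega x\partial_{x}v\in\mathcal{H}_{p}^{s+2\alpha-1,\gamma+2\alpha}(\mathbb{B})$. Using $x\partial_{x}(x^{-\rho}\log^{k}x)=-\rho\,x^{-\rho}\log^{k}x+k\,x^{-\rho}\log^{k-1}x$ and the fact that $\omega(x)c(y)x^{-\rho}\log^{j}x$ with $c\not\equiv0$ does not belong to $\mathcal{H}_{p}^{\infty,\frac{n+1}{2}-\mathrm{Re}(\rho)}(\mathbb{B})$, the membership of $\omega x\partial_{x}v$ in a space of weight $\gamma+2\alpha$ forces every $\rho$ occurring in $e$ to satisfy $\mathrm{Re}(\rho)<\frac{n+1}{2}-\gamma-2\alpha$ and, for $\rho=0$, forbids logarithms; combined with $\mathrm{Re}(\rho)\geq\frac{n+1}{2}-\gamma-2\alpha$ from the maximal-domain description, the only surviving term is $\rho=0$, $k=0$, whose coefficient is locally constant, and it occurs precisely when $\gamma+2\alpha>\frac{n+1}{2}$. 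Hence $e\in\mathbb{C}_{\omega}$, and $e=0$ if $\gamma+2\alpha<\frac{n+1}{2}$; together with Part 1 this gives the claimed equality. The hard part is exactly this bookkeeping of asymptotic terms: because $\mathcal{H}_{p}^{s+2,\gamma+2}(\mathbb{B})$ is dense in $\mathcal{H}_{p}^{s,\gamma}(\mathbb{B})$, there is no bounded projection on the couple $(X_{0}^{s},X_{1}^{s})$ splitting off $\mathbb{C}_{\omega}$, so the reverse inclusion cannot be reduced to a formal interpolation identity and one genuinely has to control the tip asymptotics through the conormal symbol of $\Delta$, with \eqref{gammarestr} keeping its poles off the critical weight line.
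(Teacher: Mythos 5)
Your argument is correct and follows essentially the same route as the paper: identify $\mathcal{D}(A_s^\alpha)=[X_0^s,X_1^s]_\alpha$ via BIP, embed $\mathcal{H}_p^{s+2\alpha,\gamma+2\alpha}(\mathbb{B})\oplus\mathbb{C}_\omega$ by Lemma \ref{prop:interpolation}.3, and for the converse place $[X_0^s,X_1^s]_\alpha$ in the maximal domain of $\Delta$ over $\mathcal{H}_p^{s+2(\alpha-1),\gamma+2(\alpha-1)}(\mathbb{B})$, using \eqref{gammarestr} to identify the minimal domain and $\omega x\partial_x$ to eliminate the asymptotic terms with $\rho\neq 0$ and the logarithms at $\rho=0$. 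The one sub-step you assert rather than derive --- that the surviving $\rho=0$, $k=0$ coefficient $c(y)$ is locally constant --- is settled in the paper by additionally applying the tangential derivatives $\omega\partial_{y_j}$ and noting that $\omega\partial_{y_j}v$ must again land in weight $\gamma+2\alpha$.
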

\begin{proof}
(i) We proceed as in the proof of \cite[Theorem 3.3]{Ro2}.
By the boundedness of the imaginary powers given by Proposition \ref{bipforbi}, we have, for $\alpha\in(0,1)$,
$$
[X_{0}^{s},X_{1}^{s}]_{\alpha}=\mathcal{D}(A_{s}^{\alpha}),
$$
 up to norm equivalence, see e.g. \cite[(I.2.9.8)]{Am}. By Lemma \ref{prop:interpolation} (iii)
we have that 
\begin{gather}
\mathcal{H}_{p}^{s+2\alpha,\gamma+2\alpha}(\mathbb{B})=[X_{0}^{s},\mathcal{H}_{p}^{s+2,\gamma+2}(\mathbb{B})]_{\alpha}\hookrightarrow[X_{0}^{s},X_{1}^{s}]_{\alpha}.\label{dmm33}
\end{gather}
Moreover, as $\mathbb{C}_{\omega}\hookrightarrow X_{j}^{s}$, for $j\in\{0,1\}$, we have
\begin{equation}\label{dmm34}
\mathbb{C}_{\omega}\hookrightarrow[X_{0}^{s},X_{1}^{s}]_{\alpha}.
\end{equation}
The embeddings \eqref{dmm33} and \eqref{dmm34} imply that
\begin{gather}
\mathcal{H}_{p}^{s+2\alpha,\gamma+2\alpha}(\mathbb{B})\oplus\mathbb{C}_{\omega}\hookrightarrow[X_{0}^{s},X_{1}^{s}]_{\alpha}=\mathcal{D}(A_{s}^{\alpha}).\label{dmm3322}
\end{gather}
(ii) First, we note that 
$$
[X_{0}^{s},X_{1}^{s}]_{\alpha}\subset X_{0}^{s}=\mathcal{H}_{p}^{s,\gamma}(\mathbb{B})\subset\mathcal{H}_{p}^{s+2(\alpha-1),\gamma+2(\alpha-1)}(\mathbb{B}). 
$$
Moreover, due to standard properties of the interpolation, see \cite[Theorem B.2.3]{Haa2}, $\Delta$ maps $[X_{0}^{s},X_{1}^{s}]_{\alpha}$ to 
$$
[\mathcal{H}_{p}^{s-2,\gamma-2}(\mathbb{B}),\mathcal{H}_{p}^{s,\gamma}(\mathbb{B})]_{\alpha}=\mathcal{H}_{p}^{s+2(\alpha-1),\gamma+2(\alpha-1)}(\mathbb{B}),
$$
where we have used again Lemma \ref{prop:interpolation} (iii).
Therefore, $[X_{0}^{s},X_{1}^{s}]_{\alpha}$ belongs to
the maximal domain of $\Delta$ in $\mathcal{H}_{p}^{s+2(\alpha-1),\gamma+2(\alpha-1)}(\mathbb{B})$, i.e. 
\begin{gather}
[X_{0}^{s},X_{1}^{s}]_{\alpha}\hookrightarrow\mathcal{D}(\underline{\Delta}_{s+2(\alpha-1),p,\gamma+2(\alpha-1),\min})\oplus\mathcal{E}_{\Delta,\gamma+2(\alpha-1)}=\mathcal{H}_{p}^{s+2\alpha,\gamma+2\alpha}(\mathbb{B})\oplus\mathcal{E}_{\Delta,\gamma+2(\alpha-1)}.\label{dm22}
\end{gather}
Here we have used \eqref{gammarestr} so that, according to \eqref{Qset} and
the discussion that follows it, 
$$
\mathcal{D}(\underline{\Delta}_{s+2(\alpha-1),p,\gamma+2(\alpha-1),\min})=\mathcal{H}_{p}^{s+2\alpha,\gamma+2\alpha}(\mathbb{B}).
$$
Let $(x,y)=(x,y_{1},...,y_{n})\in(0,1)\times\partial\mathcal{B}$
be local coordinates on the neighbourhood of $\partial\mathcal{B}$. Similarly, the operators
$\omega(x)(x\partial_{x})$ and $\omega(x)\partial_{y_{j}}$,
$j\in\{1,...,n\}$, map $[X_{0}^{s},X_{1}^{s}]_{\alpha}$
to 
\begin{equation}
[\mathcal{H}_{p}^{s-1,\gamma}(\mathbb{B}),\mathcal{H}_{p}^{s+1,\gamma+2}(\mathbb{B})]_{\alpha}=\mathcal{H}_{p}^{s-1+2\alpha,\gamma+2\alpha}(\mathbb{B}).\label{eq:A}
\end{equation}
Moreover, according to \eqref{dm22} we decompose
any $u\in[X_{0}^{s},X_{1}^{s}]_{\alpha}$ as $u=w\oplus v$,
where $v\in\mathcal{E}_{\Delta,\gamma+2(\alpha-1)}$ and
$w\in\mathcal{H}_{p}^{s+2\alpha,\gamma+2\alpha}(\mathbb{B})$.
In the chosen local coordinates we write
\begin{gather}
v=\omega(x)\sum_{i,j}c_{ij}(y)x^{-\rho_{i}}\log^{k_{ij}}(x),\label{sumofv}
\end{gather}
 where the above sum is finite, $c_{ij}\in C^{\infty}(\partial\mathbb{B})$,
$\mathrm{Re}(\rho_{i})\in[\frac{n-3}{2}-\gamma-2(\alpha-1),\frac{n+1}{2}-\gamma-2(\alpha-1))$
and $k_{ij}\in\{ 0,1\} $. 

If $\mathcal{E}_{\Delta,\gamma+2(\alpha-1)}=\emptyset$,
then, by \eqref{dmm3322} and \eqref{dm22}
we deduce
$$
\mathcal{H}_{p}^{s+2\alpha,\gamma+2\alpha}(\mathbb{B})\oplus\mathbb{C}_{\omega}\hookrightarrow[X_{0}^{s},X_{1}^{s}]_{\alpha}\hookrightarrow\mathcal{H}_{p}^{s+2\alpha,\gamma+2\alpha}(\mathbb{B}).
$$
Hence $\gamma+2\alpha<\frac{n+1}{2}$ and 
$$
\mathcal{D}(A_{s}^{\alpha})=[X_{0}^{s},X_{1}^{s}]_{\alpha}=\mathcal{H}_{p}^{s+2\alpha,\gamma+2\alpha}(\mathbb{B}).
$$

Assume now that $\mathcal{E}_{\Delta,\gamma+2(\alpha-1)}\ne\emptyset$,
that is, the set of $\{\rho_{i}\}$ in \eqref{sumofv}
is not empty. Suppose that there exists a $\rho_{i}\neq0$. By applying
$\omega(x)(x\partial_{x})$ on $\omega(x)c_{ij}(y)x^{-\rho_{i}}\log^{k_{ij}}(x)$
and observing that 
$$
\omega(x)\log^{k_{ij}}(x)(x\partial_{x})x^{-\rho_{i}}=-\rho_{i}\omega(x)x^{-\rho_{i}}\log^{k_{ij}}(x)\notin\mathcal{H}_{p}^{s-1+2\alpha,\gamma+2\alpha}(\mathbb{B}),
$$
 we get a contradiction. In fact, as we have seen in \eqref{eq:A}, the above expression should belong to $\mathcal{H}_{p}^{s-1+2\alpha,\gamma+2\alpha}(\mathbb{B})$.

Therefore, the unique possible expression for $v$ in terms of \eqref{sumofv} is $v=\omega(x)(c_{1}(y)\log(x)+c_{2}(y))$ with $c_{1},c_{2}\in C^{\infty}(\partial\mathbb{B})$.
By applying now $\omega(x)\partial_{y_{j}}$, $j\in\{1,...,n\}$,
on $v$, the fact that $\omega(x)\partial_{y_{j}}v\in\mathcal{H}_{p}^{s-1+2\alpha,\gamma+2\alpha}(\mathbb{B})$
implies that either $\gamma<\frac{n+1}{2}-2\alpha$ or $c_{1}=0$
and $c_{2}=constant$. In the first case we get that $\mathcal{E}_{\Delta,\gamma+2(\alpha-1)}\hookrightarrow\mathcal{H}_{p}^{s+2\alpha,\gamma+2\alpha}(\mathbb{B})$
and $\mathbb{C}_{\omega}\hookrightarrow\mathcal{H}_{p}^{s+2\alpha,\gamma+2\alpha}(\mathbb{B})$.
The second case implies $\mathcal{E}_{\Delta,\gamma+2(\alpha-1)}=\mathbb{C}_{\omega}$.
We can summarize the above into
$$
\mathcal{H}_{p}^{s+2\alpha,\gamma+2\alpha}(\mathbb{B})\oplus\mathcal{E}_{\Delta,\gamma+2(\alpha-1)}=\mathcal{H}_{p}^{s+2\alpha,\gamma+2\alpha}(\mathbb{B})\oplus\mathbb{C}_{\omega},
$$
 and the result follows by \eqref{dmm3322} and \eqref{dm22}. 
\end{proof}

\begin{lemma}
\label{firstest} Let
$p\in(1,\infty)$, $s\geq0$, $s+\frac{5}{3}>\frac{n+1}{p}$,
$\gamma$ be as in \eqref{gamma} and $\nu\in(0,1)$
be such that 
$$
\gamma+\frac{5+\nu}{3}-1\notin\Big\{ \pm\sqrt{\Big(\frac{n-1}{2}\Big)^{2}-\lambda_{j}}\,|\, j\in\mathbb{N}\Big\} \quad \text{and} \quad \gamma+\frac{5+\nu}{3}>\frac{n+1}{2}.
$$ 
Then, for each $u\in\mathcal{D}\big(A_{s}^{\frac{3+\nu}{2}}\big)$ we have that 
$$
\|u^{3}-u\|_{\mathcal{D}\big(A_{s}^{\frac{5+\nu}{6}}\big)}\leq C_{0}(1+\|u\|_{\mathcal{D}\big(A_{s}^{\frac{1}{2}}\big)}^{2})\|u\|_{\mathcal{D}\big(A_{s}^{\frac{3+\nu}{2}}\big)},
$$
 with certain $C_{0}>0$ depending only on $A_{s}$ and $\nu$.
\end{lemma}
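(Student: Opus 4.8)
The plan is to identify $\mathcal{D}(A_{s}^{(5+\nu)/6})$ with a Mellin--Sobolev space that is a Banach algebra, estimate the cubic term $u^{3}$ there by the cube of its norm, and then use the moment inequality for the fractional powers of $A_{s}$ to redistribute the powers so that exactly \emph{one} copy of the top-order norm $\|u\|_{\mathcal{D}(A_{s}^{(3+\nu)/2})}$ survives.

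First I would fix the relevant domains. Set $\alpha=\frac{5+\nu}{6}$; since $\nu\in(0,1)$ we have $\alpha\in(\frac{5}{6},1)$, and $2\alpha=\frac{5+\nu}{3}$, so the hypothesis $\gamma+\frac{5+\nu}{3}-1\notin\{\pm\sqrt{(\frac{n-1}{2})^{2}-\lambda_{j}}\,|\,j\in\mathbb{N}\}$ is exactly condition \eqref{gammarestr} for this $\alpha$. Hence Lemma \ref{L56}.2 gives, up to equivalent norms,
\[
\mathcal{D}(A_{s}^{\frac{5+\nu}{6}})=\mathcal{H}_{p}^{s+\frac{5+\nu}{3},\,\gamma+\frac{5+\nu}{3}}(\mathbb{B})\oplus\mathbb{C}_{\omega},
\]
where the sum is direct because $\gamma+\frac{5+\nu}{3}>\frac{n+1}{2}$; moreover $s+\frac{5+\nu}{3}>s+\frac{5}{3}>\frac{n+1}{p}$, so by Remark \ref{Bancalg} (that is, \cite[Lemma 3.2]{RS3}) this space is a Banach algebra. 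I would also recall that $A_{s}=c-\underline{\Delta}_{s}$ is invertible sectorial with bounded imaginary powers by Proposition \ref{bipforbi}, so $\|A_{s}^{\beta}\cdot\|_{X_{0}^{s}}$ is an equivalent norm on $\mathcal{D}(A_{s}^{\beta})$ for every $\beta\geq0$ and $\mathcal{D}(A_{s}^{\beta_{1}})\hookrightarrow\mathcal{D}(A_{s}^{\beta_{0}})$ whenever $\beta_{1}\geq\beta_{0}$.

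Next I would carry out the estimate. By the Banach algebra property of $\mathcal{D}(A_{s}^{(5+\nu)/6})$ we have $\|u^{3}\|_{\mathcal{D}(A_{s}^{(5+\nu)/6})}\leq C\|u\|_{\mathcal{D}(A_{s}^{(5+\nu)/6})}^{3}$. Now observe the identity
\[
\tfrac{5+\nu}{6}=\tfrac{2}{3}\cdot\tfrac{1}{2}+\tfrac{1}{3}\cdot\tfrac{3+\nu}{2},\qquad \tfrac{1}{2}<\tfrac{5+\nu}{6}<\tfrac{3+\nu}{2},
\]
so the moment inequality for the fractional powers of the sectorial operator $A_{s}$ (see, e.g., \cite[Theorem III.4.6.5]{Am}) gives
\[
\|A_{s}^{\frac{5+\nu}{6}}u\|_{X_{0}^{s}}\leq C\,\|A_{s}^{\frac{1}{2}}u\|_{X_{0}^{s}}^{\frac{2}{3}}\,\|A_{s}^{\frac{3+\nu}{2}}u\|_{X_{0}^{s}}^{\frac{1}{3}},\qquad u\in\mathcal{D}(A_{s}^{\frac{3+\nu}{2}}).
\]
Cubing and using the norm equivalences above yields $\|u^{3}\|_{\mathcal{D}(A_{s}^{(5+\nu)/6})}\leq C\|u\|_{\mathcal{D}(A_{s}^{1/2})}^{2}\|u\|_{\mathcal{D}(A_{s}^{(3+\nu)/2})}$. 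For the linear term, the embedding $\mathcal{D}(A_{s}^{(3+\nu)/2})\hookrightarrow\mathcal{D}(A_{s}^{(5+\nu)/6})$ gives $\|u\|_{\mathcal{D}(A_{s}^{(5+\nu)/6})}\leq C\|u\|_{\mathcal{D}(A_{s}^{(3+\nu)/2})}$. Adding the two contributions,
\[
\|u^{3}-u\|_{\mathcal{D}(A_{s}^{(5+\nu)/6})}\leq C\big(1+\|u\|_{\mathcal{D}(A_{s}^{1/2})}^{2}\big)\|u\|_{\mathcal{D}(A_{s}^{(3+\nu)/2})},
\]
which is the asserted bound.

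I do not expect a genuine obstacle, but the delicate points are: (i) matching the hypotheses on $\gamma$ and $\nu$ to exactly what Lemma \ref{L56}.2 and the Banach algebra statement of the target space require; and (ii) the fact that the exponent $\frac{3+\nu}{2}$ exceeds $1$, so one cannot describe $\mathcal{D}(A_{s}^{(3+\nu)/2})$ via Lemma \ref{L56} and must work purely with the abstract interpolation (moment) inequality between $A_{s}^{1/2}$ and $A_{s}^{(3+\nu)/2}$. The precise choice of the three exponents $\frac{1}{2},\frac{5+\nu}{6},\frac{3+\nu}{2}$ is exactly what makes the cubic algebra estimate collapse to a bound that is \emph{linear} in the top-order norm, which is precisely the shape needed for the singular Gr\"onwall argument that follows.
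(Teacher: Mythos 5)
Your proposal is correct and follows essentially the same route as the paper: identify $\mathcal{D}(A_{s}^{\frac{5+\nu}{6}})$ with $\mathcal{H}_{p}^{s+\frac{5+\nu}{3},\gamma+\frac{5+\nu}{3}}(\mathbb{B})\oplus\mathbb{C}_{\omega}$ via Lemma \ref{L56}.2, use the Banach algebra property from Remark \ref{Bancalg} to bound $\|u^{3}\|$ by the cube of that norm, and apply the moment inequality with the decomposition $\frac{5+\nu}{6}=\frac{2}{3}\cdot\frac{1}{2}+\frac{1}{3}\cdot\frac{3+\nu}{2}$ so that only one top-order factor survives. The verification that the hypotheses on $\gamma$ and $\nu$ match exactly what Lemma \ref{L56}.2 and the Banach algebra remark require is precisely the point the paper's proof relies on as well.
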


Note that such a choice of $\nu$ is always possible, as $\{ \pm\sqrt{(\frac{n-1}{2})^{2}-\lambda_{j}}\, |\, j\in\mathbb{N}\} $
is a discrete set and $\gamma+2>\frac{n+1}{2}$, due to \eqref{gamma}.
\begin{proof}
For each $r>0$, denote $\Vert \cdot\Vert _{\mathcal{D}(A_{s}^{r})}$ by $\Vert\cdot\Vert_{r}$. By the moment inequality, see \cite[(V.1.2.12)]{Am},
we have 
\begin{equation}
\Vert u\Vert _{\frac{5+\nu}{6}}\leq C_{1}\Vert u\Vert _{\frac{1}{2}}^{\frac{2}{3}}\Vert u\Vert _{\frac{3+\nu}{2}}^{\frac{1}{3}},\label{eq:Auuu}
\end{equation}
 for each $u\in\mathcal{D}\big(A_{s}^{\frac{3+\nu}{2}}\big)$ and
certain $C_{1}>0$ depending only on $A_{s}$ and $\nu$. By our assumptions
on $\nu$ and Lemma \ref{L56} (ii), we know that
$\mathcal{D}\big(A_{s}^{\frac{5+\nu}{6}}\big)=\mathcal{H}_{p}^{s+\frac{5+\nu}{3},\gamma+\frac{5+\nu}{3}}(\mathbb{B})\oplus\mathbb{C}_{\omega}$.
Hence $\mathcal{D}\big(A_{s}^{\frac{5+\nu}{6}}\big)$ is a Banach algebra up to an equivalent norm, due to Remark
\ref{Bancalg}. Therefore 
\begin{equation}
\Vert u^{3}\Vert _{\frac{5+\nu}{6}}\leq C_{2}\Vert u\Vert _{\frac{5+\nu}{6}}^{3}\label{eq:B}
\end{equation}
 for certain $C_{2}>0$ depending only on $A_{s}$ and $\nu$. Equations
\eqref{eq:Auuu} and \eqref{eq:B} imply that
$$
\Vert u^{3}\Vert _{\frac{5+\nu}{6}}\leq C_{1}^{3}C_{2}\Vert u\Vert _{\frac{1}{2}}^{2}\Vert u\Vert _{\frac{3+\nu}{2}}.
$$
Therefore
$$
\Vert u^{3}-u\Vert _{\frac{5+\nu}{6}} \leq \Vert u^{3}\Vert _{\frac{5+\nu}{6}}+\Vert u\Vert _{\frac{5+\nu}{6}}\leq C_{3}(\Vert u^{3}\Vert _{\frac{5+\nu}{6}}+\Vert u\Vert _{\frac{3+\nu}{2}}) \leq C_{0}(1+\Vert u\Vert _{\frac{1}{2}}^{2})\Vert u\Vert _{\frac{3+\nu}{2}}
$$
for suitable constants $C_{3},C_{0}>0$ depending only on $A_{s}$ and $\nu$.
\end{proof}

We are almost in position to obtain our estimate. We show first the following elementary result.

\begin{lemma}
\label{bipcompare} Let $p\in(1,\infty)$, $s\geq0$,
$\gamma$ be chosen as in \eqref{gamma} and $\underline{\Delta}_{s}$
be the Laplacian \eqref{ddelta}. Then, for any $\sigma\in[0,1]$
we have that 
\begin{gather}
\mathcal{D}(A_{s}^{2\sigma})=\mathcal{D}((A_{s}^{2})^{\sigma})=\mathcal{D}(B_{s}^{\sigma})\label{sfirsteqdom}
\end{gather}
 up to norm equivalence. Moreover, 
\begin{gather}
B_{s}^{-\frac{\sigma}{2}}A_{s}^{\sigma}=A_{s}^{\sigma}B_{s}^{-\frac{\sigma}{2}}\quad\text{in}\quad\mathcal{D}(A_{s}^{\sigma}).\label{seceqdom}
\end{gather}
\end{lemma}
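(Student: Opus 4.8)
The plan is to obtain the domain identities \eqref{sfirsteqdom} from bounded imaginary powers together with the interpolation description of fractional power domains, and then to derive the intertwining \eqref{seceqdom} from the observation that $B_s$ is a polynomial in $A_s$. For the first part I would begin by noting that
\[
A_s^{2}=(c-\underline{\Delta}_s)^{2}=\underline{\Delta}_s^{2}-2c\,\underline{\Delta}_s+c^{2}
\qquad\text{and}\qquad
B_s=\underline{\Delta}_s^{2}+c
\]
differ from $\underline{\Delta}_s^{2}$ only by lower order terms which are bounded on $\mathcal{D}(\underline{\Delta}_s)\supset\mathcal{D}(\underline{\Delta}_s^{2})$, so that, by \eqref{bilapldmn}, $\mathcal{D}(A_s^{2})=\mathcal{D}(B_s)=\mathcal{D}(\underline{\Delta}_s^{2})=X_2^{s}$. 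By Proposition \ref{bipforbi}, $A_s$ and $A_s^{2}=(c-\underline{\Delta}_s)^{2}$ belong to $\mathcal{BIP}(\phi)$ for every $\phi>0$, and by the choice of $c$ in \eqref{AsBs} so does $B_s$ with $\phi<\frac{\pi}{2}$; moreover all three are invertible. Then \cite[(I.2.9.8)]{Am} gives, for $\sigma\in(0,1)$,
\[
\mathcal{D}((A_s^{2})^{\sigma})=[X_0^{s},\mathcal{D}(A_s^{2})]_{\sigma}=[X_0^{s},X_2^{s}]_{\sigma}=[X_0^{s},\mathcal{D}(B_s)]_{\sigma}=\mathcal{D}(B_s^{\sigma}),
\]
while the composition rule for complex powers of sectorial operators, see \cite[Theorem III.4.6.5]{Am}, yields $(A_s^{2})^{\sigma}=A_s^{2\sigma}$, whence $\mathcal{D}(A_s^{2\sigma})=\mathcal{D}((A_s^{2})^{\sigma})$. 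The cases $\sigma\in\{0,1\}$ are immediate, so \eqref{sfirsteqdom} follows.

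For \eqref{seceqdom}, I would fix $\sigma\in(0,1]$ (the case $\sigma=0$ being trivial). The key point is that $B_s=A_s^{2}-2cA_s+(c^{2}+c)$ is a polynomial in $A_s$; hence, by the algebraic rules for complex powers in \cite[Theorem III.4.6.5]{Am}, the bounded operator $A_s^{-\sigma}$ leaves $\mathcal{D}(A_s^{2})$ invariant and commutes with $A_s$ and $A_s^{2}$ on their domains, and therefore with $B_s$ on $\mathcal{D}(B_s)=\mathcal{D}(A_s^{2})$. Consequently $A_s^{-\sigma}$ commutes with every resolvent $(B_s+\mu)^{-1}$, $-\mu\in\rho(-B_s)$, and, writing $B_s^{-\sigma/2}$ as the Dunford integral \eqref{dunfordformula} of these resolvents, $A_s^{-\sigma}B_s^{-\sigma/2}=B_s^{-\sigma/2}A_s^{-\sigma}$. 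Now, given $x\in\mathcal{D}(A_s^{\sigma})$, set $y=A_s^{\sigma}x\in X_0^{s}$, so that $x=A_s^{-\sigma}y$; since $A_s^{\sigma}A_s^{-\sigma}=\mathrm{id}_{X_0^{s}}$ and $B_s^{-\sigma/2}x=A_s^{-\sigma}B_s^{-\sigma/2}y\in\operatorname{ran}(A_s^{-\sigma})=\mathcal{D}(A_s^{\sigma})$ (so that the left-hand side below is meaningful),
\[
A_s^{\sigma}B_s^{-\sigma/2}x=A_s^{\sigma}A_s^{-\sigma}B_s^{-\sigma/2}y=B_s^{-\sigma/2}y=B_s^{-\sigma/2}A_s^{\sigma}x,
\]
which is exactly \eqref{seceqdom}.

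The main obstacle is the commutation step: although it is morally obvious because $B_s$ is a function of $A_s$, it involves unbounded operators, so one must verify carefully that $A_s^{-\sigma}$ maps $\mathcal{D}(A_s^{2})$ into itself and intertwines $A_s^{2}$, and that this commutation survives passage to the resolvent of $B_s$ and to the Dunford integral defining $B_s^{-\sigma/2}$. If one prefers to avoid the ``polynomial in $A_s$'' bookkeeping, an alternative is to use the factorisations $(A_s+\lambda)^{-1}=-(\underline{\Delta}_s-(c+\lambda))^{-1}$ and $(B_s+\mu)^{-1}=(\underline{\Delta}_s+i\nu)^{-1}(\underline{\Delta}_s-i\nu)^{-1}$ with $\nu^{2}=c+\mu$ (legitimate since $\sigma(\underline{\Delta}_s)\subset(-\infty,c)$), together with the fact that resolvents of the single operator $\underline{\Delta}_s$ commute with one another; this requires only the elementary check that the Dunford contour for $B_s^{-\sigma/2}$ can be chosen to avoid $(-\infty,-c]$.
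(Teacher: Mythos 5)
Your proposal is correct and follows essentially the same route as the paper: the domain identities come from BIP together with the complex-interpolation description of fractional power domains and the law of exponents, and the intertwining \eqref{seceqdom} is reduced to the commutation of $A_s^{-\sigma}$ with $B_s^{-\sigma/2}$ followed by the same three-line computation with $x=A_s^{-\sigma}y$. The only cosmetic difference is that the paper verifies the commutation via the factorisation $B_s^{-1}=(\underline{\Delta}_s+i\sqrt{c})^{-1}(\underline{\Delta}_s-i\sqrt{c})^{-1}$ (your stated alternative) rather than via the ``polynomial in $A_s$'' bookkeeping, but both arguments are sound.
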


\begin{proof}
Concerning \eqref{sfirsteqdom}, it is sufficient to consider the case $\sigma\in(0,1)$. The first equality in \eqref{sfirsteqdom} follows from Proposition \ref{bipforbi} and the second law of exponents, see \cite[Corollary 3.1.5]{Haa2} or \cite[Lemma 3.6]{RS1}. For the second one, by Proposition \ref{bipforbi}, Proposition \ref{biuytpfohit} and \cite[(I.2.9.8)]{Am} we have that 
\begin{eqnarray*}
\mathcal{D}((A_{s}^{2})^{\sigma})=[\mathcal{D}((A_{s}^{2})^{0}),\mathcal{D}((A_{s}^{2})^{1})]_{\sigma} = [\mathcal{H}_{p}^{s,\gamma}(\mathbb{B}),\mathcal{D}(\underline{\Delta}_{s}^{2})]_{\sigma}=[\mathcal{D}(B_{s}^{0}),\mathcal{D}(B_{s}^{1})]_{\sigma}=\mathcal{D}(B_{s}^{\sigma})
\end{eqnarray*}
 up to equivalent norms. 

Concerning \eqref{seceqdom}, let us first observe that the bounded operators $A_{s}^{-\sigma}$ and $B_{s}^{-\sigma/2}$ commute. In fact, by the Dunford integral formula, it suffices to
show that $A_{s}$ and $B_{s}$ are resolvent commuting (see \cite[(III.4.9.1)]{Am} for definition). For $c>0$ as in \eqref{AsBs}, we have that
$$
B_{s}^{-1}=(\underline{\Delta}_{s}^{2}+c)^{-1}=(\underline{\Delta}_{s}+i\sqrt{c})^{-1}(\underline{\Delta}_{s}-i\sqrt{c})^{-1},
$$
so that $B_{s}^{-1}$ and $A_{s}^{-1}=(c-\underline{\Delta}_{s})^{-1}$ commute. Therefore $A_{s}$ and $B_{s}$ are resolvent commuting and $A_{s}^{-\sigma}$ and $B_{s}^{-\sigma/2}$ commute by \cite[Lemma III.4.9.1 (ii)]{Am}.

The above remarks lead us to the conclusion that 
\begin{gather}\label{eq:C}
A_{s}^{-\sigma}B_{s}^{-\frac{\sigma}{2}}A_{s}^{\sigma}v=B_{s}^{-\frac{\sigma}{2}}v,\quad v\in\mathcal{D}(A_{s}^{\sigma}).
\end{gather}
In particular $B_{s}^{-\sigma/2}v\in\mathcal{D}(A_{s}^{\sigma})$ if $v\in\mathcal{D}(A_{s}^{\sigma})$. Applying $A_{s}^{\sigma}$ to both sides of \eqref{eq:C}, we get \eqref{seceqdom}.
\end{proof}

\begin{proposition}
\label{uuk} Let $\dim(\mathbb{B})\in\{ 2,3\}$, $p=2$, $s=0$, $\gamma<-\frac{1}{2}$ if $\dim(\mathbb{B})=2$, $\gamma<-\frac{1}{4}$ if $\dim(\mathbb{B})=3$ and let $\gamma$ satisfy the conditions of \eqref{gamma}, $\nu$ be as in Lemma \ref{firstest} and $q>\frac{4}{1-\nu}$. Moreover, let $u_{0}$ be as in \eqref{u0reg} and let $u$ be the unique solution of \eqref{CH1}-\eqref{CH2} on $[0,T]\times\mathbb{B}$, for certain $T>0$. Then, we have that $u\in C\big([0,T];\mathcal{D}\big(A_{0}^{\frac{3+\nu}{2}}\big)\big)$ and
$$
\|u(t)\|_{\mathcal{D}\big(A_{0}^{\frac{3+\nu}{2}}\big)}\leq\widetilde{C}\|u_{0}\|_{\mathcal{D}\big(A_{0}^{\frac{3+\nu}{2}}\big)},\quad t\in[0,T],
$$
 for some constant $\widetilde{C}>0$ depending only on $c$, defined
in \eqref{AsBs}, $T$, $\underline{\Delta}_{0}$, $\Vert \sqrt{\langle\nabla u_{0},\nabla u_{0}\rangle_{g}}\Vert _{\mathcal{H}_{2}^{0,0}(\mathbb{B})}$
and $\Vert u_{0}^{2}-1\Vert _{\mathcal{H}_{2}^{0,0}(\mathbb{B})}$. 
\end{proposition}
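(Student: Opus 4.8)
The plan is to convert \eqref{CH1}--\eqref{CH2} into a Duhamel integral equation for the analytic semigroup generated by $-B_{0}$, estimate the nonlinearity by Lemma \ref{firstest} fed with the a priori bound of Proposition \ref{gradest}, pass between the $A_{0}$- and $B_{0}$-scales via Lemma \ref{bipcompare}, and close the estimate with a singular Gr\"onwall inequality. First I collect the consequences of the hypotheses. Since $\gamma\le0$, Lemma \ref{propms}.4 gives $\mathcal{H}_{2}^{1,1}(\mathbb{B})\hookrightarrow\mathcal{H}_{2}^{1,\gamma+1}(\mathbb{B})$, so Proposition \ref{gradest} together with Lemma \ref{L56}.1 (case $\alpha=\frac12$) produces a constant $C>0$, depending only on $\|\sqrt{\langle\nabla u_{0},\nabla u_{0}\rangle_{g}}\|_{\mathcal{H}_{2}^{0,0}(\mathbb{B})}$ and $\|u_{0}^{2}-1\|_{\mathcal{H}_{2}^{0,0}(\mathbb{B})}$, with $\|u(t)\|_{\mathcal{D}(A_{0}^{1/2})}\le C$ for all $t\in[0,T]$. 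Since $\mathcal{D}(\underline{\Delta}_{0}^{2})=\mathcal{D}(B_{0})$ and $q>\frac{4}{1-\nu}$ forces $1-\frac1q>\frac{3+\nu}{4}$, the standard embedding $(\mathcal{D}(B_{0}),\mathcal{H}_{2}^{0,\gamma}(\mathbb{B}))_{\frac1q,q}\hookrightarrow\mathcal{D}(B_{0}^{\beta})$ valid for $\beta<1-\frac1q$ gives $(\mathcal{D}(\underline{\Delta}_{0}^{2}),\mathcal{H}_{2}^{0,\gamma}(\mathbb{B}))_{\frac1q,q}\hookrightarrow\mathcal{D}(B_{0}^{\frac{3+\nu}{4}})=\mathcal{D}(A_{0}^{\frac{3+\nu}{2}})$, the last identity by Lemma \ref{bipcompare}. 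With \eqref{u0reg}, \eqref{uexists} and Remark \ref{embedtoC} this already yields $u_{0}\in\mathcal{D}(A_{0}^{\frac{3+\nu}{2}})$ and $u\in C([0,T];\mathcal{D}(A_{0}^{\frac{3+\nu}{2}}))$; in particular $\psi(t):=\|B_{0}^{\frac{3+\nu}{4}}u(t)\|_{\mathcal{H}_{2}^{0,\gamma}(\mathbb{B})}$ — an equivalent norm on $\mathcal{D}(A_{0}^{\frac{3+\nu}{2}})$, since $B_{0}$ is invertible — is continuous, hence bounded, on $[0,T]$, which will be the starting point of the bootstrap.

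Next, adding $cu$ to both sides of \eqref{CH1} I rewrite the equation as $u'+B_{0}u=\underline{\Delta}_{0}(u^{3}-u)+cu$; since $u\in W^{1,q}(0,T;\mathcal{H}_{2}^{0,\gamma}(\mathbb{B}))\cap L^{q}(0,T;\mathcal{D}(\underline{\Delta}_{0}^{2}))$ and $-B_{0}$ generates an analytic semigroup, the mild-solution identity
\[
u(t)=e^{-tB_{0}}u_{0}+\underline{\Delta}_{0}\!\int_{0}^{t}e^{-(t-r)B_{0}}(u^{3}(r)-u(r))\,dr+c\!\int_{0}^{t}e^{-(t-r)B_{0}}u(r)\,dr
\]
holds, using that $\underline{\Delta}_{0}$ commutes with $e^{-sB_{0}}$ (a consequence of $B_{0}=\underline{\Delta}_{0}^{2}+c$). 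I apply $B_{0}^{\frac{3+\nu}{4}}$ and estimate in $\mathcal{H}_{2}^{0,\gamma}(\mathbb{B})$. Because $B_{0}\in\mathcal{BIP}(\phi)$ is invertible sectorial, $\{e^{-sB_{0}}\}_{s\ge0}$ is a bounded analytic semigroup, so the first term is bounded by $C_{1}\|u_{0}\|_{\mathcal{D}(A_{0}^{\frac{3+\nu}{2}})}$ and the last by $C_{1}\int_{0}^{t}\psi(r)\,dr$. For the middle term, write $v(r)=u^{3}(r)-u(r)$ and factor $B_{0}^{\frac{3+\nu}{4}}\underline{\Delta}_{0}=(\underline{\Delta}_{0}B_{0}^{-1/2})B_{0}^{\frac{5+\nu}{4}}$, where $\underline{\Delta}_{0}B_{0}^{-1/2}$ is bounded because $\mathcal{D}(B_{0}^{1/2})=\mathcal{D}(A_{0})=\mathcal{D}(\underline{\Delta}_{0})$ by Lemma \ref{bipcompare}; then, using $\mathcal{D}(A_{0}^{\frac{5+\nu}{6}})=\mathcal{D}(B_{0}^{\frac{5+\nu}{12}})$ (Lemma \ref{bipcompare}) together with Lemma \ref{firstest} and the a priori bound,
\[
\|B_{0}^{\frac{5+\nu}{4}}e^{-(t-r)B_{0}}v(r)\|_{\mathcal{H}_{2}^{0,\gamma}(\mathbb{B})}\le C(t-r)^{-\frac{5+\nu}{6}}\|v(r)\|_{\mathcal{D}(A_{0}^{\frac{5+\nu}{6}})}\le C_{2}(t-r)^{-\frac{5+\nu}{6}}\psi(r).
\]
Collecting the three contributions gives, for $t\in[0,T]$,
\[
\psi(t)\le C_{1}\|u_{0}\|_{\mathcal{D}(A_{0}^{\frac{3+\nu}{2}})}+C_{3}\int_{0}^{t}\big((t-r)^{-\frac{5+\nu}{6}}+1\big)\psi(r)\,dr .
\]
Since $\nu<1$, the singular exponent $\frac{5+\nu}{6}$ is $<1$, the kernel is integrable and $\psi$ is bounded on $[0,T]$, so the singular Gr\"onwall inequality yields $\psi(t)\le\widetilde C\,\|u_{0}\|_{\mathcal{D}(A_{0}^{\frac{3+\nu}{2}})}$ on $[0,T]$ with $\widetilde C$ depending only on $c$, $T$, $\underline{\Delta}_{0}$ and the two $\mathcal{H}_{2}^{0,0}(\mathbb{B})$-norms of $u_{0}$ entering $C$, which is the claim.

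The main obstacle is the bookkeeping of the fractional powers: one must check that pulling $\underline{\Delta}_{0}$ out of the semigroup through the bounded operator $\underline{\Delta}_{0}B_{0}^{-1/2}$ costs exactly half a power of $B_{0}$, so that the Duhamel weight comes out as $(t-r)^{-\frac{5+\nu}{6}}$ — integrable precisely in the admissible range $\nu\in(0,1)$ — and that the $A_{0}$- and $B_{0}$-scales match up via Lemma \ref{bipcompare} at the exponents $\frac{3+\nu}{4}$ and $\frac{5+\nu}{12}$, together with the verification that \eqref{u0reg} and $q>\frac{4}{1-\nu}$ indeed place $u_{0}$ and $u$ in $\mathcal{D}(A_{0}^{\frac{3+\nu}{2}})$. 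The second, more structural point is that the argument returns a bound that is \emph{linear} in $\|u_{0}\|_{\mathcal{D}(A_{0}^{\frac{3+\nu}{2}})}$ only because Lemma \ref{firstest} isolates the possibly large factor $1+\|u\|_{\mathcal{D}(A_{0}^{1/2})}^{2}$, which the gradient estimate of Proposition \ref{gradest} controls a priori on all of $[0,T]$; without that estimate the Gr\"onwall step would give only short-time control.
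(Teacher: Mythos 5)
Your proof is correct and follows essentially the same route as the paper: the embedding $(\mathcal{D}(\underline{\Delta}_{0}^{2}),\mathcal{H}_{2}^{0,\gamma}(\mathbb{B}))_{\frac1q,q}\hookrightarrow\mathcal{D}(B_{0}^{\frac{3+\nu}{4}})=\mathcal{D}(A_{0}^{\frac{3+\nu}{2}})$ from $q>\frac{4}{1-\nu}$, a Duhamel representation for $e^{-tB_{0}}$, the fractional-power bookkeeping producing the kernel $(t-r)^{-\frac{5+\nu}{6}}$, Lemma \ref{firstest} fed by the gradient estimate and Lemma \ref{L56}.1, and singular Gr\"onwall. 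The only (harmless) deviation is that the paper substitutes $w=e^{-ct}u$ to absorb the zero-order shift, whereas you keep the extra term $c\int_{0}^{t}e^{-(t-r)B_{0}}u\,dr$ and bound it inside the Gr\"onwall kernel.
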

We note that under the assumptions of Proposition \ref{uuk}, we have $\frac{5}{3}>\frac{n+1}{p}$. Therefore we are also under the conditions of Lemma \ref{firstest}.

\begin{proof}
First we note that, under the above assumptions we have 
\begin{gather}\label{embedinttodom}
(X_{2}^{0},X_{0}^{0})_{\frac{1}{q},q}\hookrightarrow\mathcal{D}\big(A_{0}^{\frac{3+\nu}{2}}\big).
\end{gather}
In fact, by $q>\frac{4}{1-\nu}$, \cite[(I.2.5.4), (I.2.5.2), (I.2.9.8)]{Am} and Lemma \ref{bipcompare} we have that 
$$
(X_{2}^{0},X_{0}^{0})_{\frac{1}{q},q}\hookrightarrow(X_{2}^{0},X_{0}^{0})_{\frac{1-\nu}{4}-\varepsilon,q}\hookrightarrow(X_{0}^{0},X_{2}^{0})_{\frac{3+\nu}{4}+\varepsilon,q}\hookrightarrow[X_{0}^{0},X_{2}^{0}]_{\frac{3+\nu}{4}}=\mathcal{D}\big(B_{0}^{\frac{3+\nu}{4}}\big)=\mathcal{D}\big(A_{0}^{\frac{3+\nu}{2}}\big),
$$
for all sufficiently small $\varepsilon>0$.

By Remark \ref{embedtoC}, we deduce that
$$
u\in C([0,T];(X_{2}^{0},X_{0}^{0})_{\frac{1}{q},q})\hookrightarrow C\big([0,T];\mathcal{D}\big(A_{0}^{\frac{3+\nu}{2}}\big)\big).
$$

Remark \ref{Bancalg} implies then that 
$$
u,u^{3}\in C([0,T];\mathcal{D}(A_{0}))
$$
and therefore 
$$
\Delta(u^{3}-u)\in C([0,T];X_{0}^{0}).
$$

Consider the linear parabolic problem 
\begin{eqnarray}
w'(t)+(\Delta^{2}+c)w(t) & = & e^{-ct}\Delta(u^{3}(t)-u(t)),\quad t\in(0,T),\label{CH3}\\
w(0) & = & u_{0}.\label{CH4}
\end{eqnarray}
 By Theorem \ref{dorevenni}, Theorem \ref{thcl} and Proposition \ref{biuytpfohit},
the above equation has a unique solution 
$$
w\in W^{1,q}(0,T;\mathcal{H}_{2}^{0,\gamma}(\mathbb{B}))\cap L^{q}(0,T;\mathcal{D}(\underline{\Delta}_{0}^{2})).
$$
 Moreover, by \eqref{uexists}, $e^{-ct}u$ is a solution
of \eqref{CH3}-\eqref{CH4} in the
above space. Therefore, $u=e^{ct}w$ and by the variation of constants
formula, see e.g. \cite[Proposition 3.7.22]{ABHN},
we have that 
\begin{gather}
u(t)=e^{ct}e^{-tB_{0}}u_{0}+e^{ct}\int_{0}^{t}e^{(\tau-t)B_{0}}e^{-c\tau}(c-A_{0})(u^{3}(\tau)-u(\tau))d\tau,\quad t\in[0,T].\label{varconst}
\end{gather}

By Lemma \ref{bipcompare} we infer that, for $\sigma\in[0,1]$,
$$
A_{0}^{\sigma}B_{0}^{-\frac{\sigma}{2}},\,B_{0}^{\frac{\sigma}{2}}A_{0}^{-\sigma}\in\mathcal{L}(\mathcal{H}_{2}^{0,\gamma}(\mathbb{B}))\quad\text{and}\quad B_{0}^{\frac{\nu-1}{12}}A_{0}^{\frac{1-\nu}{6}}=A_{0}^{\frac{1-\nu}{6}}B_{0}^{\frac{\nu-1}{12}}\quad\text{in}\quad\mathcal{D}\big(A_{0}^{\frac{1-\nu}{6}}\big)
$$
 Moreover, by \cite[Proposition 2.9]{Ro1} we
have that 
\begin{gather*}
\Vert B_{0}^{\frac{5+\nu}{6}}e^{-tB_{0}}\Vert _{\mathcal{L}(X_{0}^{0})}\leq C_{1}t^{-\frac{5+\nu}{6}},\quad t>0,
\end{gather*}
 for certain constant $C_{1}>0$ depending only on $B_{0}$. Hence,
for $0<\tau<t<T$ by \cite[Lemma III.4.9.1 (iii)]{Am},
Lemma \ref{firstest} and Lemma \ref{bipcompare}
we estimate
\begin{eqnarray}
 \lefteqn{ \Vert A_{0}^{\frac{3+\nu}{2}}e^{(\tau-t)B_{0}}(c-A_{0})(u^{3}(\tau)-u(\tau))\Vert _{X_{0}^{0}}}\nonumber \\
 & = & \Vert A_{0}^{\frac{3+\nu}{2}}B_{0}^{-\frac{3+\nu}{4}}B_{0}^{\frac{5+\nu}{6}}e^{(\tau-t)B_{0}}A_{0}^{\frac{1-\nu}{6}}B_{0}^{\frac{\nu-1}{12}}(c-A_{0})A_{0}^{-1}A_{0}^{\frac{5+\nu}{6}}(u^{3}(\tau)-u(\tau))\Vert _{X_{0}^{0}}\nonumber \\
 & \leq & \Vert A_{0}^{\frac{3+\nu}{2}}B_{0}^{-\frac{3+\nu}{4}}\Vert _{\mathcal{L}(X_{0}^{0})}\Vert B_{0}^{\frac{5+\nu}{6}}e^{(\tau-t)B_{0}}\Vert _{\mathcal{L}(X_{0}^{0})}\nonumber \\
 & & \times\Vert A_{0}^{\frac{1-\nu}{6}}B_{0}^{\frac{\nu-1}{12}}\Vert _{\mathcal{L}(X_{0}^{0})}\Vert (c-A_{0})A_{0}^{-1}\Vert _{\mathcal{L}(X_{0}^{0})}\Vert A_{0}^{\frac{5+\nu}{6}}(u^{3}(\tau)-u(\tau))\Vert _{X_{0}^{0}}\nonumber \\
 & \leq & C_{2}(t-\tau)^{-\frac{5+\nu}{6}}(1+\Vert u\Vert _{\mathcal{D}\big(A_{0}^{\frac{1}{2}}\big)}^{2})\Vert u\Vert _{\mathcal{D}\big(A_{0}^{\frac{3+\nu}{2}}\big)},\label{deuay}
\end{eqnarray}
 where we have used Lemma \ref{firstest} in the last
inequality and $C_{2}>0$ depends only on $c$ and $\underline{\Delta}_{0}$.
We infer, by \cite[Proposition 1.1.7]{ABHN},
that 
$$
\int_{0}^{t}e^{(\tau-t)B_{0}}e^{-c\tau}(c-A_{0})(u^{3}(\tau)-u(\tau))d\tau\in\mathcal{D}\big(A_{0}^{\frac{3+\nu}{2}}\big),\quad t\in[0,T],
$$
and 
\begin{eqnarray}
\lefteqn{A_{0}^{\frac{3+\nu}{2}}\int_{0}^{t}e^{(\tau-t)B_{0}}e^{-c\tau}(c-A_{0})(u^{3}(\tau)-u(\tau))d\tau}\nonumber\\
&=&\int_{0}^{t}A_{0}^{\frac{3+\nu}{2}}e^{(\tau-t)B_{0}}e^{-c\tau}(c-A_{0})(u^{3}(\tau)-u(\tau))d\tau, \quad t\in[0,T].\nonumber
\end{eqnarray}
Moreover, if $t\in[0,T]$, by \eqref{varconst} and \eqref{deuay} we find that 
\begin{eqnarray}
\lefteqn{ \Vert u(t)\Vert _{\mathcal{D}\big(A_{0}^{\frac{3+\nu}{2}}\big)}}\nonumber \\
 & \leq & e^{cT}\Vert A_{0}^{\frac{3+\nu}{2}}B_{0}^{-\frac{3+\nu}{4}}\Vert _{\mathcal{L}(X_{0}^{0})}\Vert e^{-tB_{0}}\Vert _{\mathcal{L}(X_{0}^{0})}\Vert B_{0}^{\frac{3+\nu}{4}}A_{0}^{-\frac{3+\nu}{2}}\Vert _{\mathcal{L}(X_{0}^{0})}\Vert A_{0}^{\frac{3+\nu}{2}}u_{0}\Vert _{X_{0}^{0}}\nonumber \\
 & & +e^{cT}C_{2}\int_{0}^{t}(t-\tau)^{-\frac{5+\nu}{6}}(1+\Vert u\Vert _{\mathcal{D}\big(A_{0}^{\frac{1}{2}}\big)}^{2})\Vert u\Vert _{\mathcal{D}\big(A_{0}^{\frac{3+\nu}{2}}\big)}d\tau\nonumber \\
 & \leq & C_{3}e^{cT}\Big(\Vert u_{0}\Vert _{\mathcal{D}\big(A_{0}^{\frac{3+\nu}{2}}\big)}+\int_{0}^{t}(t-\tau)^{-\frac{5+\nu}{6}}(1+\Vert u\Vert _{\mathcal{D}\big(A_{0}^{\frac{1}{2}}\big)}^{2})\Vert u\Vert _{\mathcal{D}\big(A_{0}^{\frac{3+\nu}{2}}\big)}d\tau\Big)\label{gokserr}
\end{eqnarray}
 for some constant $C_{3}>0$ depending only on $c$ and $\underline{\Delta}_{0}$,
where we have used Lemma \ref{bipcompare} and the
uniform boundedness in $t\in[0,T]$ of $\Vert e^{-tB_{0}}\Vert _{\mathcal{L}(X_{0}^{0})}$, see e.g. \cite[Proposition 2.9]{Ro1}. 

By Lemma \ref{L56} (i) and Proposition \ref{gradest}, there exist some constants $C_{4},C>0$, where $C$ depends only on $\Vert \sqrt{\langle\nabla u_{0},\nabla u_{0}\rangle_{g}}\Vert _{\mathcal{H}_{2}^{0,0}(\mathbb{B})}$ and $\Vert u_{0}^{2}-1\Vert _{\mathcal{H}_{2}^{0,0}(\mathbb{B})}$, such that
\begin{gather}\label{eq:D}
\Vert u(t)\Vert_{\mathcal{D}\big(A_{0}^{\frac{1}{2}}\big)}\le C_{4}\Vert u(t)\Vert_{\mathcal{H}_{2}^{1,\gamma+1}(\mathbb{B})}\le C.
\end{gather}
Here we have used that $\gamma<-\frac{1}{2}$ and $\gamma<-\frac{1}{4}$ for $n=1$ and $n=2$, respectively.
The result now follows by \eqref{gokserr}, \eqref{eq:D}
and singular Gr\"onwall lemma \cite[Lemma 8.1.1]{CH} or \cite[Theorem II.3.3.1]{Am}.
\end{proof}

\subsubsection*{Proof of Theorem \ref{ThLTS}, second part (long time existence)} We now prove that the solutions are globally defined when $\dim(\mathbb{B})\in\{ 2,3\}$, $\gamma<-\frac{1}{2}$ and $\gamma<-\frac{1}{4}$ for $n=1$ and $n=2$, respectively, $\gamma$ satisfies the conditions of \eqref{gamma}, $s\ge0$ and $s+2>\frac{n+1}{p}$, where $p\in(1,\infty)$.

By Corollary \ref{indmaxint} it is enough to prove it for $s=0$, $p=2$, $q>\frac{4}{1-\nu}$, where $\nu$ is as in Lemma \ref{firstest}, and $\gamma$ satisfying the conditions just mentioned. In this situation, we can find a solution $u\in W^{1,q}(0,T;\mathcal{H}_{2}^{0,\gamma}(\mathbb{B}))\cap L^{q}(0,T;\mathcal{D}(\underline{\Delta}_{0}^{2}))$ of the Cahn-Hilliard equation \eqref{CH1}-\eqref{CH2} by \eqref{uexists}, as, for $s=0$, $n\in\{ 1,2\} $ and $p=2$, we have $s+2>\frac{n+1}{p}$.

Suppose that the maximal interval of existence $[0,T_{\max})$ of the solution is finite. Then
\begin{eqnarray*}
 \lefteqn{ \Vert F(u(t))\Vert _{\mathcal{H}_{2}^{0,\gamma}(\mathbb{B})}}\\
 & = & \Vert \underline{\Delta}_{0}(u(t)^{3}-u(t))\Vert _{\mathcal{H}_{2}^{0,\gamma}(\mathbb{B})}\le\Vert u(t)^{3}-u(t)\Vert _{\mathcal{D}(\underline{\Delta}_{0})}\\
 & \le & C_{1}(\Vert u(t)\Vert _{\mathcal{D}(\underline{\Delta}_{0})}^{3}+\Vert u(t)\Vert _{\mathcal{D}(\underline{\Delta}_{0})})\le C_{2}\Big(\Vert u(t)\Vert _{\mathcal{D}\big(A_{0}^{\frac{3+\nu}{2}}\big)}^{3}+\Vert u(t)\Vert _{\mathcal{D}\big(A_{0}^{\frac{3+\nu}{2}}\big)}\Big)\\
 & \le & C_{3}\Big(\Vert u(0)\Vert _{\mathcal{D}\big(A_{0}^{\frac{3+\nu}{2}}\big)}^{3}+\Vert u(0)\Vert _{\mathcal{D}\big(A_{0}^{\frac{3+\nu}{2}}\big)}\Big),
\end{eqnarray*}
for certain $C_{1},C_{2},C_{3}>0$. Here we used the fact that $\mathcal{D}(\underline{\Delta}_{0})$ is a Banach algebra by Remark \ref{Bancalg} and Proposition \ref{uuk} in the last inequality.

We conclude that $\Vert F(u(t))\Vert _{L^{q}(0,T_{\max};\mathcal{H}_{2}^{0,\gamma}(\mathbb{B}))}<\infty$, which is a contradiction to Corollary \ref{maximalinterval}, recalling that $F(\cdot)$ is locally Lipschitz everywhere in $(X_{1}^{0},X_{0}^{0})_{\frac{1}{q},q}$. Hence $T_{\max}=\infty$. \mbox{\ } \hfill $\square$

\begin{remark}
The theorem proved by \cite[Theorem 5.2.1]{PS} actually gives even more information about the regularity of the solutions. In fact, the same theorem implies that
$$
t^{k}\partial_{t}^{k}u\in W^{1,q}(0,T;\mathcal{H}_{p}^{s,\gamma}(\mathbb{B}))\cap L^{q}(0,T;\mathcal{D}(\underline{\Delta}_{s}^{2})), \quad \forall k\in\mathbb{N}. 
$$
Moreover, with precisely the same arguments, we can show that the solution u given by Theorem \ref{ThLTS} belongs to $\bigcap_{s\ge0}C^{\omega}((0,\infty);\mathcal{D}(\underline{\Delta}_{s,p}^{2}))$, where $C^{\omega}$ denotes the set of analytic functions. We just need to note that the function F from Theorem \ref{abstractshortsmooth} also satisfies 
$$
\ensuremath{F|_{(Y_{1}^{k},Y_{0}^{k})_{\frac{1}{q},q}}\in C^{\omega}((Y_{1}^{k},Y_{0}^{k})_{\frac{1}{q},q};Y_{0}^{k})}, \quad k\in\mathbb{N}, 
$$
and repeat exactly the same proofs using \cite[Theorem 5.2.1]{PS} for the analytic case.
\end{remark}

\section{Acknowledgment}
Pedro T. P. Lopes would like to thank Elmar Schrohe for fruitful discussions and the Institut f\"ur Analysis at Leibniz Universit\"at Hannover for the very warm hospitality of the faculty and staff during the visits that allowed him to do this work.

\end{document}